\newcommand{\Addresses}{%
\par\bigskip\noindent
\textsc{Ryan Blair}\\
Department of Mathematics, California State University Long Beach, Long Beach, CA 90840, USA\\
Email: \href{mailto:ryan.blair@csulb.edu}{ryan.blair@csulb.edu}

\par\medskip\noindent
\textsc{Puttipong Pongtanapaisan}\\
Mathematics Field Group, Pitzer College, Claremont, CA 91711, USA\\
Email: \href{mailto:puttip@pitzer.edu}{puttip@pitzer.edu}

\par\medskip\noindent
\textsc{Christine E. Soteros}\\
Department of Mathematics and Statistics, University of Saskatchewan, Saskatoon, Canada\\
Email: \href{mailto:soteros@math.usask.ca}{soteros@math.usask.ca}
\par\bigskip
}
\declaretheorem[name=Theorem]{thm}
\newtheorem{theorem}{Theorem}
\newtheorem{lemma}[theorem]{Lemma}
\newtheorem{prop}[theorem]{Proposition}
\theoremstyle{definition}
\newtheorem{example}{Example}
\newtheorem{definition}{Definition}
\newtheorem{remark}{Remark}
\newcommand{\tube}{\mathbb{T}}
\date{}
\title{Entanglement complexity of spanning pairs of lattice polygons}
\author{Ryan Blair\footnote{Email: \href{mailto:ryan.blair@csulb.edu}{ryan.blair@csulb.edu}} \;\ Puttipong Pongtanapaisan\footnote{Email: \href{mailto:puttip@pitzer.edu }{puttip@pitzer.edu}} \;\ Christine E. Soteros \footnote{Email: \href{mailto:soteros@math.usask.ca}{soteros@math.usask.ca}}}
\begin{document}
\maketitle

\begin{abstract} 
We study the entanglement complexity of a system consisting of two simple-closed curves (self-avoiding polygons) that span a lattice tube, referred to as a 2SAP. 2SAPs are of interest as the first known model of confined ring polymers where the linking probability goes to 1 exponentially with the size of the system.  Atapour et al proved this in 2010 by showing that all but exponentially few sufficiently large 2SAPs contain a pattern that guarantees the 2SAP is non-split, provided that the requisite pattern fits in the tube. This result was recently extended to all tubes sizes that admit non-trivial links. Here we develop and apply knot theory results to answer more general questions about the entanglement complexity of 2SAPs. 

We first extend the 1992 concept of a good measure of knot complexity to a good measure, $F$, of spanning-link complexity for $k$-component links. Using tangle products, we show, for example, that the more complex the prime knot decomposition of any component of a given link type, the greater its $F$-measure. We then prove that all but exponentially few size $m$ 2SAPs have $F$ complexity that grows at least linearly in $m$ as $m\to \infty$. We establish that good measures of knot complexity yield good measures of spanning-link complexity.
We also establish conditions whereby more general link invariants can yield good measures. In particular,
we establish that measures based on several classical invariants are good measures by our definition, eg bridge number, the number of $p$-colourings or splitting number.
 
Finally, we consider how the tube dimensions affect which links are embeddable as 2SAPs as well as geometric restrictions on the entanglement complexity of the embeddings. 
For example, we establish that there are two-component links that occur as 2SAPs in a given tube size only when one of the components is forced into a non-minimal bridge number conformation.

\end{abstract}

\parbox{5.5in} {\textsc{Keywords:} 

\smallskip
knot complexity, lattice polygons, link complexity, polymer confinement, nanochannel, tangle product, bridge number, splitting number, trunk}

\section{Introduction}
Single self-avoiding polygons (SAPs) on the simple cubic lattice ($\mathbb{Z}^3$) have been used to
model and study the self-entanglement of closed curves and ring polymers for over 35 years.
In particular, the Frisch-Wasserman-Delbruck (FWD) conjecture \cite{delbruck1962mathematical,frisch1961chemical} that ring polymers are more likely to be  knotted  as polymer length increases, was first proved using a lattice polygon model \cite{sumners1988knots,Pip89}. For this, it was proved that all but exponentially few sufficiently long
lattice polygons are knotted \cite{sumners1988knots}, ie the probability of the unknot decreases exponentially to zero with polygon length (number of lattice edges). Similar questions
regarding the entanglement complexity of multiple closed curves have also been addressed with
lattice SAP models, where multiple closed curves are modelled by mutually avoiding
SAPs.

For pairs of mutually avoiding cubic lattice SAPs with total length $n$ (sum of the lengths
of each SAP), if the two SAPs are constrained to have a pair of edges (one from each SAP)
within a fixed distance apart, then the exponential growth rate (with respect to $n$)
of non-split (linked) SAP pairs is equal to that of split (unlinked) SAP pairs \cite{orlandini1994random}. That is, unlike the single SAP case with knotting, we cannot say that all but exponentially few SAP pairs are linked,
even with a fixed distance constraint. Note that it is still possible that the linking probability
goes to one, but it will not do so exponentially fast (with respect to $n$).
The same has been shown to hold when confining pairs of SAPs to sublattices of $\mathbb{Z}^3$ such as slabs or rectangular tubes \cite{tesi1998topological}.  The $M$-slab is the sublattice of $\mathbb{Z}^3$ bounded by the two parallel $z$-planes $z=0$ and $z=M$ and the $N\times M$ tube or $(N,M)$-tube is the sublattice of the $M$-slab bounded by the two parallel $y$-planes $y=0$ and $y=N$. 
Thus even when confined to a lattice tube, the exponential growth rates for linked and unlinked SAP pairs are  equal \cite{tesi1998topological} (again assuming a pair of edges (one from each SAP) are close). However, if a pair of SAPs in the tube
are forced to both extend over the same volume (ie have the same $x$-span - see Figure \ref{fig:isa2sap}) then all but exponentially few sufficiently large spanning pairs of SAPs (called 2SAPs) in
the $N \times M$ tube are linked. This was proved first for tubes with $\min\{M,N\}\geq 1$ and $N + M \geq 4$ \cite{Atapour10} and was  recently extended to $\min\{M,N\}\geq 1$ and $N + M \geq 3$, ie to all tube sizes that admit non-trivial links \cite{Inprep2025}. Thus, as one might expect, if all edges of each polygon are forced to be close to the edges of the other polygon, non-trivial linking becomes highly probable.  Establishing these results for 2SAPs  required establishing and applying non-trivial results from knot theory.

\begin{figure}
\centering
\begin{subfigure}{0.4\textwidth}
    \includegraphics[width=\textwidth]{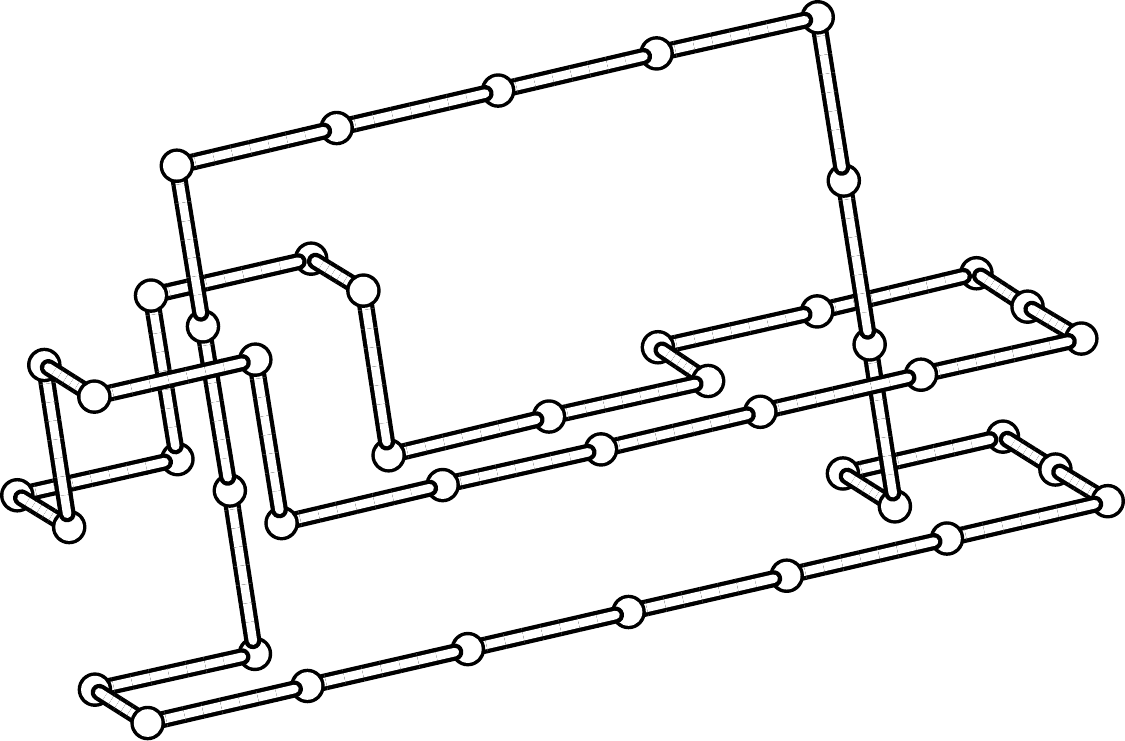}
    \caption{}
    \label{fig:isa2sap}
\end{subfigure}
\hfill
\begin{subfigure}{0.4\textwidth}
    \includegraphics[width=\textwidth]{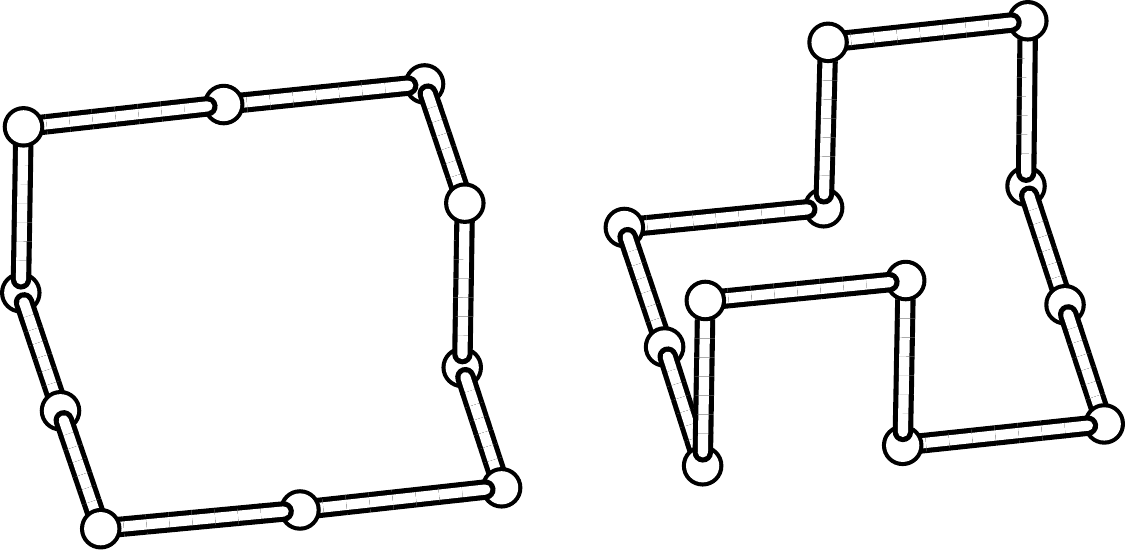}
    \caption{}
    \label{fig:nota2sap}
\end{subfigure}
\caption{(a) An embedding of a 2-component unlink in a $(2\times 3)$ lattice tube that is a 2SAP. In this case, the span of the 2SAP is 6. (b) An embedding of a 2-component unlink in a $(2\times 1)$ lattice tube that is not a 2SAP.}
\label{fig:figures}
\end{figure}

In this paper, we explore the entanglement complexity of  2SAPs further by developing and applying novel topological theorems and approaches.  We build on the concept of ``good measures of knot complexity"  defined   with respect to the connected sum operation \cite{SSW92} by introducing here ``good measures of spanning-link complexity" with respect to a concatenation operation defined  for two $k$-component links  as in Figure \ref{fig:tangledconcat} for $k=2$. 

In general, good measures for knot complexity  increase  with the complexity of the prime knot decomposition of the knot.  
Related to this, here we will use tangle products and  define good measures for spanning-link complexity to be ones that, for example, increase with the complexity of the prime knot decomposition of the components of the link. We establish that  good measures of knot complexity yield good measures of spanning-link complexity  and also establish conditions whereby more general link invariants can yield good measures. In particular,
we establish that measures based on several classical link invariants, eg  bridge number, the number of $p$-colorings and splitting number,  are good measures by our definition. {Combining these knot theory results with lattice combinatorics arguments, allows us to establish that all but exponentially few sufficiently large 2SAPs have high entanglement complexity, provided $\min\{M,N\}\geq 1$ and $N+M\geq 4$. 
Note that this does not include the $2\times 1$ tube since all 2SAPs in that tube are prime \cite{Inprep2025}.

\begin{figure}
\centering
\begin{subfigure}{0.4\textwidth}
    \includegraphics[width=\textwidth]{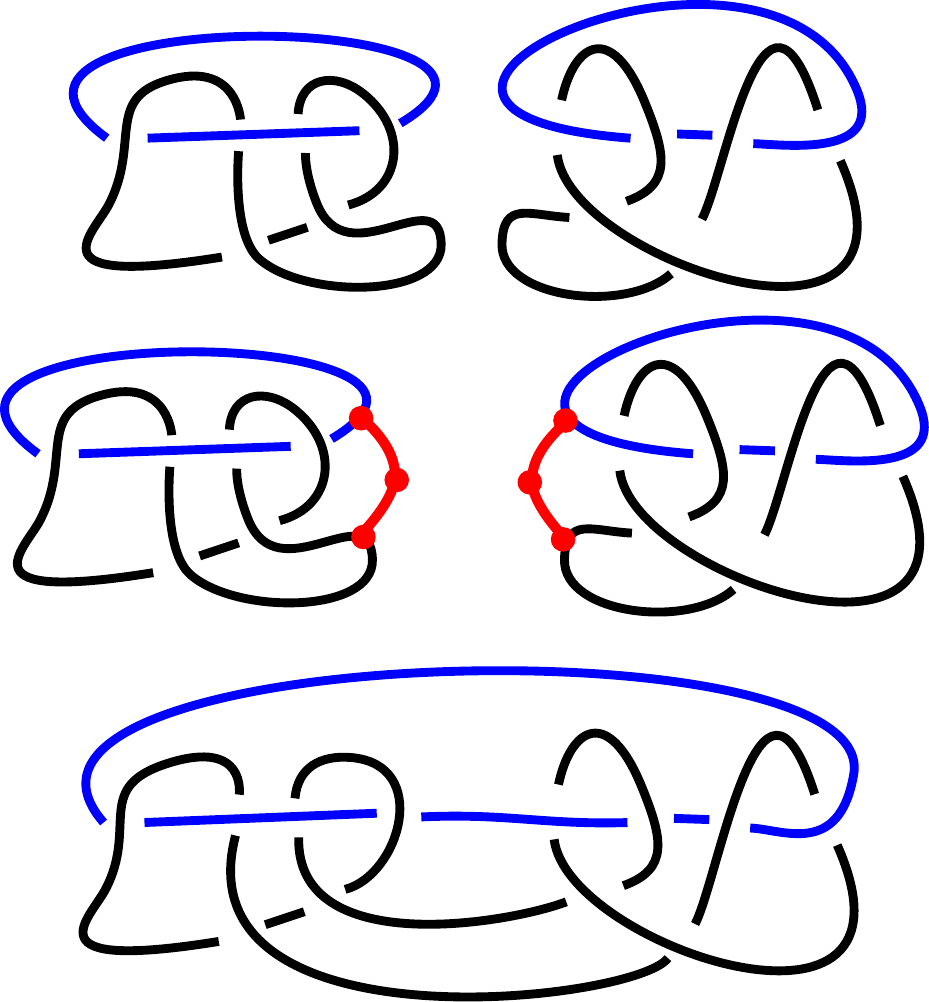}
    \caption{}
    \label{fig:tangledconcat}
\end{subfigure}
\hfill
\begin{subfigure}{0.4\textwidth}
    \includegraphics[width=\textwidth]{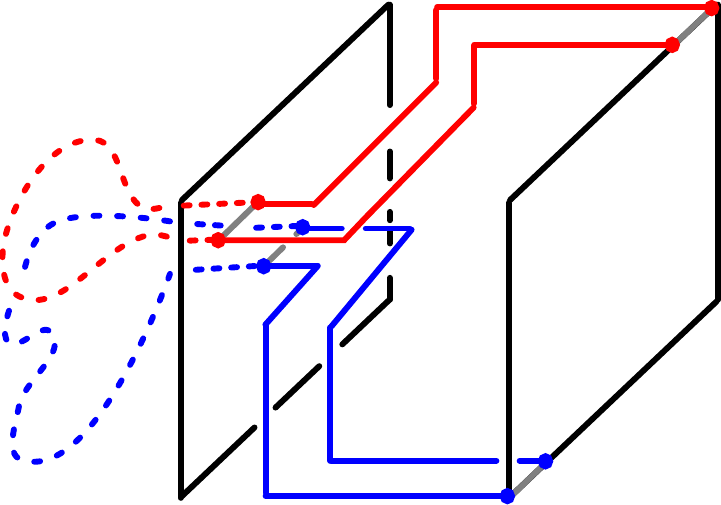}
    \caption{}
    \label{fig:kequals2}
\end{subfigure}        
\caption{(a) Two 2-component links (top) are concatenated together by component-wise connected sum operations  to create a new link (bottom). This concatenation operation is equivalent to a 2-strand tangle product operation (middle) as in Definition \ref{def:product}. This example also illustrates that  concatenating two unlinks can yield a non-split link. (b) Half of the concatenation algorithm described in Definition \ref{def:concat} for concatenating $k$SAPs, $k=2$: dashed lines represent a 2SAP $\theta_1$ and solid lines indicate half the edges added to concatenate to any other 2SAP. }
\label{fig:figures2}
\end{figure}

Another direction where knot theory is important for analyzing the complexity of lattice knots/links, is with regards to determining which knots/links are embeddable in a given sublattice and the minimum number of steps needed \cite{SSW92,ishihara2017bounds,Ishihara_2012,rechnitzer_rensburg,KIM2025109353,Scharein_2009}.  For polygons in the simple cubic lattice all knots are embeddable \cite{SSW92} and there are proofs regarding the minimal number of steps needed to create some knots \cite{Scharein_2009}.  Algorithms have also been used to obtain upper bounds for these minimal step numbers for knots and links in various lattices and for slab and tube sublattices of the simple cubic lattice \cite{rechnitzer_rensburg,ishihara2017bounds,Ishihara_2012}.  
In \cite{ishihara2017bounds}, the authors used the complexity called the trunk of knots \cite{ozawa2010waist} to completely characterize the knots and links that fit inside an $N\times M$ tube.

Also for tubes, in \cite{beaton2018characterising} the authors  characterized different 
conformations or ``modes" of a given prime knot pattern (2-string tangle) as either local (aka 1-filament) or non-local (aka 2-filament) depending, respectively, on whether or not the link-type of the numerator closure still contains the knot (see \cite[Figure 3]{beaton2018characterising} and also \cite{Suma17}). It was also shown in \cite{beaton2018characterising} that for small tube sizes, the non-local knot patterns are more likely than the local ones.

Here we introduce the concept of equal-height trunk to establish which 2-component links can occur as a 2SAP in an $N\times M$ tube. Further we explore the likelihood of different ``modes" of linking for 2SAPs.  For example, for a prime non-split link $L$ having each component an unknot, if it can occur as a 2SAP in a given tube, we consider whether it is possible to minimize the number of maxima in one component of the 2SAP.  The latter can have applications to studying the translocation of linked DNA through a nanopore \cite{COW17, COW20, RK23}. In particular, 
simplifying the  conformation of one of the unknotted components may prevent the translocation of the entire link, even through pores that would otherwise admit the linked DNA.

The main theorems proved in this paper are stated below. Full details of proofs are given in subsequent sections.

Using tangle products, we define good measures of spanning link complexity in Definition \ref{def:goodmeasure} of Section \ref{sec:linkGoodMeasures}. Roughly speaking, good measures of $k$-component spanning link complexity increase linearly with the number of occurrences of a given tangle (specified by what we will call a $k$-tangle triple) in a particular tangle decomposition, one which includes the concatenation operation defined for 2SAPs. 
We then establish the following general ways for obtaining good measures.  
Let $\mathcal{L}^k$  be the set of smooth ambient isotopy classes of $k$-component links.

\begin{restatable}{thm}{LinkMeasureToGoodMeasure} \label{thm:LinkMeasureToGoodMeasure}
 Suppose that $F:\bigcup_{i=1}^{\infty} \mathcal{L}^i\rightarrow [0,\infty)$ is a function defined on links of any number of components. Additionally, suppose that the following hold:
 
 \begin{enumerate}
\item if $U$ is an unlink, $F(U)=0$,
\item if a link $L=L'\cup U$ is a split link with splitting sphere separating $L'$ from $U$, then $F(L)=F(L')$,
\item the restriction $F:\mathcal{L}^1\rightarrow [0,\infty)$ is a good measure of knot complexity, and
\item for any knot $K$ and for any link $L$ with component knot $K$, $F(L)\geq F(K)$.
\end{enumerate}
 
Then the restriction 
$F:\mathcal{L}^k\rightarrow [0,\infty)$ is a good measure of $k$-component spanning link complexity for any $k\geq 1$.
\end{restatable}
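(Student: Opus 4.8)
The plan is to check, clause by clause, that the restriction $F|_{\mathcal{L}^k}$ meets Definition \ref{def:goodmeasure}, concentrating the real work in producing a witnessing pattern link together with a linear lower bound. First I would unpack condition~3: since $F|_{\mathcal{L}^1}$ is a good measure of knot complexity, there are a prime knot $\kappa$ and a constant $c>0$ such that $F(K)\ge cn$ whenever the prime decomposition of a knot $K$ contains at least $n$ summands ambient isotopic to $\kappa$. I would then take as the pattern the $k$-component link $\lambda := \kappa\sqcup U_{k-1}$, where $U_{k-1}$ is a $(k-1)$-component unlink split off from $\kappa$. (If the definition requires the pattern to be non-split, or realizable as a 2SAP, replace $\lambda$ by any non-split $k$-component link one of whose components is $\kappa$ --- realize $\kappa$ in the tube and thread $k-1$ unknotted spanning loops around it; the argument below only uses that $\kappa$ occurs as a component.)

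The crux is the behaviour of the concatenation operation of Definition \ref{def:concat} on individual components. As recorded in Figure \ref{fig:tangledconcat}, concatenating two $k$-component links performs a connected sum of the $i$-th component of one with the $i$-th component of the other, for every $i$; what the interface strands do to each other can change the \emph{link} type --- indeed it can turn a split link non-split --- but it does not change the ambient isotopy class of any single component knot, since each component, read on its own, is just an iterated band (connected) sum of the corresponding factor components. Consequently, if $L\in\mathcal{L}^k$ is assembled by concatenating at least $n$ copies of $\lambda$ (in any order, and possibly together with arbitrarily many further link patterns), then the first component $K_1$ of $L$ is a connected sum $\kappa\#\cdots\#\kappa\#(\text{rest})$ with at least $n$ factors equal to $\kappa$; as $\kappa$ is prime, uniqueness of prime factorization (Schubert) shows the prime decomposition of $K_1$ contains at least $n$ summands ambient isotopic to $\kappa$.

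Now condition~4, applied to the component $K_1$, gives $F(L)\ge F(K_1)$, and the choice of $\kappa$ and $c$ gives $F(K_1)\ge cn$; hence $F(L)\ge cn$, which is the required linear-growth clause of Definition \ref{def:goodmeasure} for every $k\ge 1$ (for $k=1$ this is condition~3 verbatim, with $U_0$ empty). The remaining requirements are immediate: $F$ is an ambient isotopy invariant by hypothesis; it vanishes on the $k$-component unlink by condition~1; and it is insensitive to split-off unlinked components by condition~2 (in particular $F(\lambda)=F(\kappa)$, so the pattern behaves as advertised). Assembling these verifies Definition \ref{def:goodmeasure}, proving the theorem.

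The one step that is not pure bookkeeping is the claim in the second paragraph: that the $k$SAP concatenation of Definition \ref{def:concat}, while it genuinely entangles the components of the assembled link, still presents each component as the honest connected sum of the corresponding factor components. I expect this to be the main obstacle --- it should follow by sliding the interface crossings off along the connect-sum arcs and reading each component as an iterated band sum, but it has to be justified in the precise tangle-product language of Definition \ref{def:goodmeasure} rather than waved at; once it is in hand, everything else is Schubert's theorem together with conditions~1--4. A minor secondary point is to phrase the witnessing pattern $\lambda$ so that it literally satisfies whatever form the definition demands of the witnessing link, which is the reason for the parenthetical alternative construction above.
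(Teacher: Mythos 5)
Your overall route is the paper's: take the witness knot supplied by the knot-level good measure, use the split link $\lambda=\kappa\sqcup U_{k-1}$ (the paper's $L(\kappa)$) as the pattern, observe that the operation $\underline{*}$ acts by connected sum on components, and pass through condition 4 to a single component. But there is a genuine gap in your second paragraph: you assert that if $L$ is assembled from at least $n$ copies of $\lambda$, then \emph{the first component} of $L$ contains at least $n$ summands isotopic to $\kappa$. That is false in general. In each copy of $\lambda$ the knot $\kappa$ sits on one designated component, but $\underline{*}$ glues along an arbitrary homeomorphism between the boundaries of neighborhoods of the star graphs, so the induced matching of components between consecutive factors (and with the arbitrary interposed $J_i$) is an arbitrary bijection; the $n$ copies of $\kappa$ may therefore be distributed among all $k$ components of the final link. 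The correct statement is the pigeonhole one: \emph{some} component receives at least $\lceil n/k\rceil$ copies of $\kappa$. This is precisely why Definition \ref{def:goodmeasure} only demands the bound $\tfrac{m}{k}F(L)$ rather than $mF(L)$, and it is how the paper argues: some component $C_i$ has a connected sum decomposition with at least $\lceil m/k\rceil$ copies of $K$, condition 4 gives $F(M)\geq F(C_i)$, the knot-level good measure gives $F(C_i)\geq \tfrac{m}{k}F(K)$, and condition 2 converts $F(K)$ into $F(L(K))$. Your claimed conclusion $F(L)\geq cn$ is stronger than what the definition asks for and is not true of your construction.

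A secondary unjustified step: condition 3 only supplies a knot $K$ (not necessarily prime) with $F(\#_{i=1}^{n}K\# J)\geq nF(K)$ for all $J$; it does not supply a \emph{prime} $\kappa$ with $F(K')\geq cn$ whenever the prime decomposition of $K'$ contains $n$ summands equal to $\kappa$. Your detour through Schubert's theorem is therefore both unnecessary and not licensed by the hypothesis; the paper avoids it because the relevant component is literally exhibited in the form $\#_{i=1}^{\lceil m/k\rceil}K\#(\text{rest})$, to which the good-measure inequality applies verbatim. By contrast, the step you flag as the main obstacle is fine: the product sphere meets each component of $L^1\underline{*}L^2$ in exactly two points, and the rational-tangle condition on $\mu(G_i)$ guarantees that capping the two arcs off recovers the original component knots, so each component of the product is an honest connected sum of one component of $L^1$ with one component of $L^2$ --- the only thing you cannot control is \emph{which} one, and that is exactly where the pigeonhole is needed.
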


In Section \ref{sec:linkGoodMeasures}, we establish that taking $F$ equal to the bridge number minus $k$, crossing number, unlinking number, or braid index minus $k$ each satisfy criteria 1-4 above and hence are good measures.  

\begin{restatable}{corollary}{goodmeasure}
\label{cor:thm1}
   If $F_1:\mathcal{L}^1\rightarrow [0,\infty)$ is a good measure of knot complexity, then $F:\cup_{i=1}^\infty \mathcal{L}^i\rightarrow [0,\infty)$ given by $F(L)=max_{K\subset L} F_1(K)$ where the maximum is taken over all components $K$ of $L$, is a good measure of $k$-component spanning link complexity when restricted to $k$-component links. 
\end{restatable}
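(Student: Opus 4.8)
The plan is to show that the componentwise-maximum function $F(L)=\max_{K\subset L}F_1(K)$ satisfies the four hypotheses of Theorem \ref{thm:LinkMeasureToGoodMeasure} and then to apply that theorem directly. First I would note that $F$ is genuinely a link invariant: it depends only on the set of knot types of the components of $L$, which is an ambient isotopy invariant, and $F_1$ is itself a knot invariant. So the only content is the verification of hypotheses (1)--(4), each of which is an elementary statement about maxima.

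For (1): if $U$ is an unlink, every component is an unknot; a good measure of knot complexity vanishes on the unknot \cite{SSW92}, so $F(U)=0$. For (2): if $L=L'\cup U$ is split along a sphere separating $L'$ from an unlink $U$, then the components of $L$ are exactly those of $L'$ together with those of $U$, and the latter each contribute $F_1=0$; since $F(L')\ge 0$, taking the maximum gives $F(L)=\max\{F(L'),0\}=F(L')$. For (3): a $1$-component link is a knot $K$, and the maximum defining $F(K)$ is over the single component $K$, so the restriction $F|_{\mathcal{L}^1}$ equals $F_1$, which is a good measure of knot complexity by assumption. For (4): if a link $L$ has a component of knot type $K$, then $F_1(K)$ is one of the values over which the maximum defining $F(L)$ is taken, hence $F(L)\ge F_1(K)=F(K)$, the last equality by (3). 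With (1)--(4) verified, Theorem \ref{thm:LinkMeasureToGoodMeasure} yields that $F|_{\mathcal{L}^k}$ is a good measure of $k$-component spanning link complexity for every $k\ge 1$, which is the statement of the corollary.

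I do not expect a genuine obstacle here, since the statement is a formal corollary of Theorem \ref{thm:LinkMeasureToGoodMeasure}. The one point worth recording carefully is that the definition of a good measure of knot complexity from \cite{SSW92} forces $F_1$ to vanish on the unknot, as this is precisely what makes hypotheses (1) and (2) go through; once that is noted, the remainder is routine bookkeeping with the maximum over components.
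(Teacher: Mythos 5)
Your proposal is correct and follows essentially the same route as the paper: both verify hypotheses (1)--(4) of Theorem \ref{thm:LinkMeasureToGoodMeasure} for $F(L)=\max_{K\subset L}F_1(K)$ by the same elementary observations about maxima and then invoke that theorem. The one point you flag explicitly --- that a good measure of knot complexity vanishes on the unknot --- is also the fact the paper relies on for conditions (1) and (2), so there is no gap.
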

Thus from the arguments in \cite[Theorem 3.3]{SSW92}, taking the component-wise maximum of each of the following knot invariants gives a good measure of $k$-component spanning link complexity: number of prime
factors, genus, bridge number minus one, span of any non-trivial Laurent knot
polynomial, log(order), crossing number, unknotting number, minor index, braid
index minus one.

Beyond these results, we  show that some measures that do no satisfy the conditions above are still good measures. 

\begin{figure}
  \labellist
\small
\pinlabel{$K$} at 120 122
\endlabellist
\centering
\begin{subfigure}{0.4\textwidth}
    \includegraphics[width=\textwidth]{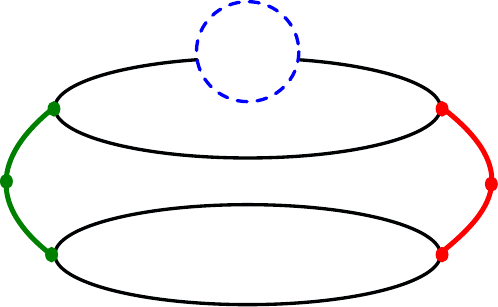}
    \caption{}
    \label{fig:L(K)separated}
\end{subfigure}
\hfill
\begin{subfigure}{0.4\textwidth}
    \includegraphics[width=\textwidth]{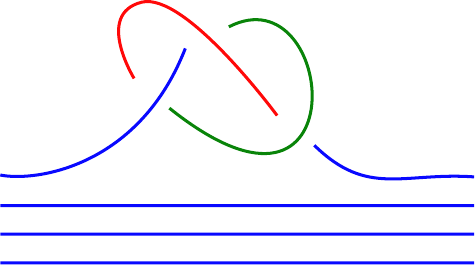}
    \caption{}
    \label{fig:tight}
\end{subfigure}
\caption{(a) The link $L(K)$ is the split union of $k$ knots (shown in black for $k=2$) where one is knot-type $K$ (localized in the dashed blue circle) and the others are the unknot ($0_1$). $L(K)$ is also used to represent the $k$-tangle triple $(L(K),G_1,G_2)$ where $G_1$ and $G_2$ are the $k$-star graphs shown in green and red respectively for the case $k=2$ (see Definition \ref{def:tangle} in Section \ref{sec:linkGoodMeasures}) (b) A pattern that increases some complexities of 2SAPs. Using our terminology, this is $L(3_1)$.}
\label{fig:figures3}
\end{figure}

\begin{restatable}
{thm}{moregoodmeasures}

Given $k \geq 1$, suppose that $F$ is a function $F:\mathcal{L}^k\rightarrow [0,\infty)$ satisfying:
\begin{itemize}
    \item[(1)] if $U\in \mathcal{L}^k$ is an unlink, $F(U)=0$; 
    \item[(2)] there exists a knot $K$ such that for any link $L\in \mathcal{L}^k$ containing $m$ factors having knot-type $K$, $F(L)\geq mF(L(K))$, where $L(K)$ is defined in Figure \ref{fig:L(K)separated}. 
\end{itemize}
Then, $F$ is a good measure of $k$-component spanning link complexity.\label{thm:moregoodmeasures}
\end{restatable}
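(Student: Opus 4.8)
The plan is to verify directly that hypotheses (1)--(2) force $F$ to satisfy the defining properties of a good measure of $k$-component spanning link complexity. Recall that, by Definition~\ref{def:goodmeasure}, a function $F:\mathcal{L}^k\to[0,\infty)$ is such a good measure precisely when $F$ vanishes on unlinks and there is a single $k$-tangle $T$ together with a constant $\delta>0$ so that any link admitting a tangle decomposition (in the sense of the concatenation/tangle-product operation of Definitions~\ref{def:product} and~\ref{def:tangle}) with at least $m$ of its factors equal to $T$ satisfies $F\ge\delta m$. Hypothesis~(1) is exactly the ``vanishing on unlinks'' requirement, so all that remains is to exhibit a suitable $T$ and $\delta$.

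I would take $T$ to be the $k$-tangle triple $(L(K),G_1,G_2)$ pictured in Figure~\ref{fig:L(K)separated}, where $K$ is the knot provided by hypothesis~(2), and set $\delta:=F(L(K))$. (We may assume $F(L(K))>0$; otherwise hypothesis~(2) is vacuous and there is nothing to prove.) The heart of the argument is the book-keeping claim: \emph{if $L\in\mathcal{L}^k$ admits a tangle decomposition containing at least $m$ copies of $T$, then $L$ contains at least $m$ factors of knot-type $K$.} This is where one unwinds the concatenation operation: gluing a $k$-tangle to $(L(K),G_1,G_2)$ performs a connected sum of one designated component with $K$ and attaches only trivial ($0_1$) summands to the other $k-1$ components; consequently, stacking $m$ copies of $T$ (interleaved arbitrarily with other $k$-tangles in the decomposition) yields a link whose component knots carry, altogether, at least $m$ connected summands isomorphic to $K$. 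That this count cannot shrink is a consequence of the unique prime decomposition of knots under connected sum: a knot of the form $\#^{m}K\,\#\,J$ has at least $m$ factors of knot-type $K$ for every knot $J$.

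Granting the claim, hypothesis~(2) applies to any such $L$ and gives $F(L)\ge mF(L(K))=\delta m$. Combined with hypothesis~(1), this is precisely the assertion that $F$ is a good measure of $k$-component spanning link complexity, with witnessing tangle $T=(L(K),G_1,G_2)$ and linear rate $\delta=F(L(K))$.

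I expect the only genuine obstacle to be making the book-keeping claim precise --- that the number of copies of the tangle $(L(K),G_1,G_2)$ occurring in a tangle decomposition coincides with (a lower bound for) the number of $K$-summands among the components of $L$. One must track how the $k$-star graphs $G_1,G_2$ identify components across successive tangle products, so that each copy of $T$ contributes a genuinely new $K$-summand rather than one that gets reabsorbed, and check that interleaving with other $k$-tangles can only raise the $K$-factor count. After Definition~\ref{def:product} is unwound these points reduce to freeness of the knot monoid under $\#$, and the remainder of the proof is formal.
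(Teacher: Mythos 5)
Your proposal is correct and takes essentially the same approach as the paper: choose the witnessing tangle-triple $(L(K),G_1,G_2)$, note that any product containing $m$ interleaved copies of it has at least $m$ connected-sum factors of knot-type $K$, and apply hypothesis (2) to get $F\geq mF(L(K))\geq \frac{m}{k}F(L(K))$. The only difference is that you spell out the counting step (tracking $K$-summands through the tangle products via unique prime decomposition) that the paper dispatches with a single ``Observe that.''
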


In Section \ref{subsec:additional}, we establish that the genus, the number of prime factors, the span of the Alexander polynomial and the log of the number of $p$-colorings each satisfy the conditions of Theorem \ref{thm:moregoodmeasures} and hence are good measures. 

Although Theorem \ref{thm:LinkMeasureToGoodMeasure} and Theorem \ref{thm:moregoodmeasures} generate many examples of good measures of $k$-component spanning link complexity, there are other good measures that do not meet the hypotheses of either of these theorems, for example the splitting number. The splitting number of a link is defined to be the minimum number of crossing
changes between distinct components that are required to convert the link into a split link. In Proposition \ref{Prop:splittingGood} of Section \ref{subsec:additional}, we show that the splitting number is a good measure of 2-component spanning link complexity and that it cannot meet the hypotheses of either Theorem \ref{thm:LinkMeasureToGoodMeasure} or Theorem \ref{thm:moregoodmeasures}.

Having determined a range of good measures of spanning link complexity, we next consider the implications for 2SAPs in an $(N,M)$-tube $\tube=\tube_{N,M}$ where from here on we assume $M\geq N\geq 1$ and $M+N\geq 3$.  To determine which links are embeddable as a 2SAP in a given tube, we introduce a constrained version of the trunk complexity for a 2-component link $L=K_1\cup K_2$ which we call Equal-height-trunk (EH-trunk) (a precise definition is given in Section \ref{Sec:EHTrunk}). The modification of the definition of trunk is necessary for the study of 2SAPs due to the restrictions on the span of each component.  In Section \ref{Sec:EHTrunk}, we prove the following theorem.

\begin{restatable}{thm}{equalheight}
\label{thm:equalheight}
    A nontrivial 2-component link can be confined in a $M\times N$ tube in such a way that each component has the same span if and only if \text{EH-trunk}$(L) < (M+1)(N+1)$.
\end{restatable}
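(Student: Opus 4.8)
The plan is to prove the two implications of Theorem~\ref{thm:equalheight} separately, translating in each direction between a lattice $2$SAP in the tube and a smooth \emph{equal-height presentation} of the link $L=K_1\cup K_2$, by which I mean (as set up in Section~\ref{Sec:EHTrunk}) a Morse function $h$ on $S^3$ whose restrictions to $K_1$ and to $K_2$ attain the same maximum value and the same minimum value; then $\text{EH-trunk}(L)$ is the minimum over such $h$ of $\max_t|h^{-1}(t)\cap L|$. Write $A:=(M+1)(N+1)$ for the number of lattice points in a cross-section of the tube perpendicular to the spanning direction, so that at most $A$ edges of any spanning conformation cross a plane $x=c+\tfrac12$.

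\emph{Necessity.} Realize $L$ as a $2$SAP spanning $x\in[0,s]$ and perturb the coordinate $x$ to a Morse function $h$ on $L$ without creating or destroying level crossings; since $K_1$ and $K_2$ both run from $x=0$ to $x=s$, this is an equal-height presentation, and every regular level meets $L$ in at most $A$ points, one per column of the cross-sectional grid. Hence $\text{EH-trunk}(L)\le A$, and it remains to rule out equality. The occupancy of an interior plane $x=c+\tfrac12$ is even, being a sum of two even numbers $\ge 2$ (each $K_i$ crosses it evenly and at least twice), so for odd $A$ we already get $\text{EH-trunk}(L)\le A-1$. For even $A$, suppose some plane were fully occupied. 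If \emph{every} interior plane were fully occupied, then on all of $(0,s)$ the conformation would consist of $A$ pairwise parallel edges in the spanning direction capped by in-plane arcs at $x=0$ and $x=s$; this diagram has no crossings, so $L$ would be an unlink, contrary to hypothesis. Otherwise a maximal run of fully occupied planes has a boundary level at which the occupancy drops, and a short bookkeeping argument over the possible local configurations of $L$ at each lattice level (each vertex is a ``pass-through'', a ``turn'', or an in-plane vertex) shows that at such a level a local maximum or minimum of $L$ is a single in-plane edge joining two \emph{adjacent} columns, so that sliding it through the run vacates those two columns (cancelling, at worst, against another extremum) and strictly lowers the maximum occupancy. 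In every case $\text{EH-trunk}(L)<A$.

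\emph{Sufficiency.} Conversely, assume $\text{EH-trunk}(L)<A$ and fix an equal-height presentation $h$ realizing it, so that every regular level meets $L$ in at most $A-1$ points. Comb $L$ into the lattice tube by the method used for knots and links in \cite{ishihara2017bounds} (which realizes the trunk bound of \cite{ozawa2010waist}): cut the range of $h$ into unit-height slabs, one per critical level; assign the at most $A-1$ strands present in each slab to distinct columns of the $(N+1)\times(M+1)$ cross-sectional grid; and use in-plane ($y$- and $z$-direction) edges to realize both the permutation of strands recorded between consecutive critical levels and the caps and cups at the critical levels. Because at least one column is free at every height, the strands can be moved one at a time without collision and each local maximum or minimum can be installed in a free column. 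The result is a self-avoiding polygon for each component, and since $h|_{K_1}$ and $h|_{K_2}$ share their maximum and minimum the two polygons have the same $x$-span, so the conformation is a $2$SAP in the tube.

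\emph{The main obstacle.} The crux is the strict inequality in the necessity direction when $A$ is even: ruling out a fully occupied level requires the structural analysis of a $2$SAP along a maximal run of fully occupied planes and a careful check that the extremum-sliding isotopy behaves correctly at the two boundary levels of the run, where the run's strands meet the rest of $L$ (in particular when the run abuts $x=0$ or $x=s$, so that care is needed not to disturb the common maximum or minimum of the two components). For odd $A$ the parity observation alone suffices, and the combing argument for sufficiency is routine once the bound $A-1$ is in hand, modulo the same care as in \cite{ishihara2017bounds} in moving all strands simultaneously without introducing self-intersections.
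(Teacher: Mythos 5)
Your strategy coincides with the paper's on both directions: the bound $\text{EH-trunk}(L)\le (M+1)(N+1)$ is read off from a spanning conformation, a fully occupied cross-section is eliminated by sliding a cap edge from one boundary hinge of a maximal full run to the other, the all-full case is dismissed because it forces an unlink, and sufficiency is delegated to the combing construction behind Theorem \ref{thm:IshiBound} together with the observation that it preserves the relative heights of critical points (so equal maxima and minima become equal spans). Your parity shortcut for odd $(M+1)(N+1)$ is a pleasant simplification the paper does not use, and your ``bookkeeping'' claim is sound: just inside a full run every column carries an $x$-edge, so each vertex of a boundary hinge already has one spanning edge and any in-plane edge there must be a cap joining two descending (or ascending) strands.

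The genuine gap is the parenthetical ``(cancelling, at worst, against another extremum).'' If the cap slid down from the upper boundary hinge $H_b$ lands on an edge already present at the lower boundary hinge $H_a$ in the same pair of columns, then that edge at $H_a$, the slid cap, and the two straight strands between them form a closed rectangle, i.e.\ an entire component of $L$ of span $b-a$. ``Cancelling'' it is not an isotopy --- it deletes a component and changes the link type --- so in this case your move is unavailable and the claim that the maximum occupancy still strictly drops does not follow. The paper's resolution, and the only point in the necessity direction where the equal-span hypothesis is actually used, is that this configuration cannot occur: a rectangle component would have span $b-a$, strictly smaller than the span of the other component unless $a=h_{min}$ and $b=h_{max}$, which is exactly the already-excluded unlink case. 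As written, your necessity argument invokes only nontriviality and never the equal-span hypothesis, so it cannot rule out the collision; adding this one observation (and checking, as you flag, that sliding a cap at $b=h_{max}$ does not destroy the common maximum) closes the gap and makes your proof match the paper's.
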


We also consider different ``modes" of embeddability for 2SAPs. We first generalize the notion of local/non-local (aka 1-filament or 2-filament) knot patterns for one component links from \cite{beaton2018characterising} to  2SAPs and obtain a version of \cite[Result $4^*$]{beaton2018characterising} using equal-height trunk. For this, we consider split 2-component links $L=0_1 \cup K$ and consider 2SAPs equivalent to $L$. Removing one edge from each component at each end of the 2SAP, yields a 2SAP pattern (see definitions in Section \ref{sec:preliminaries}).  If the pattern is isotopic as a 4-string tangle to Figure \ref{fig:tight} (where the trefoil is replaced by $K$) then we say the pattern is a \textit{local 2SAP knot pattern of $K$} and we say that the pattern corresponds to the 2-tangle-triple denoted by $L(K)$ (see Figure \ref{fig:L(K)}).  Otherwise we say it is a \textit{non-local 2SAP knot pattern of $K$}. We show that for meridionally small knots, all tubes that admit a non-local knot  pattern also admit a local version (see Lemma \ref{prop:fittinglocal}), however, for other kinds of knots a larger tube may be needed for the local than for the non-local knot patterns.  
The class of meridionally small knots includes 2-bridge knots and torus knots. Note that a knot is meridionally small if there exists no essential surface properly embedded in the knot exterior with meridional boundary, for more details see \cite{ozawa2010waist}.
We obtain the following corollary to Theorem \ref{thm:equalheight}.

\begin{restatable}{corollary}{equalheightplus}
\label{cor:equalheight}
For a split 2-component link $L=0_1\cup K$, \text{EH-trunk}$(L)=$\text{trunk}$(K)+2$. Furthermore, if $L$ embeds as a 2SAP in a  $(N\times M)$ tube (ie if $\text{EH-trunk}$(L)$<(N+1)(M+1)$), a local 2SAP knot pattern of $K$ embeds in a $((N+1)\times M)$ tube. For any meridionally small knot $K$, a local 2SAP knot pattern of $K$ also embeds in a  $(N\times M)$ tube.

\end{restatable}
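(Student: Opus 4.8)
The plan is to prove the numerical identity $\text{EH-trunk}(0_1\cup K)=\text{trunk}(K)+2$ first and then bootstrap it, via Theorem~\ref{thm:equalheight}, to the two embeddability claims.

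For the identity I would argue both inequalities directly from the definition of equal-height trunk. For the inequality $\le$, I would exhibit an explicit equal-height Morse representative of $L=0_1\cup K$: put $K$ in a thin position realizing $\text{trunk}(K)$ inside the half-space $y<0$ with height-span $[0,1]$, and put a round circle for $0_1$ in the half-space $y>0$ with the same span $[0,1]$; these are disjoint, realize the split link, and at every regular level their total width is at most $\text{trunk}(K)+2$. For $\ge$, I would take an arbitrary equal-height representative with common span $[a,b]$, $a<b$; restricting to $K$ gives a Morse representative, so $K$ attains width $\text{trunk}(K)$ at some regular level $t^\ast$, and for nontrivial $K$ such a level lies in the open interval $(a,b)$ (the maximal width $\ge 4$ of $K$ is attained on an open set of regular levels, none of which can be near the extreme heights $a,b$, where only two strands of $K$ are present; the case of trivial $K$, where $L$ is the unlink, is even easier and also gives $4$). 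Since the $0_1$-component spans the same interval $[a,b]$, it meets the level $t^\ast\in(a,b)$ in at least two points, so the total width at $t^\ast$ is at least $\text{trunk}(K)+2$. Combined with Theorem~\ref{thm:equalheight}, this gives that $L$ is a 2SAP in the $(N\times M)$-tube if and only if $\text{trunk}(K)+2<(N+1)(M+1)$.

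To put a local 2SAP knot pattern of $K$ into the $((N+1)\times M)$-tube, I would construct the required 2SAP by hand. Route the unknot component $A$ as a nearly straight spanning polygon of bounded width along the ``extra row'' that the $((N+1)\times M)$-tube gains over the $(N\times M)$-tube, and place the component $B$ so that $K$ is tied inside a bounded $x$-window while $B$ consists of two straight strands outside that window; the existence of such a knotted window inside a tube of the appropriate size comes from the one-component trunk characterization of tube-embeddability \cite{ishihara2017bounds} together with the one-component local-pattern constructions of \cite{beaton2018characterising}. The extra row supplies $M+1\ (\ge 3)$ additional lattice sites per cross-section over the $(N\times M)$-tube, which I expect is precisely the slack needed to keep $A$ and $B$ disjoint and to absorb the in-plane turn-around edges of the two polygons at the $x$-extremes; cutting one edge from each component at each end of the result then yields, by construction, a tangle isotopic to $L(K)$, i.e.\ a local 2SAP knot pattern of $K$. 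For the sharper statement when $K$ is $2$-bridge, I would use $\text{trunk}(K)=4$ together with Lemma~\ref{prop:fittinglocal}: since every tube in which $K$ embeds also admits a \emph{local} one-component knot pattern of $K$, the knotted window for $B$ already fits inside the $(N\times M)$-tube, and because $\text{trunk}(K)$ is so small the cross-sectional budget $(N+1)(M+1)>\text{trunk}(K)+2$ still leaves room to route $A$ alongside with the needed turn-around, producing the local pattern in the $(N\times M)$-tube itself.

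The routine parts are the two inequalities in the trunk identity. The hard part will be the lattice bookkeeping in the construction: verifying that the lattice edges per cross-section consumed by the localized copy of $K$, plus those reserved for the in-plane turn-arounds of the two polygons, fit within $(N+2)(M+1)$ (respectively within $(N+1)(M+1)$ in the $2$-bridge case). This is where the interplay between the one-component local/non-local embedding results and the two-polygon constraint defining a 2SAP must be controlled carefully.
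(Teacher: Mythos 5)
Your proof of the identity $\text{EH-trunk}(L)=\text{trunk}(K)+2$ is correct and is essentially the paper's argument: the upper bound by splitting off a round unknot with matching top and bottom heights, and the lower bound by observing that a regular level where the $K$-component attains width at least $\text{trunk}(K)$ necessarily lies strictly between $h_{min}$ and $h_{max}$, where the unknot component must contribute two more points. (Your worry about the level being ``near the extremes'' is unnecessary: any regular level with positive $K$-width already lies in the open interval $(h_{min},h_{max})$, so the argument covers the unlink case too.)

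For the two embeddability claims, however, you take a genuinely different route, and it has a gap you yourself flag: you propose an explicit lattice construction (unknot routed along the extra row, $K$ tied in a bounded window) and defer the decisive step --- that $M+1$ extra sites per cross-section, respectively the leftover budget $(N+1)(M+1)-(\text{trunk}(K)+2)$ in the $2$-bridge case, actually suffice to route both polygons and their turn-arounds --- to ``bookkeeping'' that is never carried out. The paper avoids all lattice bookkeeping by staying at the level of trunks and using Theorem \ref{thm:equalheight} as the sole bridge to the lattice: localizing $K$ is achieved by ``pulling the string'' as in Figure \ref{fig:mayincrease} (the move from \cite{beaton2018characterising}), which raises the EH-trunk of the embedding by at most $2$; since $\text{EH-trunk}(L)<(N+1)(M+1)$ and $(N+2)(M+1)\geq (N+1)(M+1)+2$ (as $M\geq 1$), the localized embedding satisfies the hypothesis of Theorem \ref{thm:equalheight} for the $((N+1)\times M)$ tube, and the conclusion is immediate. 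For $2$-bridge $K$ the paper invokes Proposition \ref{prop:fittinglocal}: a trunk-minimizing position of a $2$-bridge knot is a bridge position, so it contains the special arc and the localization costs nothing in trunk, whence the EH-trunk bound $<(N+1)(M+1)$ is preserved and no extra row is needed. If you want to salvage your construction-based approach you must actually prove the cross-sectional counts close up; otherwise, replace that part with the trunk estimate plus Theorem \ref{thm:equalheight}, which is where the real content of the corollary lives.
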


Based on the pattern theorem for 2SAPs, in Section \ref{sec:2SAPcomplexity} we show that all but exponentially few 2SAPs have high entanglement complexity.  For this we consider two ways of measuring the {\it size} $m$ of a 2SAP: by {\it span} $s$ ($x$-span of the 2SAP)  or by {\it length} $n$ (total number of 2SAP edges). (Definitions are given in Section \ref{sec:preliminaries}.)  The resulting theorem is as follows:

\begin{restatable}{thm}{exprare}
\label{thm:exprare}
Let  $F$ be a good measure of 2-component spanning link complexity and let $L\in {\cal L}_T^2$ be a 2-tangle triple that satisfies condition (ii) for $F$ of the good measure Definition \ref{def:goodmeasure}. Suppose there exists a proper 2SAP pattern $P_L$  such that whenever a 2SAP  contains $j>0$ non-overlapping translates of $P_L$ it is equivalent as a link to a tangle product $L' = J_1\underline{*} L \underline{*} J_2 \underline{*} L \underline{*} J_3 \underline{*}...\underline{*}J_{j}\underline{*}L\underline{*}J_{j +1}$ for some choice of $J_1,...,J_{j+1}\in{\cal L}_T^2$. 

Consider any $\tube=\tube_{N,M}$  that admits 2SAPs  containing $P_L$. 

Fix the 2SAP size measure $m$ to be either  span or  length.  Then, there exists a positive integer $m_L$ such
that for sufficiently large $m > m_L$, all but exponentially few size  $m$ 2SAPs in $\tube$ have $F$-complexity
which exceeds $\displaystyle{F(L)(\frac{m}{2m_L} -1)}$.
\end{restatable}

Then in Proposition \ref{prop:patterns} we establish that an appropriate $P_L$ exists for splitting number and for any  good measure of 2-component spanning link complexity $F$ that satisfies either Theorem \ref{thm:LinkMeasureToGoodMeasure} or Theorem \ref{thm:moregoodmeasures}.
For the latter two cases, a local 2SAP knot pattern of a knot $K$ works as $P_L$ in the theorem. 
If $F$ satisfies Theorem \ref{thm:LinkMeasureToGoodMeasure},  $K$ can  be any knot, and hence taking $K=3_1$ will give for $\tube=\tube_{3,1}$ (or larger) that all but exponentially few size sufficiently large size $m$ 2SAPs in $\tube$ have $F$-complexity
which exceeds $\displaystyle{F(L(3_1))(\frac{m}{2m_L} -1)}$. In the case of splitting number, we further establish that a $P_L$ that works fits in $\tube=\tube_{3,1}$ (or larger) and is not a local knot pattern.

Note that we expect the previous result  can be extended in at least two directions, however, we do not present the details here.  First, based on the general pattern theorem \cite{Madras99}, the result should also hold for length $n$ 2SAPs subject to a stretching or compressing force as well as to Hamiltonian 2SAPs.  Second, we expect it can be generalized to $k$SAPs, $k$ mutually avoiding spanning SAPs in $\tube$.

For Corollary \ref{cor:equalheight}, we considered split 2-component links where one component is the unknot and the other a given knot. The result indicates that a larger tube size may be needed to embed a local knot pattern.    We next consider the more complicated situation of non-split links. Even when one of the components is the unknot, determining which tube admits an embedding with a given set of desired properties (for example one that achieves the minimum bridge number)  is more complicated.  For example, when studying invariants of multi-component links defined in terms of minimizing certain complexities over all diagrams, it is natural to ask whether it is always more efficient to calculate the invariant by minimizing the complexity of each knot component individually. For instance, if the complexity is defined as the minimum number of crossing changes required to trivialize a link, the authors of \cite{friedl2014specious} gave an example, a non-split 3-component link of an unknot with two trefoils having unlinking number 2, where the answer is yes. In other words, for the given example, changing crossings between components always results in a non-minimal unlinking sequence. On the other hand, 
Kodani showed that when the invariant is the bridge number and one component is the unknot, then there is a large class of non-split links where minimizing each component first is not  the most efficient way \cite{kodani2013new}. 

In this paper, we prove an analogue of Kodani's result for the EH-trunk. For this, we introduce a more constrained version of the  equal-height-trunk measure, denoted EH-trunk$_{K_1=j}(L)$, which fixes the trunk of the $K_1$ component to be $j$ (a more precise definition is given in Section \ref{Sec:EHTrunk}). This modification is necessary due to our interest in forcing one component to meet each cross-section of the tube in a prescribed number of points. For example, suppose that $L=K_1\cup K_2$ is a non-split 2-component link such that $K_1$ is the unknot. We are interested in finding the smallest lattice tube size in which $L$ can occur as a 2SAP. A naive way to do so is to fit $K_1$ first in the simplest way possible and then embed $K_2$ later. In this paper, we show that this is not always the most efficient way. We also discuss cases where trying to force one component to have a certain number of local maxima can cause the need for a larger tube size.
We achieve this by bounding EH-trunk$_{K_1=j}(L)$ in terms of the index $\sigma_2$ and the dual index $\omega_1$ for certain 2-component satellite links $L$. These indices arise naturally when we decompose the exterior of our satellite link along essential tori. Roughly speaking, we form a two component satellite link $L=K_1\cup K_2$ by taking two solid tori $V_1$ and $V_2$ and knotting them in 3-space. The knotted versions of $V_1$ and $V_2$ contain $K_1$ and $K_2$, respectively. We say $V_i$ is the companion of $K_i$. The index $\sigma_i$ of $K_i$ in $V_i$ is the minimal number of times that $K_i$ intersects a properly embedded meridian disk for $V_i$. If $V_1$ is unknotted, then $V'=\overline{S^3\setminus V_1}$ is a solid torus and the index of $V_2$ as a knot in $V'$ is the dual index $\omega_1$. See Section \ref{Sec:SatLinks} for additional details.

\begin{restatable}{thm}{restrictedtrunk} \label{thm1} Fix $j \geq \beta(K_1)$, where $\beta(K_1)$ is the bridge number of $K_1$ in $S^3.$ Suppose that the index $\sigma_1$ of $K_1$ in $V_1$ is strictly greater than $j$. Let $L=K_1\cup K_2$ be a 2-component satellite link, where $K_1$ and the companion of $K_1$ are each unknots. Then, EH-trunk$_{K_1=j}(L)\geq 2+2\omega_1\sigma_2$.
\end{restatable}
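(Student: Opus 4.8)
The plan is to analyze a minimal equal-height-trunk presentation of $L$ subject to the constraint that the unknotted component $K_1$ meets each level sphere in exactly $j$ points, and to extract from such a presentation a meridian disk of the companion solid torus $V_2$ together with a meridian disk of the complementary solid torus $V' = \overline{S^3 \setminus V_1}$, whose intersection patterns with $K_2$ realize the indices $\sigma_2$ and $\omega_1$. First I would set up the standard Morse-theoretic picture: realize $L$ in $S^3$ so that the height function restricted to $K_1$ has exactly $j$ maxima and $j$ minima (possible since $j \geq \beta(K_1)$ and $K_1$ is an unknot, so its bridge number is $1$, but we are forcing the non-minimal value $j$), and so that the maximal level sphere $S$ in the EH-trunk realizes $\text{EH-trunk}_{K_1=j}(L) = |S \cap L|$. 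Write $|S \cap L| = |S\cap K_1| + |S \cap K_2| = j + |S\cap K_2|$; the content of the theorem is the lower bound $|S\cap K_2| \geq 2\omega_1 \sigma_2 - 2 + (2 - j)$... no — more carefully, we want $\text{EH-trunk}_{K_1=j}(L) \geq 2 + 2\omega_1\sigma_2$, so since the EH-trunk is at least $2j$ from $K_1$'s contribution in a generic equal-height configuration, the real work is to show the total count cannot dip below $2 + 2\omega_1\sigma_2$, which will be governed by $K_2$'s forced intersections with a meridian system.

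The key geometric step is to use the hypothesis $\sigma_1 > j$. Since $K_1$ can be isotoped to meet a level sphere in only $j$ points but its index in its companion $V_1$ is strictly larger than $j$, the solid torus $V_1$ cannot be isotoped to sit "thinly" around this presentation of $K_1$; equivalently, any meridian disk of $V_1$ (hence of the unknotted complementary torus $V'$) must be ``spread out'' relative to the height function. Concretely, I would argue that there is a level sphere $S$ and a meridian disk $D'$ of $V'$ such that the curve $S \cap \partial D'$, together with the way $K_2 \subset V_2 \subset V_1$ wraps, forces $K_2$ to cross $S$ at least $2\omega_1$ times \emph{within} a region controlled by one meridian disk of $V_2$, and then — applying the definition of $\sigma_2$ as the minimal geometric intersection of $K_2$ with a meridian disk of $V_2$ — each such ``wrap'' contributes at least $\sigma_2$ strands, giving $|S \cap K_2| \geq 2\omega_1\sigma_2$. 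Adding back the $2$ from a pair of extreme points (one max and one min of $K_2$, or the unavoidable minimum contribution of a nontrivial component), yields $\text{EH-trunk}_{K_1=j}(L) \geq 2 + 2\omega_1\sigma_2$. This is essentially the satellite analogue of the trunk-of-a-satellite inequality (cf. the results of Section \ref{Sec:SatLinks}), adapted to the equal-height and fixed-$K_1$-trunk constraints.

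The main obstacle I anticipate is precisely the interaction between the two constraints: forcing $|S\cap K_1| = j$ everywhere (the equal-height condition for the $K_1$ component at the prescribed level $j$) while simultaneously locating a level sphere that is "efficient" for measuring $K_2$'s wrapping. In the unconstrained trunk setting one is free to choose the thin level sphere after isotoping $L$ arbitrarily; here the isotopy is restricted because $K_1$'s span must match $K_2$'s span and $K_1$'s trunk is pinned at $j < \sigma_1$. I would handle this by a thin-position argument applied to $K_2$ \emph{relative} to a fixed equal-height presentation of $K_1$: among all 2SAP-type presentations of $L$ with $K_1$ in $j$-bridge equal-height form, take one minimizing the width of $K_2$; a standard innermost-disk / thin-position exchange argument then shows that an essential meridian disk of $V_2$ can be made almost-level, and a parallel argument for $V'$ uses $\sigma_1 > j$ to prevent the obvious simplification. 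The delicate point is verifying that these two meridian-disk arguments can be run at the same level sphere (or at nearby ones that can be amalgamated), which is where the hypothesis that both $K_1$ and its companion are unknots is used — it makes $V'$ genuinely a solid torus so that $\omega_1$ is defined and so that the ``dual'' meridian disk argument is available. Once both disks are in place, the count $2 + 2\omega_1\sigma_2$ follows by multiplicativity of geometric intersection numbers under the satellite operation, exactly as in the companion index inequalities referenced above.
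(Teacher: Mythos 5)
There is a genuine gap, and also a factual error in your setup. The error first: you write ``$K_2 \subset V_2 \subset V_1$'' and assert that a meridian disk of $V_1$ is ``hence'' a meridian disk of the complementary solid torus $V'=\overline{S^3\setminus V_1}$. Neither is right. In the satellite construction here the two companion tori are disjoint: $V_2$ is a neighborhood of the \emph{second} component $L^0_2$ of the companion link, so $V_2$ lies in the complement of $V_1$, not inside it, and a meridian disk of $V_1$ (which meets $K_1$ at least $\sigma_1$ times) is a completely different object from a meridian disk of $V'$ (which meets $K_2$ at least $\omega_1\sigma_2$ times, since it must cut $V_2$ into at least $\omega_1$ meridian disks of $V_2$, each meeting $K_2$ at least $\sigma_2$ times --- this is Lemma \ref{lem:meridianOutside}). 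Your proposal never cleanly separates these two kinds of disks, and that separation is exactly where the hypothesis $\sigma_1>j$ does its work.

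The missing idea is the saddle analysis of the torus $\partial V_1$ under the height function. The paper's argument does not try to level a meridian disk of $V_2$ or of $V'$ by thin position (your proposed route, whose amalgamation step you yourself flag as unresolved). Instead it shows that the induced foliation of $\partial V_1$ must contain an essential saddle (Lemma \ref{lem:esssaddleexist}), takes the \emph{highest} such saddle $\sigma=s_1\vee s_2$ at level $r$, and considers the two level disks $D_1,D_2\subset h^{-1}(r)$ bounded by $s_1,s_2$. After an innermost-curve surgery these become disks disjoint from $\partial V_1$ in their interiors, and there is a dichotomy: either the saddle is non-nested, in which case $D_1,D_2$ are meridian disks of $V_1$, each meeting $K_1$ at least $\sigma_1$ times, so the knee-shaped piece of $V_1$ above the saddle forces $K_1$ to have at least $\sigma_1>j$ maxima --- contradicting the constraint; or the saddle is nested, in which case $D_1,D_2$ are meridian disks of $V'$, and Lemma \ref{lem:meridianOutside} gives $|D_i\cap K_2|\geq\omega_1\sigma_2$ for each $i$, hence $|h^{-1}(r)\cap K_2|\geq 2\omega_1\sigma_2$; the equal-height condition adds $|h^{-1}(r)\cap K_1|\geq 2$. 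Your sketch contains the right ingredients in spirit (the index-composition count $\omega_1\sigma_2$, the role of $\sigma_1>j$ as an obstruction, the final $+2$), but without the highest-essential-saddle mechanism you have no way to produce a single level sphere carrying two meridian disks of $V'$, which is precisely the step your proposal identifies as delicate and leaves open.
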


We finish the paper by giving several examples of links whose EH-trunk$(L)$ and EH-trunk$_{K_1=m}(L)$ can be very different. Consequently, for each such example $L$, to embed the link $L$ in a small tube size, each component of the link has to be embedded in a complicated way. Some of our examples are inspired by the works of Kodani \cite{kodani2013new}. The techniques used in our proofs, however, are fundamentally different from Kodani's since Kodani was trying to minimize a different complexity than us. In particular, Kodani was able to use techniques pioneered in \cite{schultens2003additivity} to characterize the saddle types of a taut essential torus completely. In contrast, we focus on a very specific (highest) saddle.

\section{Good measures of entanglement complexity}

\subsection{Preliminaries}
\label{sec:preliminaries}
In this paper $\mathbb{Z}^3$ is used to denote the simple cubic lattice consisting of vertices in $\mathbb{Z}^3$ and edges that join vertices unit distance apart. Recall that an embedding is a continuous map between topological spaces $f:X\rightarrow Y$ such that $f:X\rightarrow f(X)$ is a homeomorphism. The \textit{length} of an embedding $f:S^1\rightarrow \mathbb{Z}^3$ is the number of edges in $f(S^1)$. 
An embedding of $S^1$ in $\mathbb{Z}^3$ is called a self-avoiding polygon or SAP for short. 
An $N \times M$ \textit{tube} is the sublattice of $\mathbb{Z}^3$ induced by the vertex set $\{(x,y,z)\in \mathbb{Z}^3| x \geq 0, 0 \leq y \leq N, 0 \leq z \leq M \}$, we denote this tube by $\mathbb{T}_{N,M}$. For a given tube, the \textit{span} of an embedding of $S^1$ is the absolute difference between the smallest and largest $x$ value attained by any vertex in the image of the embedding. 

\begin{definition}
\label{def:kSAP}
A $k$SAP in a tube $\mathbb{T}_{N,M}$ is the image of an embedding $f:\bigcup_{i=1}^{k} S^1 \rightarrow \mathbb{T}_{N,M}$, which consists of $k$ disjoint self-avoiding polygons with the additional property that each component has the same span and each has a vertex in the $x=0$ plane.
\end{definition}

Consequently, the \textit{span} of a $k$SAP is the largest $x$ value attained by the $k$SAP. See Figure \ref{fig:isa2sap} for a 2SAP of span 6. See Figure \ref{fig:nota2sap} for a system of two self-avoiding polygons that is not a 2SAP.

Note that although different SAPs in a $k$SAP have the same span, they may not have the same number of edges. For a given $\tube=\tube_{N,M}$, define $\mathcal{Q}^k_{\tube}$ to be the set of $k$SAPs in $\tube$. Let $\mathcal{Q}^k_{\tube,s}(n)$ be the subset of $k$SAPs in $\mathcal{Q}_\tube$ with span-$s$ and length-$n$, and let $q^{(k)}_{\tube,s}(n) = |\mathcal{Q}^k_{\tube,s}(n)|$. Depending on the application, the   {\it size} of a $k$SAP can be considered to be either its span or its length.  Thus it is useful to define $q^{(k)}_{\tube,s}=q^{(k)}_{\tube,s}(\cdot) = \sum_n |\mathcal{Q}^k_{\tube,s}(n)|$, the number of span-$s$ $k$SAPs, and $q^{(k)}_\tube(n) = q^{(k)}_{\tube,\cdot}(n)=\sum_s |\mathcal{Q}^k_{\tube,s}(n)|$,
the number of length-$n$ $k$SAPs.

For SAPs and 2SAPs ($k=1,2$) concatenation and transfer-matrix arguments have been used to establish pattern theorems and the existence of exponential growth rates for the counts $q^{(k)}_{\tube,s}, q^{(k)}_\tube(n)$.    The concatenation operation needed for these arguments is defined next.  

\begin{definition}
\label{def:concat}
The concatenation of two $k$SAPS, $\theta_1\in \mathcal{Q}^k_{\tube,s_1}(n_1)$ and $\theta_2\in \mathcal{Q}^k_{\tube,s_2}(n_2)$,  in a tube $\tube=\tube_{N,M}$ is defined by an algorithm for $k=1,2$
to yield a unique element $\theta_3=\theta_1~(+)_{k}~\theta_2\in \mathcal{Q}^k_{\tube,s_1+s_2+d_k}(n_1+n_2+e_k)$.  Details of the algorithm for $k=2$ and $N+M\geq 3$ are given in \cite[Lemma 3.3]{Atapour10} with $d_2=18+8(N+M)$ and $e_2=4d_2+8(N+M)+4$, and for $k=1$ and $N+M\geq 2$ in \cite{BES19}.
The algorithm for $k=2$ is based on the following steps: 
$\theta_2$ is $x$-translated $s_1+d_k$ units so that it is $d_k$ units to the right of $\theta_1$. Then $k$ rightmost edges (the ``bottom-most" edge  from each polygon in the plane $x=s_1$) from $\theta_1$ are removed and $k$ leftmost edges (the ``bottom-most" edge  from each polygon in the plane $x=s_1+d_k$) from $\theta_2$  are removed. $e_k$ edges (whose union is topologically $2k$ embedded arcs) are added within the span-$d_k$ region between $\theta_1$ and $\theta_2$ to join the two $k$SAPs in such a way that a new $k$SAP in $\tube$ is formed. Figure \ref{fig:tangledconcat} shows a 2SAP $\theta_1$ (dashed lines) on the left and the two edges in the $x=s_1$ plane to be removed.  The block between the two planes has span $d_2/2$ and the red and blue lines indicate half the added edges ($e_2/2$) needed to concatenate $\theta_1$ to an arbitrary $\theta_2$ on the right.     For a $\theta_2$ a similar block of red and blue edges can be obtained to its left and the two blocks can be joined at the common pair of middle edges to create a single 2SAP $\theta_3$.   For $k=1$, one can focus just on the blue edges, however, less space is needed to match up both sides so that $d_1<d_2$ and $e_1<e_2$.  For sufficiently large $N,M$ it is expected that an analogous algorithm can be defined for $k>2$ but we do not provide details here.
\end{definition}

Note that the nature of the concatenation for $k=1$ gives that the knot-type of $\theta_3$ is given by the knot-type of $\theta_1\#\theta_2$, ie the concatenation operation is  a connected sum operation. 

For $k=2$, the operation involves a connected sum of components of $\theta_1$ and $\theta_2$ but it is not itself a connected sum operation.  However, as discussed in the next subsection, the operation is an example of a tangle product.

Using a concatenation operation that satisfies Definition \ref{def:concat}, we have, for example, for $d=d_k$ that 
\begin{equation}
\label{supermult1}
q^{(k)}_{\tube,s_1}q^{(k)}_{\tube,s_2} \leq q^{(k)}_{\tube,s_1+s_2+d}\leq 2^{A(N,M)(s_1+s_2+d)},
\end{equation}
where $A(N,M)$ is the number of lattice edges per span in an $(N,M)$-tube, 
and thus
\begin{equation}
  \label{supermult2}  
\log q^{(k)}_{\tube,s_1-d}+ \log q^{(k)}_{\tube,s_2-d} \leq \log q^{(k)}_{\tube,s_1+s_2-d}\leq A(N,M)(s_1+s_2+d)\log 2.
\end{equation}
Thus, the sequence $(\log q^{(k)}_{\tube,s-d}, s> d)$ is \emph{superadditive}, and by Fekete's lemma (see for example \cite[Lemma 1.2.1]{steele1997probability}), we have that the following limit exists
$$
\log \omega^{(k)}_\tube = \lim_{s\to \infty} \frac{1}{s} \log q^{(k)}_{\tube,s}  = \sup_{s} \frac{1}{s} \log q^{(k)}_{\tube,s-d},
$$
and thus the sequence $q^{(k)}_{\tube,s}$ grows exponentially in $s$. $\omega^{(k)}_\tube$ is referred to as the exponential growth rate for $q^{(k)}_{\tube,s}$ or, for short, the exponential growth rate for span-$s$ $k$SAPs. 
Similarly, if size is measured by length instead of span, the following limit exists for $e=e_k$:
$$
\log \Omega^{(k)}_\tube = \lim_{n\to \infty} \frac{1}{n} \log q^{(k)}_{\tube}(n)  = \sup_{n} \frac{1}{n} \log q^{(k)}_{\tube}(n-e).
$$
$\Omega^{(k)}_\tube$ is referred to as the exponential growth rate for  length-$n$ $k$SAPs. 

Regarding pattern theorems for $k$SAPs in tubes, proofs based on transfer matrix methods \cite{Sot98,atapourphdthesis,eng_phd_thesis} as well as proofs based on pattern-insertion strategies \cite{atapourphdthesis,Atapour10}  have been used for $k=1,2$.  Here we define patterns in a way that is consistent with both proof approaches. First, given any $\tube$, any span-$s$ $k$SAP $\pi$ in $\tube$, any non-negative $b\leq s+1$ and any non-negative $j\leq s+1$,  we refer to the set of all edges and half-edges of $\pi$ that occur between the planes $x=j-1/2$ and $x=b+j-1/2$ as the span-$b$ \textit{block} of $\pi$ at $x=j-1/2$.
A \textit{$k$SAP pattern} $P$ is then defines as an ordered pair $(\eta_P,E_P)$ where: there exists a $k$SAP $\pi$ such that for some choice of $b$ and $j$, $E_P$ occurs as a block of $\pi$ and such that $\eta_P$ corresponds to the \textit{left-connect} information of $E_P$ in $\pi$, that is $\eta_P$ is a set of $k$ pair-partitions such that the pairs in the $i$th set indicate which left-most end-points (points of degree-one in the half-integer plane) in $E_P$ are connected to each other on the left in the $i$th component of the $k$SAP $\pi$.
The span of a $k$SAP pattern is defined to be $b$, that is the $x$-span of the block that satisfies the definition. 
In the case $\pi$ has span $s$ and the choice of $b$ and $j$ are such that $j=0$ ($j>0$) and $b\leq s$ ($b=s+1-j$) then the pattern is called a \textit{start pattern} (\textit{end pattern}). While if $j>0$ and $b\leq s-j$ so that $E_P$ occurs as a block that does not intersect the start or end $x$-plane of $\pi$, then the pattern is called a \textit{proper pattern}. 
We refer to a proper $k$SAP pattern $(\eta_P,E_P)$ as a \textit{link pattern}  if  $E_P$ has exactly $2k$ half-edges intersecting each of its left-most and right-most $x$-planes and if the pairs of end-points that must be connected to each other on the left and on the right, as dictated by $\eta_P$,  are unit distance apart from each other.  Note that a  link pattern can be unambiguously  closed off into a span-$(b+1)$ $k$SAP by joining, as dictated by $\eta_P$,  each end-point pair on the left and right using exactly two half-edges and one edge. We thus define the link-type of a link pattern by the link-type of the resulting span-$(b+1)$ $k$SAP.

For $k=1,2$, the  concatenation operation combined with the fact that one can define a finite transfer matrix to generate $k$SAPs, leads to the following {\it pattern theorem}. We state it here for the cases that $k$SAP size is measured by span or by length but analogous results hold for more general settings (see \cite{atapourphdthesis,Madras99}).

\begin{thm}[\cite{Sot98,Atapour10}]\label{patternthm}
    For $k=1,2$, given a proper $k$SAP pattern $P$, let $q^{(k)}_{\tube,s}(P, < \ell)$ denote the number of span-$s$ $k$SAPs that contain fewer than $\ell$ $x$-translates of $P$.  Then there exists $\epsilon_P >0$ such that
    $$ \limsup_{s\to\infty} \frac{1}{s} \log q^{(k)}_{\tube,s}(P, < \epsilon_Ps) < \log \omega^{(k)}_\tube .$$
    Similarly, let $q^{(k)}_{\tube}(n;P, < \ell)$ denote the number of length-$n$ $k$SAPs that contain fewer than $\ell$ $x$-translates of $P$.  Then there exists $\tilde{\epsilon}_P>0$ such that
    $$ \limsup_{n\to\infty} \frac{1}{n} \log q^{(k)}_{\tube}(n;P, < \tilde{\epsilon}_Pn) < \log \Omega^{(k)}_\tube .$$
\end{thm}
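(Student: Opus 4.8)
The plan is to follow the standard pattern-theorem template (Kesten; \cite{Madras99}), built on the two ingredients emphasised just before the statement: the concatenation operation of Definition~\ref{def:concat} and a finite transfer matrix generating $k$SAPs, both of which are available exactly for $k=1,2$. Everything reduces to one exponential estimate: I will produce $\epsilon_P>0$ and $\rho<\omega^{(k)}_\tube$ such that, for all large $s$, the number of span-$s$ $k$SAPs containing fewer than $\epsilon_P s$ $x$-translates of $P$ is at most $\rho^{\,s}$. The first displayed inequality follows at once with this $\epsilon_P$, and the length version comes from rerunning the argument with the transfer matrix weighted by the number of edges each span-$1$ block contributes (so $\Omega^{(k)}_\tube$ replaces $\omega^{(k)}_\tube$), or by quoting the general pattern theorem of \cite{Madras99}.

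First I would set up the transfer matrix. As $\tube=\tube_{N,M}$ has bounded cross-section, a span-$s$ $k$SAP is encoded by its sequence of configurations on the span-$1$ slabs between consecutive half-integer planes $x=i+\tfrac12$, together with a finite ``interface state'' at each such plane recording how the $k$SAP crosses it. This yields a finite transfer matrix $T$ with Perron eigenvalue $\lambda:=\omega^{(k)}_\tube$ and $q^{(k)}_{\tube,s}=\Theta(\lambda^{s})$ along the relevant residue classes of $s$; enlarging the state to remember enough recent slab configurations to contain $P$ makes occurrences of $P$ visible to $T$. Because $P$ is a \emph{proper} $k$SAP pattern it lies strictly between the end planes of some span-$t$ $k$SAP, and concatenating that $k$SAP on both sides with arbitrary $k$SAPs places the copy of $P$ in the recurrent bulk; I would use this to pin down a recurrent interface state $v_0$ together with a $v_0$-to-$v_0$ ``excursion'' $\eta_P$ containing a full copy of $P$. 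Cutting any $k$SAP at its $v_0$-planes decomposes it into a prefix, a run of excursions between consecutive $v_0$-planes, and a suffix; the excursions range over a countable family $\mathcal E$ normalised by the renewal identity $\sum_{\eta\in\mathcal E}\lambda^{-\mathrm{span}(\eta)}=1$, and writing $\mathcal A\subseteq\mathcal E$ for those excursions containing a full copy of $P$, the excursion $\eta_P$ witnesses $\alpha:=\sum_{\eta\in\mathcal A}\lambda^{-\mathrm{span}(\eta)}>0$.

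The counting step is then a large-deviation estimate. The $k$SAPs that visit $v_0$ fewer than $c_0 s$ times are exponentially rare by recurrence of $v_0$, so I may restrict to the rest; for those, distinct $\mathcal A$-excursions contain distinct copies of $P$, so fewer than $\ell$ copies of $P$ forces fewer than $\ell$ $\mathcal A$-excursions. Viewing the excursion decomposition as a renewal process in which $\eta$ is chosen with weight $\lambda^{-\mathrm{span}(\eta)}$ and ``marked'' exactly when $\eta\in\mathcal A$, the number of span-$s$ $k$SAPs with fewer than $\ell$ $\mathcal A$-excursions is at most $C\lambda^{s}\,\Pr[\text{fewer than }\ell\text{ marks by span }s]$. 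Marks occur at a positive asymptotic rate, so a Chernoff bound gives, for $\ell=\epsilon s$ with $\epsilon$ small enough, a probability at most $e^{-c(\epsilon)s}$ with $c(\epsilon)>0$; hence the count is at most $C(\lambda e^{-c(\epsilon)})^{s}$, and I take $\epsilon_P=\epsilon$, $\rho=\lambda e^{-c(\epsilon)}<\omega^{(k)}_\tube$. The length case is identical with the edge-weighted matrix.

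The hard part will be making the renewal decomposition honest, and this is exactly where properness of $P$ is used: I need a base state $v_0$ in the \emph{recurrent} part of $T$ for which typical $k$SAPs return to $v_0$ a linear number of times \emph{and} some $v_0$-to-$v_0$ excursion carries an entire copy of $P$ --- properness being precisely the hypothesis that lets $P$ be realised in the bulk, compatibly with that recurrent class, rather than only near a boundary. Two subsidiary points also need care but are routine: $\mathcal E$ is infinite, so the renewal sums must be truncated using the exponential tail $\sum_{\mathrm{span}(\eta)\ge R}\lambda^{-\mathrm{span}(\eta)}\to 0$; and if $T$ is periodic the estimates are run separately on each residue class of $s$ (resp.\ $n$). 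Both are carried out in \cite{Sot98,atapour2010linking}, and an abstract framework covering them is \cite{Madras99}.
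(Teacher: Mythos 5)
This theorem is not proved in the paper at all: it is imported verbatim from \cite{Sot98,atapour2010linking} (with the general framework in \cite{Madras99}), so the comparison is really with the proofs in those references. Your sketch is a legitimate route, but it is a genuinely different one in flavour. The cited proofs follow the Kesten/Madras \emph{insertion} template: assume for contradiction that the polygons with fewer than $\epsilon s$ translates of $P$ have the full growth rate $\omega^{(k)}_\tube$, then use the concatenation of Definition~\ref{def:concat} (and the fact that $P$ is proper, so it can be spliced into the bulk) to insert copies of $P$ in $\binom{\Theta(s)}{\epsilon s}$ ways, overcounting $q^{(k)}_{\tube,s}$ and contradicting superadditivity for small $\epsilon$. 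You instead exploit the one-dimensional geometry of the tube to set up a renewal decomposition at a recurrent interface state and run a Chernoff bound on the density of marked excursions. In a tube both work; your approach is arguably more natural there and would give sharper information (e.g.\ concentration of pattern counts), while the insertion argument is what generalizes beyond settings with a finite transfer matrix.

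Two places where your outline defers real work. First, the transfer matrix for $k$SAPs is not just a record of slab configurations: the interface states must carry the pairing/connectivity data needed to guarantee that the final object closes up into exactly $k$ spanning polygons, and irreducibility (hence the Perron--Frobenius structure, the identity $\sum_{\eta}\lambda^{-\mathrm{span}(\eta)}=1$, and the positive return density to $v_0$) is not automatic --- it is precisely what the concatenation construction is used to establish in \cite{Sot98,atapour2010linking}. You invoke concatenation only to place $P$ in the bulk; it is needed earlier and more heavily. Second, the length version is not obtained by ``reweighting by edges per slab'': span and length are genuinely different size parameters (a span-$s$ $k$SAP can have lengths ranging over an interval of width $\Theta(s)$), so one must either work with the two-variable generating function and its singularity at $\Omega^{(k)}_\tube$, or simply quote \cite{Madras99} as you ultimately do. With those two points filled in, your argument would be a valid alternative proof.
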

Thus, as $s\to\infty$, all but exponentially few span-$s$ $k$SAPs contain the pattern $P$ at least $\epsilon_P s$ times. 
Similarly, as $n\to\infty$, all but exponentially few length-$n$ $k$SAPs contain the pattern $P$ at least $\tilde{\epsilon}_P n$ times.

For the set of knot-types ${\cal K}={\cal L}^1$, Soteros, Sumners and Whittington \cite{SSW92} defined a  good measure of knot complexity to be a function $F:{\cal K}\to [0, \infty)$
which satisfies the following:
(i) $F(unknot) = 0$; (ii) there exists $K\in {\cal K}$ such that $F(\#_{i=1}^{n}K \# L) \geq  nF(K) > 0$ for all $L\in {\cal K}$.
Then, since concatenation for SAPs is a connected sum operation,  a known pattern theorem for unconfined SAPs, allowed them to show that all but exponentially few sufficiently long length-$n$ SAPs would have $F$ complexity $\geq \beta n$ for some $\beta >0$ (see \cite[Corollary 3.2]{SSW92}).

Here, we use tangle products to generalize the concept of a good measure of knot complexity to a good measure of entanglement complexity that is applicable to $k$SAPs. The measures will be defined on  the set of smooth ambient isotopy classes of $k$-component links, denoted by $\mathcal{L}^k$. Instead of connected sums, 
 tangle products will be used. Tangle products are defined in more detail in the next subsection.

\subsection{Tangle Products}\label{Sec:TangleProd}

Concatenation in $k$SAPs is much harder to control than concatenation for polygons. Figure \ref{fig:tangledconcat}  demonstrates that a concatenation of two unlinks does not necessarily produce an unlink. This is in contrast to the connected sum, where the connected sum of two unknots always produces the unknot.

Just as the concatenation of 1SAPs corresponds to the topological operation of connected sum of knots, the concatenation of $k$SAPs corresponds to the operation of tangle product. Next, we develop the definition of tangle product.

A \textit{tangle} is an ordered pair $(B,t)$ where $B$ is a 3-ball and $t\subset B$ is a properly embedded collection of arcs and loops. A \textit{rational tangle} is a tangle $(B,t)$ such that $t$ is a collection of boundary parallel arcs \cite{cromwell2004knots}.

\begin{definition}  
 A graph $G$ is an $n$-star graph if $G$ has $n$ edges and $n+1$ vertices such that $n$ of the vertices are valence one and one of the vertices is valence $n$. Denote by $\partial(G)$ the set of valence one vertices.
\end{definition}

\begin{definition}
\label{def:product}
Let $K_1$ and $K_2$ be links embedded in distinct copies of $S^3$, denoted $S^3_1$ and $S^3_2$ respectively. Let $G_1$ and $G_2$ be $n$-star graphs embedded in $S^3_1$ and $S^3_2$ respectively such that $G_i\cap K_i=\partial(G_i)$. Let $\mu(G_i)$ be a small, closed, regular neighborhood of $G_i$ in $S^3_i$ such that $(\mu(G_i), K_i \cap \mu(G_i))$ is a rational tangle. Let $B_{i}=S^3_i-\text{int}(\mu(G_i))$. A link in $S^3$ obtained by gluing $\partial(B_1)$ to $\partial(B_2)$ via a homeomorphism such that points in $\partial(B_1)\cap K_1$ are mapped to points in $\partial(B_2)\cap K_2$ is called an \textit{$n$-strand tangle product} of $K_1$ and $K_2$ and is denoted by $K_1\ast_{S}K_2$. The image of $\partial(B_1)$ and $\partial(B_2)$ under this identification is called the product sphere and is denoted $S$. (See Figure \ref{fig:figures2} (a) for an example of a 2-strand tangle product.) 
\end{definition}

In particular, connected sums are 1-strand tangle products. In the literature, 2-strand tangle products are also known as \textit{Generalized Conway products}. The bridge number of both Generalized Conway products and tangle products has been previously studied \cite{blair2010bridge,blair2013bridge}. Note that the topological type of a tangle product $K_1\ast_{S}K_2$ is dependent on the topological type of $K_1$, the topological type of $K_2$, the isotopy type of $G_1$ in $(S^3_1, K_1)$, the isotopy type of $G_2$ in $(S^3_2, K_2)$ and the isotopy type of the homeomorphism that maps $\partial(B_1)$ to $\partial(B_2)$. This makes understanding the behavior of topological properties under tangle product much more challenging than under connected sum.

Concatenation of two $k$SAPs in a tube $\tube$ corresponds to a tangle product. To see this, take a $k$SAP $\theta_1$ in a tube $\tube_1$ that we think of as being embedded in a copy of the 3-sphere, $S^3_1$. Similarly, take a $k$SAP $\theta_2$ in a tube $\tube_2\subset S^3_2$. Let $G_1$ be a $k$-star graph embedded to the right of $\theta_1$ in $S^3_1$ that consists of a center vertex and $k$ monotone (with respect to projection onto the $x$-direction) arcs that travel from the center vertex to each of the rightmost edges from $\theta_1$. Similarly, let $G_2$ be a $k$-star graph embedded to the left of $\theta_2$ in $S^3_2$ that consists of a center vertex and $k$ monotone arcs that travel from the center vertex to each of the leftmost edges from $\theta_2$. Then every concatenation of  $\theta_1$ and $\theta_2$ is topologically a tangle product $K_1\ast_{S}K_2$ that utilizes star graphs $G_1$ and $G_2$. Note that the structure of the concatenation determines the isotopy types of $G_1$ and $G_2$, but allows for any isotopy type of the homeomorphism that maps $\partial(B_1)$ to $\partial(B_2)$.

\subsection{Good measures of spanning link complexity in a general setting}\label{sec:linkGoodMeasures}

For good measures of knot complexity, a sequence of connected sums $nK\# L= \#_{i=1}^{n}K \# L$  was considered. In this case, the resulting knot-type does not depend on the order of the connected sums. For the good measures of link complexity introduced here, we need to consider sequences of tangle products.  The sequence of operations is motivated by considering how a sequence of $k$SAPs could be concatenated together.  

For this, a fixed $k$SAP $\theta$ can be concatenated on the right or the left with another $k$SAP. However, one must specify the $k$ rightmost edges from $\theta$ (one from each polygon) and the $k$ leftmost edges from $\theta$ (one from each polygon), respectively, to achieve these concatenations. From the topological perspective, this is equivalent to the link type of $\theta$ coming equipped with the additional information of two embedded $k$-star graphs which define concatenation on the right and concatenation on the left. Thus to obtain an associated sequence of tangle products, we introduce the following definition of \textit{k-tangle-triple}.

\begin{definition}
\label{def:tangle}
    A \textit{k-tangle-triple} is a triple $(L,G_1,G_2)$ where $L$ is a $k$-component link embedded in $S^3$, each $G_i$ is a $k$-star graph embedded in $S^3$ such that $G_1\cap G_2=\emptyset$, $G_i\cap K_i=\partial(G_i)$ for each $G_i$, and each $G_i$ intersects each component of $K$ in exactly one point. Two $k$-tangle-triples $(L^1,G^1_1,G^1_2)$ and $(L^2,G^2_1,G^2_2)$ are equivalent if there is a smooth ambient isotopy of $S^3$ that takes $L^1$ to $L^2$, $G^1_1$ to $G^2_1$ and $G^1_2$ to $G^2_2$. The set of equivalence classes of $k$-tangle-triples is denoted by $\mathcal{L}_T^k$.
\end{definition}

Given two $k$-tangle-triples $(L^1,G^1_1,G^1_2)$ and $(L^2,G^2_1,G^2_2)$ we can define $(L^1,G^1_1,G^1_2)\underline{*}(L^2,G^2_1,G^2_2)$ to be any $k$-tangle-triple given by $(L^1\ast_{S}L^2,G^1_1,G^2_2)$ where $L^1\ast_{S}L^2$ is any tangle product formed by taking $G^1_2$ as the $k$-star graph for $L^1$ and $G^2_1$ as the $k$-star graph for $L^2$. As discussed in Section \ref{Sec:TangleProd},  the link type of $L^1\ast_{S}L^2$ is dependent on the isotopy class of the attaching homeomorphism from the boundary of a regular neighborhood of $G^1_2$ to the boundary of a regular neighborhood of $G^2_1$. Note that we restrict which attaching homeomorphisms we allow, so that $(L^1\ast_{S}L^2,G^1_1,G^2_2)$ continues to meet the definition of $k$-tangle-triple (i.e. $L^1\ast_{S}L^2$ is still a $k$-component link). This restriction includes all homeomorphisms that result in a component-wise connected sum of $L^1$ and $L^2$. Hence, $(L^1\ast_{S}L^2,G^1_1,G^2_2)$ may correspond to infinitely many distinct $k$-tangle-triples, depending on the attaching homeomorphism. In the statements that follow, we abuse notation slightly and let $(L^1\ast_{S}L^2,G^1_1,G^2_2)$ denote some fixed $k$-tangle-triple constructed from some arbitrary attaching homeomorphism. Additionally, we will sometimes suppress the additional notation and refer to $(L^1\ast_{S}L^2,G^1_1,G^2_2)$ as $L^1\underline{*}L^2$. This allows us to define a good notion of link complexity. When comparing the operation $\underline{*}$ on $k$-tangle-triples to the operation of concatenation on $k$SAPs, $L^1$ is playing the role of $\theta_1$, $L^2$ is $\theta_2$, $G^1_2$ is the ``right'' of $\theta_1$, $G^2_1$ is the ``left'' of $\theta_2$, the $k$ arcs of $L^1$ in a neighborhood of $G^1_2$ are the $k$ rightmost edges removed from $\theta_1$, the $k$ arcs of $L^2$ in a neighborhood of $G^2_1$ are the $k$ leftmost edges removed from $\theta_2$, and  $L^1\ast_{S}L^2$ is the topological type of the $k$SAP that results from concatenating $\theta_1$ and $\theta_2$.

Recall that $\mathcal{L}^k$ is the set of smooth ambient isotopy classes of $k$-component links. Given any link complexity function $F:\mathcal{L}^k\rightarrow [0,\infty)$ we can expand $F$ to a function $F:\mathcal{L}_T^k\rightarrow [0,\infty)$ by declaring 
$F((L,G_1,G_2))=F(L)$. Given a knot $K\in \mathcal{L}^1$, we define $(L(K),G_1,G_2)\in \mathcal{L}_T^k$ as in Figures \ref{fig:figures3} (a) and \ref{fig:L(K)}. Notice that the link $L(K)$ is the result of taking a $k$-component unlink and connected summing a copy of $K$ to a single component.

\begin{figure}[h]
\begin{subfigure}{0.45\textwidth}
    \labellist
\small
\pinlabel{$G_1^1$} at -19 92
\pinlabel{$G_2^1$} at 290 49
\pinlabel{$K^1$} at 153 192
\pinlabel{$G_1^2$} at 353 49
\pinlabel{$G_2^2$} at 670 92
\pinlabel{$K^2$} at 500 192
\pinlabel{$K^1$} at 947 192
\pinlabel{$K^2$} at 1290 192
\endlabellist\includegraphics[width=\textwidth]{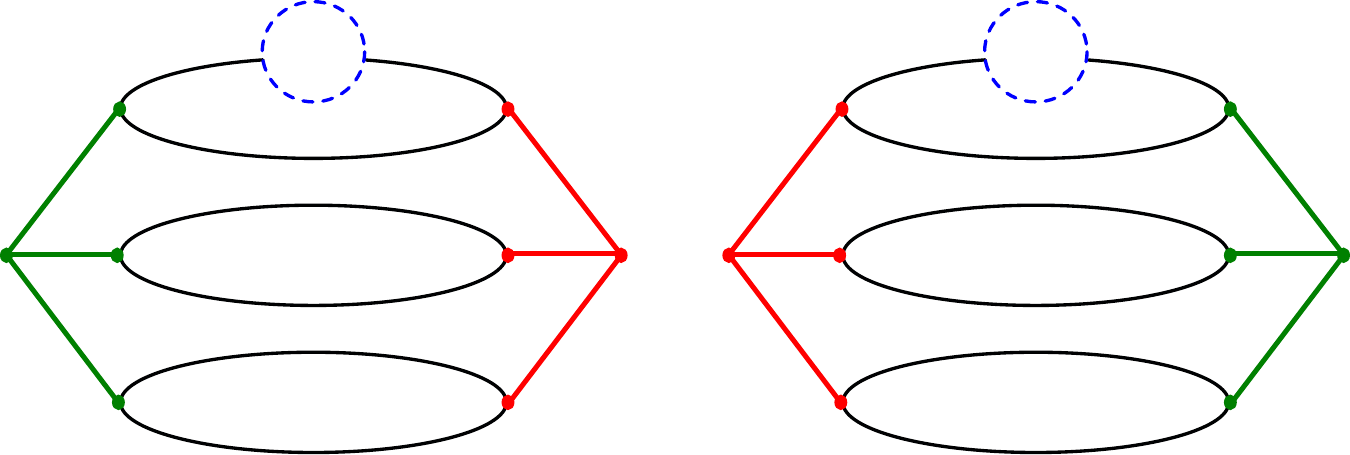}
    \caption{}
    \label{fig:withstar}
\end{subfigure}
\hfill
\begin{subfigure}{0.45\textwidth}
    \includegraphics[width=\textwidth]{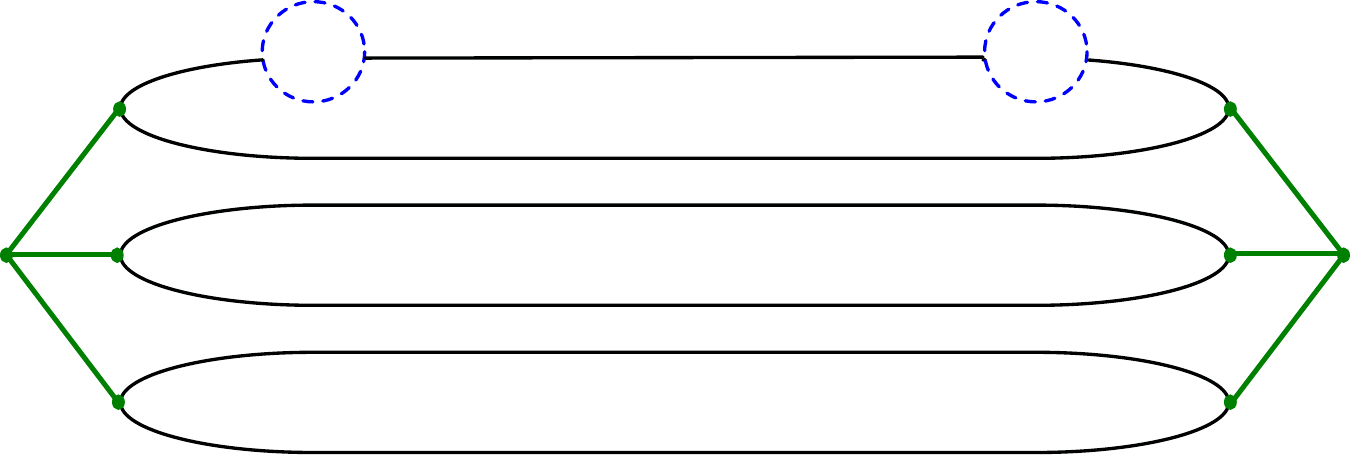}
    \caption{}
    \label{fig:concatstar}
\end{subfigure}  
\caption{(a) Two $3$-tangle-triples $(L(K^1),G_1^1,G_2^1)$ and $(L(K^2),G_1^2,G_2^2)$ where $K^i$ for $i=1,2$ is any knot. (b) A result of performing a 3-strand tangle product to get $(L(K^1\#K^2),G_1^1,G_2^2)$.}
\label{fig:L(K)}
\end{figure}

\begin{definition}
 A \textit{good measure of $k$-component spanning link complexity} is a function $F:\mathcal{L}^k\rightarrow [0,\infty)$ which satisfies the following:
(i) $F(k$-component unlink) = 0; (ii) there exists $L \in \mathcal{L}_T^k$ such that for any integer $m$ and any $J_1, J_2,...,J_{m+1}\in \mathcal{L}_T^k$, 
$F(J_1\underline{*} L \underline{*} J_2 \underline{*} L \underline{*} J_3 \underline{*}...\underline{*}J_{m}\underline{*}L\underline{*}J_{m+1}) 
\geq \frac{m}{k}F(L) > 0$. \label{def:goodmeasure}

\end{definition}

Note that by this definition a good measure of $1$-component spanning link complexity  coincides with the definition of a good measure of knot complexity given in \cite{SSW92}. Hence, we will refer to a good measure of $1$-component spanning link complexity as a good measure of knot complexity.

\LinkMeasureToGoodMeasure*
\begin{proof}
    By hypothesis 3, $F:\mathcal{L}^k\rightarrow [0,\infty)$ is a good measure of $k$-component spanning link complexity when $k=1$.
    
    Assume $k\geq 2$. Since $F:\mathcal{L}^1\rightarrow [0,\infty)$ is a good measure of knot complexity. There exists $K \in \mathcal{L}^1$ such that $F(\#_{i=1}^{m}K\#J) \geq mF(K) > 0$ for all $J\in \mathcal{L}^1$. Suppose that $J_1, J_2,...,J_{m+1}\in \mathcal{L}_T^k$ and $M\in \mathcal{L}_T^k$ has a decomposition of the form $J_1\underline{*} L(K) \underline{*} J_2 \underline{*} L(K) \underline{*} J_3 \underline{*}...\underline{*}J_{m}\underline{*}L(K)\underline{*}J_{m+1}$. Then there exists $M'\in \mathcal{L}_T^k$ of the form $J_1 \underline{*} J_2 \underline{*} J_3 \underline{*}...\underline{*}J_{m}\underline{*}J_{m+1}$ such that each copy of $L(K)$ in the decomposition of $M$ corresponds to connected summing a single copy of $K$ to some component of $M'$ to form $M$. In particular, some component $C_i$ of $M$ has a connected sum decomposition containing at least $\lceil \frac{m}{k} \rceil$ copies of $K$. By hypothesis 4, $F(M)\geq F(C_i)$. By definition of good measure of knot complexity, $F(C_i)\geq \lceil \frac{m}{k} \rceil F(K)\geq \frac{m}{k} F(K)>0$. By hypothesis 2, $F(K)=F(L(K))$. Hence, $F(M)\geq \frac{m}{k} F(L(K))>0$.  Since $M$ was arbitrary, $F(J_1\underline{*} L \underline{*} J_2 \underline{*} L \underline{*} J_3 \underline{*}...\underline{*}J_{m}\underline{*}L\underline{*}J_{m+1}) 
\geq \frac{m}{k}F(L)>0.$

\end{proof}

From the arguments in \cite[Theorem 3.3]{SSW92}, the following knot invariants give a good measure of knot complexity: number of prime
factors, genus, bridge number minus one, span of any non-trivial Laurent knot
polynomial, log(order), crossing number, unknotting number, minor index, braid
index minus one (for definitions, see \cite{SSW92}).   

At least four of these knot invariants can be modified slightly to satisfy Theorem \ref{thm:LinkMeasureToGoodMeasure}. We now list these invariants, together with brief reasoning. In the following list, condition 3 of Theorem \ref{thm:LinkMeasureToGoodMeasure} holds due to \cite{SSW92}.
\begin{enumerate}
    \item \textbf{Crossing number}: We observe that the unlink admits a diagram with no crossings. Hence, condition 1 of Theorem \ref{thm:LinkMeasureToGoodMeasure} is satisfied. Next, if the link is the split union of an unlink with another link $L'$, then we can produce an isotopy after which there is a projection of the link to a crossing-less diagram for the unlink part. Thus, condition 2 holds as the crossing number contribution will come from the $L'$ components. The crossing number satisfies condition 4 because a diagram of any component knot appears in a diagram of the link. Therefore, the minimum number of crossings needed to represent the entire link cannot be less than the crossing number of the component knot. 
    \item \textbf{Bridge number minus the number of components} or  \textbf{Braid index minus the number of components}: It is well-known that both the bridge number and the braid index of a $k$-component unlink is exactly $k$. Subtracting $k$ will give condition 1. To see that condition 2 holds, suppose that $L'$ has $r$ components and $U$ has $s$ components. Then, for example, $F(L')=\beta(L')-r$, which is the same as $F(L)= \beta(L)-(r+s)=\beta(L')+s-(r+s)$. Here, the last equality is due to the fact that the unlink contributes $s$ to the bridge number. A similar argument holds for braid index. To see that condition 4 holds for $F$ being either one of these invariants, let $K$ be any knot and consider any link $L$ with $k$ components, $K$, $L_1$, ... ,$L_{k-1}$. Observe that $F(K\cup L_1 \cup L_2 \cup \cdots \cup L_{k-1})\geq F(K)+F(L_1)+\cdots + F(L_{k-1}) \geq F(K).$
      \item \textbf{Unlinking number}: Recall that the {unlinking number} is the minimum number of crossing changes over all diagrams of $L$ that turns $L$ into the unlink. It is obvious why condition 1 and 2 are met. For condition 4, to unlink $L$, one has to turn $K$ into the unknot as well. Thus for $F$ corresponding to unlinking number,  we need to perform at least $F(K)$ crossing changes to unlink $L$ and hence $F(L)\geq F(K)$. 
\end{enumerate}
 The examples below show some invariants that do not satisfy Theorem \ref{thm:LinkMeasureToGoodMeasure}, but will be shown to be good measures via different methods as part of the proof of Theorem \ref{thm:moregoodmeasures}  in Section \ref{subsec:additional}.
\begin{enumerate}
    \item \textbf{Genus}: {Recall that the {genus} of a link is the the minimum genus over all connected orientable surfaces bounded by the link. To see that taking $F$ to be genus does not satisfy condition 4 of Theorem \ref{thm:LinkMeasureToGoodMeasure}, consider taking a link formed from an index-2 cable of a knot $K$ with high genus. This parallel link bounds an annulus, implying that $F(L) = 0$. On the other hand, $K$ is still a component of $L$ and $F(K)$ can be arbitrarily large. This argument also shows that the \textbf{Minor index} does not satisfy condition 4 as the minor index is related to the size of the Seifert matrix, which is obtained from curves on the surface bounded by the link. So, if the genus of the link is lower than the genus of a component knot, then the minor index of $L$ can be less than that of $K$.}
    \item \textbf{Number of prime factors}: {Taking $F$ to be the number of prime factors does not satisfy condition 4 of Theorem \ref{thm:LinkMeasureToGoodMeasure}. An example to demonstrate this  can be seen in Figure \ref{fig:exterior}. Condition 4 says that for any knot $K$ and for any link $L$ with component knot $K$, $F(L)\geq F(K)$. In the case depicted in Figure \ref{fig:exterior}, $F(L) = 0$, but $F(K)$ is $1$.}
    \item \textbf{Number of $p$-colorings}: (See subsection \ref{subsec:additional} for more details.) {The number of $p$-coloring does not satisfy properties for Theorem \ref{thm:LinkMeasureToGoodMeasure}. The example in Figure \ref{fig:pcoloringfails} shows why this complexity violates condition 4. Condition 4 says that for any knot $K$ and for any link $L$ with component knot $K$, $F(L)\geq F(K)$. For each link $L$ depicted in Figure \ref{fig:pcoloringfails}, $F(L) = 9$, but $F(K)$ is $3^{m+1},$ where $m$ is the number of trefoil summands in the component $K$.}
    \item \textbf{Span of the Alexander polynomial and $\log$(order)}: Depending on the link polynomial invariant, the span of the polynomial may not satisfy properties in Theorem \ref{thm:LinkMeasureToGoodMeasure}. For instance, suppose that $L$ is a split union where a component knot $K$ has large Alexander polynomial span. Since the Alexander polynomial of a split link is 0, it follows that  $F(L)< F(K)$. Since the $\log$(order) is determined by the Alexander polynomial. This same example shows that $\log$(order) fails condition 4.
\end{enumerate}

\begin{figure}[h]
\centering
\begin{subfigure}{0.3\textwidth}
 \includegraphics[width=\textwidth]{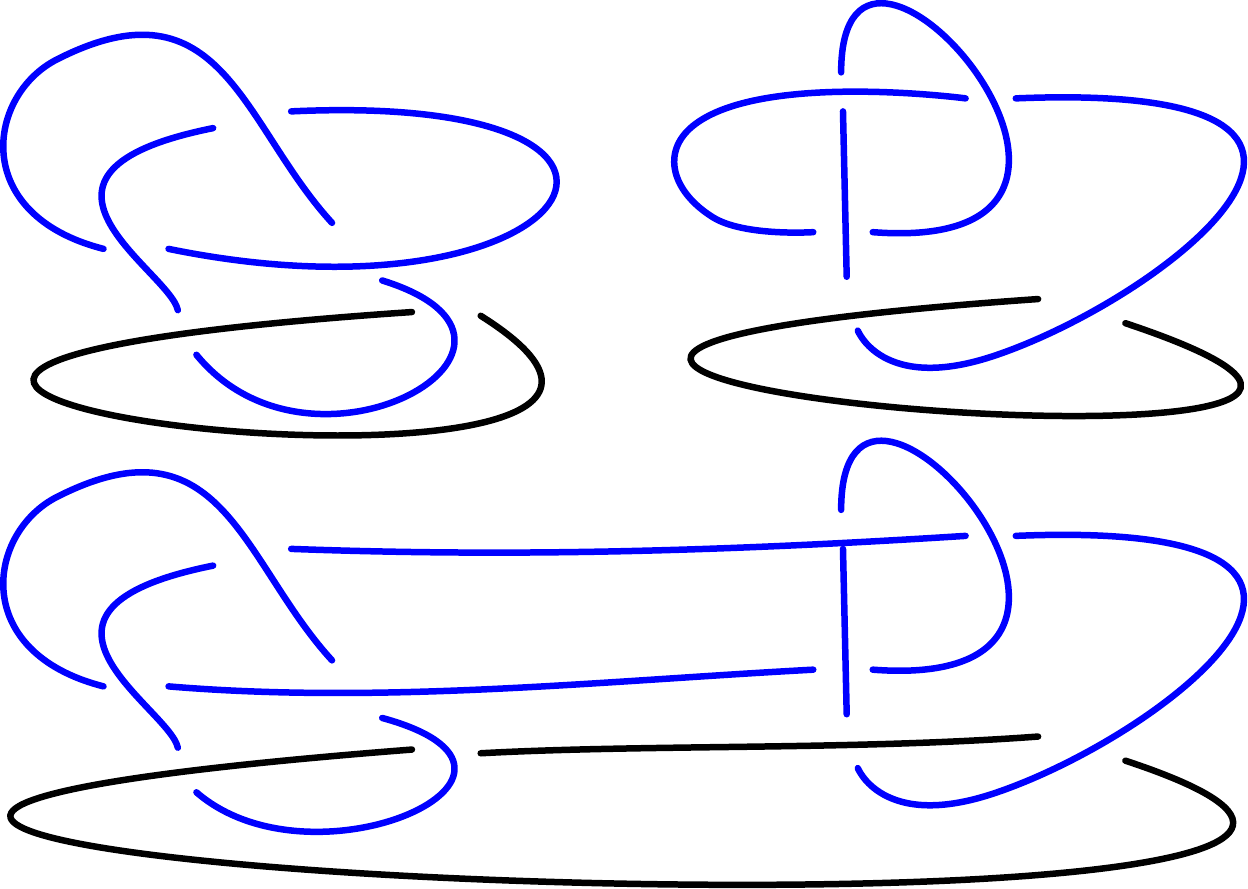}
    \caption{}
    \label{fig:exterior}
\end{subfigure}
\hspace{.5cm}
\begin{subfigure}{0.3\textwidth}
    \includegraphics[width=\textwidth]{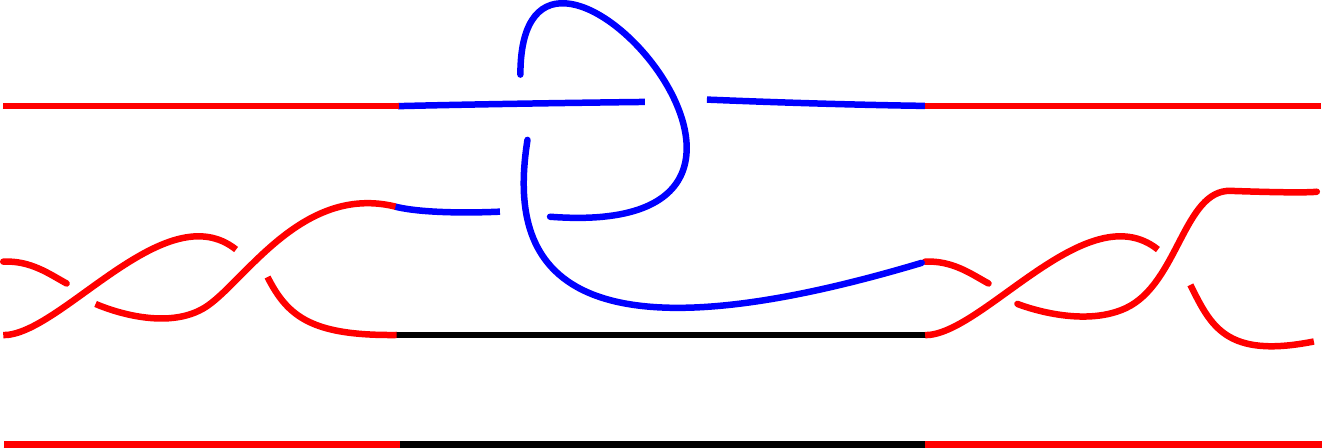}
    \caption{}
    \label{fig:nonlocal}
\end{subfigure}  
\caption{(a) The number of prime knot factors may decrease under concatenation. The resulting link after concatenation is the prime link $L10a38$ from Thistlethwaite's table of prime links. (b) A closure of this pattern is a prime link $L7a2.$}
\label{fig:exteriorandnonlocal}
\end{figure}

\begin{figure}[ht!]
\centering
\includegraphics[width=3cm]{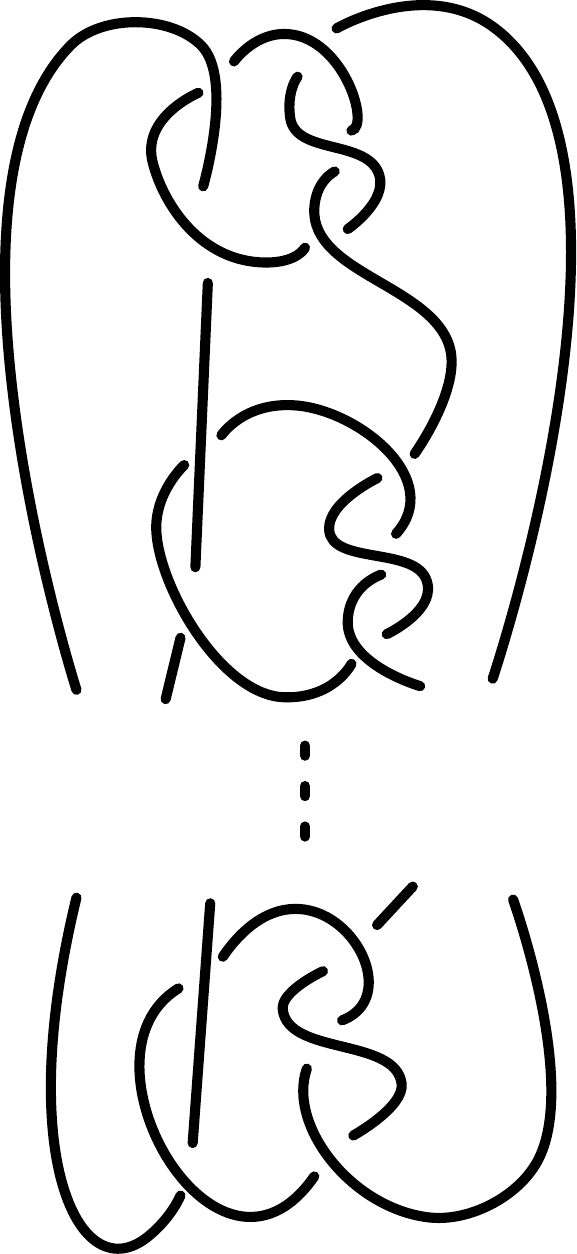}
\caption{A family of 2-component links that shows that the number of $p$-colorings fails condition 4 of Theorem \ref{thm:LinkMeasureToGoodMeasure}. Each link $L$ in this family has one component the unknot and the other, $K$, is a connected sum of $m\geq 1$ trefoils. Such an $L$ can be shown to have $9$ distinct 3-colorings. Additionally, $K$ has  $3^{m+1}$ distinct 3-colorings.}
\label{fig:pcoloringfails}
\end{figure}

\goodmeasure*
\begin{proof}
We will verify conditions 1-4 of Theorem \ref{thm:LinkMeasureToGoodMeasure}. Suppose $F_1:\mathcal{L}^1\rightarrow [0,\infty)$ is a good measure of knot complexity. If $U$ is an unlink, then $F(U)=max_{K\subset U} F_1(K)=F_1($unknot$)=0$, satisfying condition 1. To see that condition 2 holds, suppose $L=L'\cup U$ is a split link with unlink $U$. Then, $F(L)=\max(F(L'),F(U)) = \max(F(L'),0)=F(L')$. Here, the second equality holds because we showed that $F$ satisfies condition 1. For condition 3, $F$ restricted to $\mathcal{L}^1$ is $F_1$ which is given to be a good measure of knot complexity. Finally, condition 4 also holds because $F(L)=max_{K\subset L} F_1(K)\geq F_1(K_0)=F(K_0)$ for every component $K_0$ of $L$.

\end{proof}

By applying the arguments in \cite[Theorem 3.3]{SSW92} and Corollary \ref{cor:thm1}, taking the component-wise maximum of each of the following knot invariants gives a good measure of $k$-component spanning link complexity: number of prime
factors, genus, bridge number minus one, span of any non-trivial Laurent knot
polynomial, log(order), crossing number, unknotting number, minor index, braid
index minus one.

In the next subsection we prove Theorem \ref{thm:moregoodmeasures} and show, for example, that the number of $p$-colorings is a good measure.  We start by reviewing  necessary definitions.

\subsection{Additional good measures of $k$-component spanning link complexity} \label{subsec:additional}

Arguably, the most basic and computable invariant to distinguish links is tricolorability \cite{adams2004knot}, which is a special case of Fox $p$-colorings \cite{fox1962quick}. For this, one assigns a color to each strand in a link diagram with up to  three distinct colors, obeying some given rules. By a \textit{strand} in a diagram of a link, we mean a connected component of the link diagram that goes from one undercrossing to another with only overcrossings in
between. We will say that a diagram of a link is \textit{tricolorable} if
each of the strands can be colored using one of the three different
colors, so that at each crossing, either three different colors come together
or all the same color comes together. For a link diagram to be tricolorable, we further require that at least two distinct colors are used. In more generality, a diagram of a link is Fox $p$-colorable (or just \emph{$p$-colorable}) if each strand can be given a label from $\{0,1,\cdots, p-1\}$ such that the relation $z=2y-x \mod p$ holds at every crossing, where $y$ is the overstrand and $x,z$ are the two understrands. We again require that at least two distinct labels appear in a diagram. Having a $p$-colorable link diagram is a link invariant for $p\geq 3$. Moreover the number of $p$-colorings of a link diagram is a link invariant, denoted $Col_p(L)$, where monochromatic colorings are included in this count.

For example, the unlink on at least 2 components and the link $L6a1$ are tricolorable (see Figure \ref{fig:tricol}), while the Hopf link and the Whitehead link are not (see arguments in \cite{adams2004knot}). Additionally,  $Col_p(L)=p^k$ when $L$ is a $k$-component unlink.

\begin{figure}[h]
\centering
\includegraphics[width=3cm]{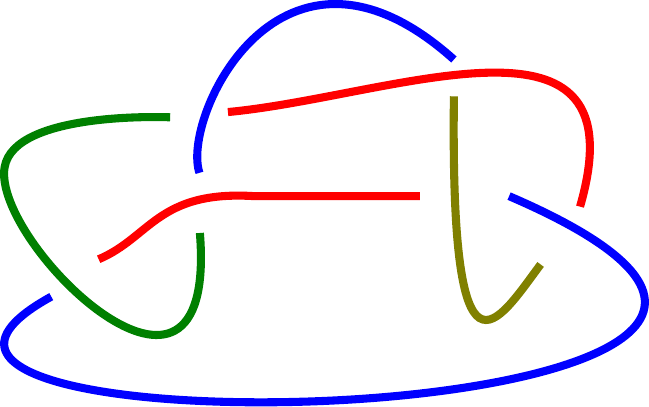}
\caption{The link $L6a1$ is tricolorable}
\label{fig:tricol}
\end{figure}

This coloring idea can be generalized from using integers modulo $p$ to using elements of any group. More precisely, one can show that a link is tricolorable if there exists a surjective homomorphism from the fundamental group of the link exterior $\pi L$ to the 6 element dihedral group. Similarly, we can obtain $Col_3(L)$ by counting the number of homomorphisms from $\pi L$ to the 6 element dihedral group where meridians of the link are sent to reflections (see Example 1.1 of \cite{eisermann2007knot}). More generally, one can define a link invariant by counting the number of homomorphisms from $\pi L$ to any group.

Using a different argument than that in Theorem \ref{thm:LinkMeasureToGoodMeasure}, we prove the following result that applies to another class of link invariants including $p$-colorability.

\moregoodmeasures*

 \begin{proof}
 We only need to check condition (ii) in the definition of a good measure. That is, we show that there exists $L \in \mathcal{L}_T^k$ such that for any integer $m$ and any $J_1, J_2,...,J_{m+1}\in \mathcal{L}_T^k$, $F(J_1\underline{*} L \underline{*} J_2 \underline{*} L \underline{*} J_3 \underline{*}...\underline{*}J_{m}\underline{*}L\underline{*}J_{m+1}) 
\geq \frac{m}{k}F(L) > 0$.

Take $K$ to be a knot that satisfies $\emph{(2)}$ in the statement of the theorem. Let $L$ be $(L(K),G_1,G_2)\in \mathcal{L}_T^k$ as in Figure \ref{fig:L(K)separated}. Let $m$ be a positive integer and $J_1, J_2,...,J_{m+1}\in \mathcal{L}_T^k$. Suppose $M\in \mathcal{L}_T^k$ has a decomposition of the form $J_1\underline{*} L(K) \underline{*} J_2 \underline{*} L(K) \underline{*} J_3 \underline{*}...\underline{*}J_{m}\underline{*}L(K)\underline{*}J_{m+1}$. Then the number of knot-type $K$ factors of $M$ is at least $m.$ By the assumption $\emph{(2)}$, we have that $F(M)\geq  mF(L)\geq \frac{m}{k}(F(L))$. Since $M$ was arbitrary, $F(J_1\underline{*} L \underline{*} J_2 \underline{*} L \underline{*} J_3 \underline{*}...\underline{*}J_{m}\underline{*}L\underline{*}J_{m+1}) 
\geq \frac{m}{k}F(L).$

\end{proof}

To demonstrate Theorem \ref{thm:moregoodmeasures}, note that the genus, the number of prime factors and the span of the Alexander polynomial are all zero for the unknot. Additionally, all three of these invariants are additive with respect to connected sum. Hence, Theorem \ref{thm:moregoodmeasures} implies they are good measures of $k$-component spanning link complexity when we choose $K$ to be the trefoil, for instance. Note that in the cases of the genus and the number of prime factors any non-trivial knot $K$ will do.

In the case of $p$-colorings, we fix $k\geq1$ take $F$ to be $\log_p(Col_p(L))$ minus the number of components of $L$, where $Col_p(L)$ is the number of Fox $p$-colorings of $L.$ Recall that $Col_p(L)=p^k$ when $L$ is a $k$-component unlink. Hence, $\log_p(Col_p(L))-k=0$ when $L$ is a $k$-component unlink. Take $K$ to be a $T(2,p)$ torus knot. It is well known that $Col_p(T(2,p))=p^2$. Thus, $Col_p(L(K))$ is the number of Fox $p$-colorings of the split union of a $(k-1)$-component unlink with the $T(2,p)$ torus knot, which is $p^{(k+1)}$. Taking $\log$ and subtracting by the number of components gives condition $\emph{(2)}$ of Theorem \ref{thm:moregoodmeasures}.

It is important to note that both the proof of Theorem \ref{thm:LinkMeasureToGoodMeasure} and the proof of Theorem \ref{thm:moregoodmeasures} rely on producing a particular type of $L \in \mathcal{L}_T^k$ to meet criteria (ii) in the definition of a good measure of $k$-component spanning link complexity. Specifically, each proof relies on choosing $L=L(K)$ for some suitably chosen knot $K$. Consequently, the link complexities that Theorem \ref{thm:LinkMeasureToGoodMeasure} and Theorem \ref{thm:moregoodmeasures} can show are good measures will only be complexities that increase linearly as the complexity of the prime decompositions of each component increases. However, Definition \ref{def:goodmeasure} can produce good measures of $k$-component spanning link complexity even when the $L$ in criteria (ii) is not of the form $L(K)$ for any $K$. For example, we can take $L$ to be one of the 2-tangle-triples which gives rise to the tangle in Figure \ref{fig:Incomp4}. We remind the readers of the following widely-studied invariant.

\begin{definition}
    The \textit{splitting number} of a link is the minimum number of crossing
changes between distinct components that is required to convert the link into
a split link.
\end{definition}

In what follows, we will enhance the tangle used in \cite{Atapour10} to guarantee that a link is not split no matter how the endpoints of the tangle connect to the rest of the link. We also generalize Theorem 6.3 of \cite{Atapour10} so that any inseparable tangle may be used. We will make use of a criterion originally due to Nakanishi \cite{nakanishi1981primeness,nakanishi1983prime}. Recall that a tangle $(C,v)$ is inseparable if there is no properly embedded disk in $C$ that is disjoint from $v$ and separates components of $v$. 

\begin{lemma}[\cite{nakanishi1981primeness,nakanishi1983prime}]
    \begin{enumerate}
\item[(1)]Let $(C,v)$ be a $2$-string tangle and let $D$ be a disk properly embedded in $C$ that intersects both arcs of $(C,v)$ in a single point each and separates $(C,v)$ into two $2$-string tangles $(A,t)$ and $(B,u)$. Let $(B,u)$ be an inseparable tangle. Suppose that for any disk $D'$ properly embedded in $A$ with
$D' \cap \partial D = \emptyset$ and
$D' \cap t = \emptyset
$, it holds that $D'$ does not separate $t$, then $(C,v)$ is an inseparable tangle.

\item[(2)] Let $L$ be a link in $S^{3}$. Suppose that $S$ is a $2$-sphere in $S^{3}$ meeting $L$ transversely in $4$ points and dividing $(S^{3},L)$ into two $2$-string tangles $(A, A\cap L)$ and $(B, B\cap L)$. Then if the two tangles $(A, A\cap L)$ and $(B, B\cap L)$ are inseparable, then $L$ is non-split.
\end{enumerate}\label{lem:Nakanishi}
\end{lemma}

\begin{prop}
        Let $(C_1,t_1)$ and $(C_2,t_2)$ be inseparable tangles. If the tangle diagram in Figure \ref{fig:anynonsplit} appears in a diagram of a 2-component link $L$ in such a way that the solid arcs belong to the same link component and the dotted arcs belong to the other component, then $L$ is non-split. \label{prop:guaranteesnonsplit}
\end{prop}
\begin{proof}
    Let $S$ be the 2-sphere represented by the dashed red-blue oval that contains both $(C_1,t_1)$ and $(C_2,t_2)$ on the inside. In Figure \ref{fig:anynonsplit}, the sphere $S$ is represented as being made up of two disks colored gray and blue. We will use Nakanishi's result that tangle sum $(C_1,t_1) \cup (cl(S^3
\backslash C_1), cl(S^3
\backslash C_1) \cap L)$ is a non-split link \cite{nakanishi1981primeness}. The tangle $(C_1,t_1)$ is inseparable by assumption. We will show that the 2-string tangle $(cl(S^3
\backslash C_1), cl(S^3
\backslash C_1) \cap L)$ is also inseparable using part (1) of Lemma \ref{lem:Nakanishi}.

The tangle $(cl(S^3
\backslash C_1), cl(S^3
\backslash C_1) \cap L)$ can be subdivided by the disk $D=cl(S \cap (S^3
\backslash C_1))$ into two tangles. In Figure \ref{fig:anynonsplit}, the disk $D$ is represented as the lighter gray dashed arc on the left side of the figure. One of the tangles after the subdivision is $(C_2,t_2)$, which is inseparable by assumption. Let $(A,t)$ be the tangle to the outside of $S$ in Figure \ref{fig:anynonsplit}. Suppose that there is a disk $D'$ that separates the two arcs of $(A,t)$. Such a disk $D'$ must intersect $D$ since the two points of $\partial t$ on $D$ belong to different components of $t$. Therefore, by
part (1) of Lemma \ref{lem:Nakanishi}, the tangle $(cl(S^3
\backslash C_1), cl(S^3
\backslash C_1) \cap L)$ is also inseparable. By
part (2) of Lemma \ref{lem:Nakanishi}, we conclude that $L$ is non-split.

\end{proof}
\begin{figure}[h]
\centering
    \labellist
\small\hair 2pt

\pinlabel{$t_1$} at 249 85

\pinlabel{$t_2$} at 101 85
\pinlabel{$C_1$} at 291 150
\pinlabel{$C_2$} at 55 150

\endlabellist
\includegraphics[width=4cm]{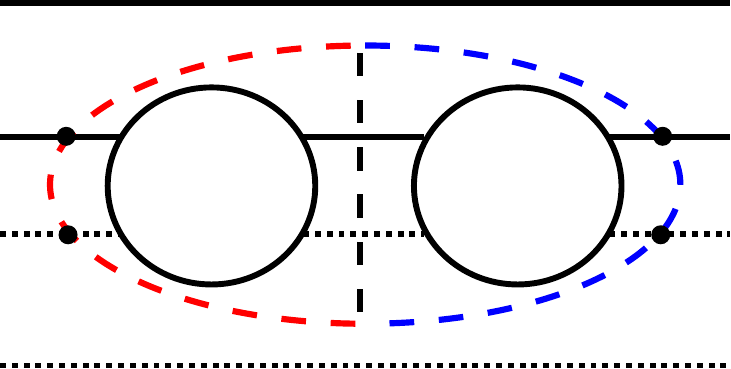}
\caption{This figure shows a 4-string tangle. Let $(C_1,t_1)$ and $(C_2,t_2)$ be inseparable 2-string tangles. If a link contains the 4-string tangle in this figure, where the solid lines belong to the same component, then the link is non-split.}
\label{fig:anynonsplit}
\end{figure}

Theorem 6.3 of \cite{Atapour10} is a special case of Proposition \ref{prop:guaranteesnonsplit}, where the tangles $(C_1,t_1)$ and $(C_2,t_2)$ used by the authors are both the granny tangle. However, with the result below, more tangles such as the ones in \cite{kanenobu2003tangles} can be exploited.

By a tangle diagram bounded by a colored circle, we think of the entire diagram lying on a plane, and we consider the bounded region with respect to that circle. We also remark that the tangle diagram bounded by the red circle and the tangle diagram bounded by the blue circle in Figure \ref{fig:Incomp4} satisfies Proposition \ref{prop:guaranteesnonsplit}, where $(C_1,t_1)$ is the tangle $5_1$ and $(C_2,t_2)$ is the tangle $6_3$ from Kanenobu's table of prime tangles \cite{kanenobu2003tangles}.
\begin{prop}\label{prop:non-split}
    Let $L$ be a 2-component link. Assume that there exists a 2-sphere
$S$ intersecting each component of $L$ in 4 points such that $S$ bounds the tangle in Figure \ref{fig:Incomp4}
on one side, then $L$ is a non-split link.
\end{prop}
\begin{proof}
    Note that the points labeled $a$ and 7 cobound an arc. Similarly each of the pairs $\{b,5\}$, $\{c,6\}$ and $\{d,8\}$ cobound arcs. If $a$ and $b$ belong to the same link component, then $c$ and $d$ belong to the other link component and $L$ is non-split, by Proposition \ref{prop:guaranteesnonsplit} applied to the tangle diagram bounded by the red circle. If $b$ and $c$ belong to the same link component, then $1$ and $2$ belong to the same link component and $3$ and $4$ belong to the other link component. Hence, we can apply Proposition \ref{prop:guaranteesnonsplit} to the tangle diagram bounded by the blue circle to conclude that $L$ is non-split. Finally, if $a$ and $c$ belong to the same link component, we can do a rotation of the middle two arcs of the tangle bounded by the red circle (i.e. the tangle sum of $5_1$ and $6_3$) so that after the rotation $a$ and $c$ become adjacent in the diagram and Proposition \ref{prop:guaranteesnonsplit} can be applied to show $L$ is non-split.
\end{proof}
\begin{prop}\label{Prop:splittingGood}
    The splitting number is a good measure of 2-component spanning link complexity. Furthermore, for this measure, the $L$ in criteria (ii) of Definition \ref{def:goodmeasure} cannot be of the form $L(K)$ for any $K$.
\end{prop}
\begin{proof}
    Let $F(L)$ denote the splitting number of $L$. Since the unlink is a split link, its splitting number is zero satisfying condition (i) of Definition \ref{def:goodmeasure}. Let $\widehat{L}$ be the link in Figure \ref{fig:closedlink} and set $L=(\widehat{L},G_1,G_2)$. Then, $L$ is non-split as a link by Proposition \ref{prop:non-split}. We claim that for any integer $m$ and any $J_1, J_2,...,J_{m+1}\in \mathcal{L}_T^2$, 
$F(J_1\underline{*} L \underline{*} J_2 \underline{*} L \underline{*} J_3 \underline{*}...\underline{*}J_{m}\underline{*}L\underline{*}J_{m+1}) 
\geq mF(L) > 0$. To see this note that a single crossing change at the highlighted crossing in Figure \ref{fig:closedlink} turns $L$ into a split link. Hence, $F(L)=F(\widehat{L})=1$.  By Proposition \ref{prop:non-split}, each instance of $L$ requires at least one crossing change to split the total link. 
\end{proof}

\begin{figure}[h]
\centering
    \labellist
\small\hair 2pt
\pinlabel{$1$} at 240 280
\pinlabel{$2$} at 243 199
\pinlabel{$3$} at 247 132
\pinlabel{$4$} at 280 28
\pinlabel{$5$} at 513 299
\pinlabel{$6$} at 520 252
\pinlabel{$7$} at 531 172
\pinlabel{$8$} at 520 59
\pinlabel{$a$} at 13 289
\pinlabel{$b$} at -9 112
\pinlabel{$c$} at -11 64
\pinlabel{$d$} at 19 30
\pinlabel{$e$} at 169 253
\pinlabel{$f$} at 149 215
\pinlabel{$g$} at 170 95
\pinlabel{$h$} at 149 30

\endlabellist
\includegraphics[width=7cm]{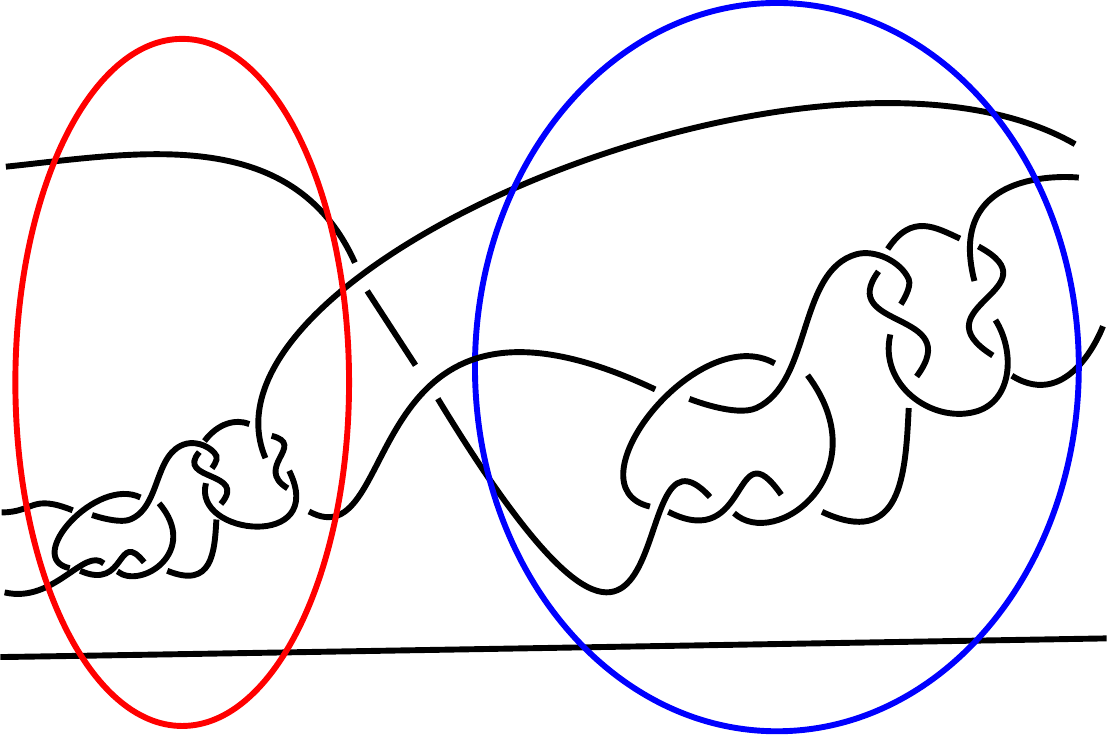}
\caption{If a link contains this tangle, then the link is non-split.}
\label{fig:Incomp4}
\end{figure}

\begin{figure}[h]
\centering
\includegraphics[width=8cm]{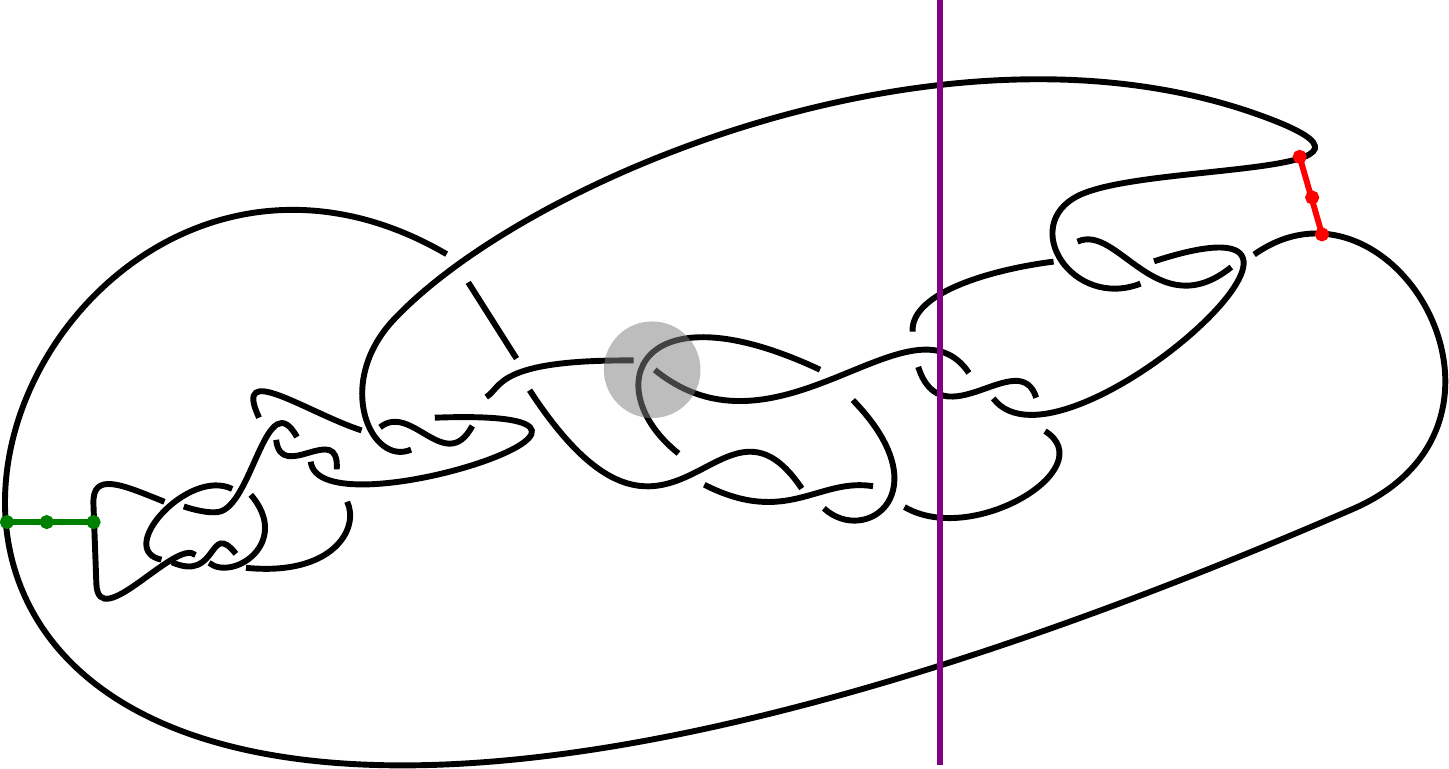}
\caption{The 2-component link $\widehat{L}$ is obtained from the tangle in Figure \ref{fig:Incomp4} by connecting endpoints labeled $a$ with $d$ and then $b$ with $c$. The equal height trunk of this link is 6. The purple line represents a level sphere that intersects the link 6 times, and there are no other level spheres that intersect the corresponding Morse embedding in more than 6 times. Also, changing the highlighted crossing turns the link into a split link.}
\label{fig:closedlink}
\end{figure}

\subsection{Entanglement complexity of  2SAPs.}
\label{sec:2SAPcomplexity}

The pattern theorem for 2SAPS of Theorem \ref{patternthm} implies the following result.

\exprare*
\begin{proof}

Consider $\tube$ and $P_L$ as in the theorem statement. 
Suppose $P_L$ has span $s$. From Theorem \ref{patternthm}, there exist positive $\epsilon_L$ and integer $m_L> (s+1)/\epsilon_L=1/\hat{\epsilon}_L$, such that for sufficiently large $m>m_L$, all but exponentially few size $m$ 2SAPs in $\tube$ contain at least $\lfloor \epsilon_Lm\rfloor$ translates of $P_L$.  
Since $P_L$ has span $s$, at least $\lfloor \epsilon_Lm\rfloor/(s+1)\geq \lfloor \epsilon_L m/(s+1) \rfloor=\lfloor \hat{\epsilon}_L m\rfloor$ of the translates are non-overlapping. (To see this, consider the translate of $P_L$, denoted $\pi_1$, that contains a point with the largest $x$ value and note that at most $s$ other translates of $P_L$ can intersect  $\pi_1$, since the span of $\pi_1$ is $s$. Now ignore $\pi_1$ and any translates of $P_L$ that intersect  $\pi_1$. Of the remaining translates of $P_L$, let $\pi_2$ be the translate that contains a point with the largest $x$ value. Then $\pi_2$ is disjoint from $\pi_1$ and intersects at most $s$ of the remaining translates. We proceed inductively to produce at least $\lfloor \epsilon_Lm\rfloor/(s+1)$  non-overlapping translates of $P_L$.)
 
Thus for sufficiently large $m > m_L$, all but exponentially few  size $m$  2SAPs have link-type of the form $L' = J_1\underline{*} L \underline{*} J_2 \underline{*} L \underline{*} J_3 \underline{*}...\underline{*}J_{\lfloor \hat{\epsilon}_L m\rfloor}\underline{*}L\underline{*}J_{\lfloor \hat{\epsilon}_L m\rfloor +1}$. 
 
 Since $F$ is a good measure, condition (ii) of Definition  \ref{def:goodmeasure} gives that     $F(L') \geq (\lfloor m\hat{\epsilon}_L \rfloor/2) F(L) > F(L)((2m_L)^{-1}m -1)$. 
      Further, taking $m>  2m_L +1$ and $0 <\beta\leq \frac{1}{2m_L} - \frac{1}{2m_L +1}$
      gives $F(L')\geq \beta m F(L)$, so that all but exponentially few sufficiently large size $m$ 2SAPs have $F$-complexity that grows at least linearly in $m$, as $m\to\infty$. 
\end{proof}

Next we establish appropriate choices of $P_L$ and tube dimensions for several classes of good measures of 2-component spanning link complexity. 

\begin{prop}\label{prop:patterns}
There exists patterns $P_L$ and associated  tubes $\tube_{N,M}$ as required for Theorem \ref{thm:exprare} for $F$ equal to splitting number or for $F$  a good measure of 2-component spanning link complexity that satisfies either Theorem \ref{thm:LinkMeasureToGoodMeasure} or Theorem \ref{thm:moregoodmeasures}.  
\end{prop}
\begin{proof}
If $F$ is a good measure of 2-component spanning link complexity that satisfies either Theorem \ref{thm:LinkMeasureToGoodMeasure} or Theorem \ref{thm:moregoodmeasures}, then a local 2SAP knot pattern of a knot $K$ works as $P_L$ in the theorem.  If $F$ satisfies Theorem \ref{thm:LinkMeasureToGoodMeasure},  $K$ can  be any knot, otherwise taking $K$ to be a knot that satisfies condition (2) of Theorem \ref{thm:moregoodmeasures} for $F$ works.  
Given a knot $K$, a local 2SAP knot pattern of $K$ fits in at least $\tube_{N,M}$ where $(N+1)(M+1)> \text{trunk}(K)+4$ by Corollary \ref{cor:equalheight}.

Note further that if $F$ satisfies Theorem \ref{thm:LinkMeasureToGoodMeasure},  $K$ can  be any knot, and hence taking $K=3_1$ will work. Since $3_1$ is meridionally small, Corollary \ref{cor:equalheight} gives that a local knot pattern of $3_1$ exists in $\tube=\tube_{3,1}$ (or larger) and hence all but exponentially few size sufficiently large size $m$ 2SAPs in $\tube$ have $F$-complexity
which exceeds $\displaystyle{F(L(3_1))(\frac{m}{2m_L} -1)}$.

In the case of $F$ equal to splitting number, take $P_L$ to be a 2SAP pattern corresponding to the tangle in Figure \ref{fig:closedlink}.  The tangle in Figure \ref{fig:closedlink} fits in $\tube=\tube_{3,1}$ (or larger) by Proposition \ref{prop:3x1}.  In this case the theorem implies that and all but exponentially few size sufficiently large size $m$ 2SAPs in $\tube$ have splitting number
which exceeds $\displaystyle{(\frac{m}{2m_L} -1)}$.
\end{proof}

As noted previously, the number of prime factors  satisfies the conditions of Theorem \ref{thm:moregoodmeasures} for any non-trivial $K$. Hence given a tube size that admits a local 2SAP knot pattern for some non-trivial $K$, there exists a $\beta >0$ such that  all but exponentially few sufficiently large size $m$ 2SAPs have at least $\beta m$ prime factors.

\section{Characterizing 2SAP Entanglements}

\subsection{Equal Height Trunk}\label{Sec:EHTrunk}

Motivated by the situations where one studies a system of 2SAPs where each component has the same span, we consider an alternate version of the trunk of links. 

The standard height function $h:\mathbb{R}^3\rightarrow \mathbb{R}, h(x,y,z) = y$ gives a decomposition of the 3-space into parallel level planes $h^{-1}(t)$ for $t \in \mathbb{R}$. We will only consider embeddings where $h$ restricted to the image of any such embedding is a Morse function. The \textit{trunk} of the embedding is defined to be
\begin{align*}
    \text{trunk}(l) = \max_{t\in \mathbb{R}}|h^{-1}(t) \cap l|.
\end{align*}
The \textit{trunk} of a link type $L$ is the minimum trunk over all morse embeddings of $L$.

In this paper we will focus on morse embeddings of a $2$-component link where the highest maximum of each component lies at the same height $h_{max}$, the lowest minimum of each component lies at the same height $h_{min}$. Given such an embedding, the \textit{equal height trunk} of the embedding is defined to be
\begin{align*}
    \text{EH-trunk}(l) = \max_{t\in \mathbb{R}}|h^{-1}(t) \cap l|.
\end{align*}
The \textit{equal height trunk} of a link type $L$ is the minimum EH-trunk over all embeddings of $L$ with the restrictions stated above.

\begin{thm}[Theorem 1 of \cite{ishihara2017bounds}] \label{thm:IshiBound} A link type $\mathcal{L}$ can be can be confined in a $M\times N$ tube if and only if
trunk$(\mathcal{L}) < (M + 1)(N + 1)$.
\end{thm}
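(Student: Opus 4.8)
The plan is to prove the two implications separately, using as the central dictionary the fact that $\text{trunk}$ is a topological invariant computed as a minimum over \emph{all} Morse height functions: for a link already sitting in $\mathbb{T}_{N,M}$ I am therefore free to take the height to be the coordinate $x$ along the axis of the tube. Under this choice, a generic level plane $x=j+\tfrac12$ meets the link exactly in the set of axis-parallel edges $\{(j,y,z),(j+1,y,z)\}$ that it crosses, and each such edge is indexed by a point $(y,z)$ of the $(N+1)\times(M+1)$ cross-sectional grid. Thus the number of intersection points at any generic level is at most $(N+1)(M+1)$, and the whole theorem reduces to matching this cross-sectional count against the minimal maximal-level-set count.

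For the forward implication, suppose $L$ is confined in $\mathbb{T}_{N,M}$. After a small perturbation making the $x$-coordinate a Morse function on the piecewise-linear embedding, the observation above produces a Morse embedding isotopic to $L$ whose maximal level set has size at most $(N+1)(M+1)$, so that $\text{trunk}(L)\le (N+1)(M+1)$. To upgrade this to a strict inequality I would rule out a \emph{saturated} level, i.e.\ one at which all $(N+1)(M+1)$ grid positions carry a crossing edge. Here parity does part of the work: since each component meets a plane an even number of times, a saturated level is impossible whenever $(N+1)(M+1)$ is odd. When $(N+1)(M+1)$ is even I would instead track the second edge at each degree-two vertex of the two cross-sections bounding a saturated slab and show that the resulting configuration can be rounded off by a local isotopy that drops the peak count by one, so that a thin-position embedding never saturates. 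The sole genuine exception is the two-point cross-section ($\min\{N,M\}=0$, $\max\{N,M\}=1$), where the unknot realized as a unit rectangle attains $\text{trunk}=2=(N+1)(M+1)$; this degenerate tube must be excluded (or treated by hand), which I would flag explicitly.

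For the reverse implication, suppose $\text{trunk}(L)<(N+1)(M+1)$ and fix a thin-position Morse embedding realizing $\tau:=\text{trunk}(L)$. I would discretize it into $\mathbb{T}_{N,M}$ with the tube axis $x$ again playing the role of height: partition the height range at the critical values; assign to each of the $\tau$ strands of a regular level a distinct column of the $(N+1)\times(M+1)$ cross-sectional grid; route monotone strands by straight $x$-edges; realize each maximum and minimum by a lattice cap joining two columns; and realize the permutation of strands occurring between consecutive critical levels by a sequence of lattice transpositions of adjacent strands. The essential point is that, because $\tau<(N+1)(M+1)$, there is always at least one free grid position, which supplies exactly the room needed to reroute one strand past another and to open and close caps without violating self-avoidance; performing these elementary moves one at a time and spreading them out along the $x$-axis converts the abstract thin-position data into an honest self-avoiding polygonal link confined to the tube.

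The step I expect to be the main obstacle is this constructive reverse direction, and specifically the realization of an arbitrary strand permutation between two critical levels as lattice moves inside the confined $(N+1)\times(M+1)$ cross-section using only the single guaranteed free column: one must verify that each adjacent transposition (and each cap insertion) can be executed with a bounded amount of extra span while keeping all strands pairwise disjoint, and that these local routines compose into a single globally embedded polygon rather than merely a disjoint collection of arcs. A secondary delicate point is the strict inequality in the forward direction, where the saturation analysis must be made uniform across the even case and where the degenerate thin tube above requires separate bookkeeping.
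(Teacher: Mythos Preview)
This theorem is not proved in the paper: it is quoted verbatim as Theorem~1 of Ishihara et al.\ \cite{ishihara2017bounds} and used as a black box. The paper does, however, prove the equal-height analogue (Theorem~\ref{thm:equalheight}), and that proof explicitly adapts the argument of \cite{ishihara2017bounds}, so one can read off the shape of the original from it. Against that template, your plan is essentially on target. The reverse direction you sketch---discretize a trunk-realizing Morse embedding by assigning strands to grid columns, use the one guaranteed free column to realize caps and adjacent transpositions, and spread these along the $x$-axis---is exactly what the cited proof does; the paper even notes that the construction preserves the relative heights of all maxima and minima, which is precisely the output of your routine.

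For the forward direction your treatment of the strict inequality diverges from the cited argument. The parity observation (each closed component meets a generic level evenly, so a saturated level is impossible when $(N+1)(M+1)$ is odd) is correct and pleasant, but it leaves the even case only loosely handled. The argument actually used (visible in the proof of Theorem~\ref{thm:equalheight}) is uniform and more concrete: if some slab is saturated, locate the nearest hinge $H_a$ below and $H_b$ above that contain a cross-sectional edge, then translate a single edge of $H_b$ in the negative $x$-direction into $H_a$; this is an isotopy that strictly drops the maximal level count on that slab, and one iterates. No parity split is needed. Your ``track the second edge at each degree-two vertex and round off'' is gesturing at the same phenomenon but is not yet an argument; replacing it with the hinge-translation move would close the gap cleanly. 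Your observation about the degenerate $0\times 1$ tube and the unknot is a genuine boundary case of the statement as literally written; the paper and \cite{ishihara2017bounds} implicitly work with nontrivial links and tube sizes large enough that this does not arise, so flagging it is appropriate but it is not a defect in your strategy.
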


The following theorem is a straightforward adaptation of Theorem \ref{thm:IshiBound} and its proof. Given an embedding of a 1-manifold $\pi$ in a tube and an integer $k$, a \textit{hinge} $H_k$ is defined to be the set of vertices and edges lying in the intersection of $\pi$ and the $yz$-plane defined by $\{(x,y,z) \;\ | \;\ x=k\}.$

\equalheight*

\begin{proof}
Suppose there is an embedding $\gamma$ of $L$ confined in a $M\times N$ tube in such a way that each component of $\gamma$ has the same span. Then \text{EH-trunk}$(L) \leq$ \text{trunk}$(\gamma) \leq (M+1)(N+1)$. Suppose \text{trunk}$(\gamma) = (M+1)(N+1)$. Then there exists $t\in \mathbb{R}\setminus \mathbb{Z}$ such that $|h^{-1}(t)\cap \gamma|=(M+1)(N+1)$. Let $H_{a}$ be the first hinge below $t$ that contains an edge and let $H_{b}$ be the first hinge above $t$ that contains an edge. If $a=h_{min}$ and $b=h_{max}$, then $L$ is a trivial link, a contradiction. Without loss of generality, suppose $h_{min}<a$. The embedding $\gamma$ can be modified by translating an edge in $H_{b}$ in the negative $x$ direction until it lies in $H_{a}$ (see how Figure \ref{fig:fullsection} changes to Figure \ref{fig:fullsection2}). Note that the translated copy of the edge in $H_{b}$ cannot correspond to an edge in $H_{a}$ as this would imply that the unknotted component of $\gamma$ containing these edges had a different span than the other component of $\gamma$. Since the translated copy of the edge in $H_{b}$ does not correspond to an edge in $H_{a}$, then this translation induces an isotopy taking $\gamma$ to a different embedding $\gamma'$ of $L$ confined in a $M\times N$ tube such that for all $t\in \mathbb{R}\setminus \mathbb{Z}$, $|h^{-1}(t)\cap \gamma'|<(M+1)(N+1)$ whenever $a-1<t<b+1$. We can repeat this process to eliminate all $t$ such that $|h^{-1}(t)\cap \gamma|=(M+1)(N+1)$ and produce an embedding $\gamma'$ of $L$ in a $M\times N$ tube in such a way that each component of $\gamma$ has the same span and $\text{trunk}(\gamma') < (M+1)(N+1)$. Hence, \text{EH-trunk}$(L) < (M+1)(N+1)$.

Suppose that EH-trunk$(L)<(M+1)(N+1)$. In the proof of Theorem \ref{thm:IshiBound}, the authors show that given any morse embedding $\gamma$ of the link $L$ such that trunk$(\gamma)<(M+1)(N+1)$, then $\gamma$ can be embedded in the $M\times N$ tube in such a way that the relative heights of the minima and maxima of $\gamma$ with respect to $h$ are preserved. Suppose $\gamma'$ is an embedding of $L$ such that trunk$(\gamma')=$ EH-trunk$(\gamma')=$ EH-trunk$(L)<(M+1)(N+1)$. Then $\gamma'$ can be embedded in the $M\times N$ tube in such a way that the relative heights of the minima and maxima of $\gamma'$ with respect to $h$ are preserved. In particular, each component of the resulting embedding has the same span.

\end{proof}

\begin{figure}[h]
\labellist
\small\hair 2pt
\pinlabel $H_a$  at 100 -10
\pinlabel $H_b$  at 165 -10
\endlabellist
\centering
\includegraphics[width=5cm]{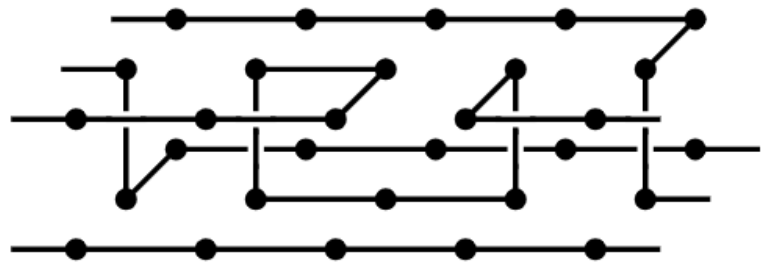}
\caption{A 2SAP with an $(M+1)(N+1)$ section.}
\label{fig:fullsection}
\end{figure}

\begin{figure}[h]
\labellist
\small\hair 2pt
\pinlabel $H_a$  at 100 -10
\pinlabel $H_b$  at 165 -10
\endlabellist
\centering
\includegraphics[width=5cm]{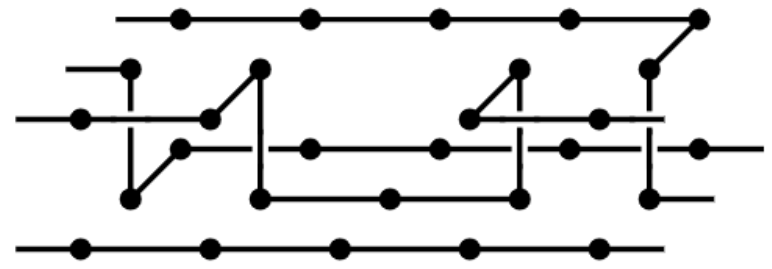}
\caption{After reductions, a nontrivial 2SAP does not contain an $(M+1)(N+1)$ section.}
\label{fig:fullsection2}
\end{figure}

As an application, we can show that the tangle in Figure \ref{fig:Incomp4} can be embedded in an $N\times M$ tube, where $M+N\geq 4.$

\begin{prop}
    The tangle in Figure \ref{fig:Incomp4} can be embedded in a $(3\times 1)$-tube.\label{prop:3x1}
\end{prop}

\begin{proof}
   If the link $L$ in Figure \ref{fig:closedlink} can be embedded in a $(3\times 1)$-tube, we can remove some edges at the leftmost and the rightmost hinges and the tangle in Figure \ref{fig:Incomp4} can also be embedded in a $(3\times 1)$-tube. Figure \ref{fig:closedlink} depicts a level sphere that realizes the equal height trunk of 6. By Theorem \ref{thm:equalheight}, the link $L$ can be embedded in a $(3\times 1)$-tube since $6< (3+1)(1+1).$
\end{proof}

For knots, the concepts of local (1-filament) and non-local (2-filament) knot patterns were defined in \cite[Section 3]{beaton2018characterising}. In \cite[Result $4^*$ of the Supplemental Information]{beaton2018characterising}, some results about the tube sizes that admit non-local and local knot patterns are presented. Here we improve those results under an additional assumption.

\begin{prop}
Suppose that $K\in {\cal L}^1$ embeds in an $(N \times M)$-tube. Suppose also that a trunk minimizing diagram of $K$ contains an arc of the diagram that connects the highest local maximum to the lowest local minimum isolated as shown in Figure \ref{fig:localpattern}, then a local  knot pattern of $K$ also embeds in an $(N \times M)$-tube.\label{prop:fittinglocal}
\end{prop}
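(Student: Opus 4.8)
The plan is to start with a trunk-minimizing Morse embedding of $K$ that fits in the $(N\times M)$-tube (which exists by Theorem~\ref{thm:IshiBound}, since $\mathrm{trunk}(K)<(N+1)(M+1)$) and that, by hypothesis, contains an isolated arc $\alpha$ running monotonically from the global maximum of $K$ down to the global minimum of $K$ as in Figure~\ref{fig:localpattern}. The first step is to reinterpret this embedding in the language of 2SAPs: I would add a second, ``trivial'' component $K_0$ that is a single unknotted strand running parallel to the monotone arc $\alpha$, spanning the same height interval $[h_{min},h_{max}]$ with exactly one local maximum and one local minimum. Because $\alpha$ is isolated (it is separated from the rest of the diagram of $K$ by a vertical strip containing no other part of the link), this parallel strand $K_0$ can be inserted disjointly from $K$, so that the result is an embedding of the split link $0_1\cup K$. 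By construction each component reaches its highest maximum at $h_{max}$ and its lowest minimum at $h_{min}$, so this is an admissible embedding for computing EH-trunk, and removing the top and bottom edges of each component exhibits it as a local 2SAP knot pattern of $K$ in the sense defined in the introduction (it is isotopic as a $4$-string tangle to Figure~\ref{fig:tight} with the trefoil replaced by $K$).

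The second step is to control the EH-trunk, equivalently the width of the widest hinge, and then invoke Theorem~\ref{thm:equalheight} (or rather its proof, which shows a link with $\mathrm{EH\text{-}trunk}<(N+1)(M+1)$ embeds in the $(N\times M)$-tube with each component having the same span). At any height $t$, the intersection $h^{-1}(t)\cap(K_0\cup K)$ consists of the points of $h^{-1}(t)\cap K$ together with at most one extra point on the parallel strand $K_0$. Near the global max/min of $K$ the arc $\alpha$ contributes only one point of $h^{-1}(t)\cap K$, and $K_0$ runs right beside it, so in the height range where $K_0$ lives one can route $K_0$ so that $|h^{-1}(t)\cap(K_0\cup K)|$ exceeds $|h^{-1}(t)\cap K|$ by at most... well, potentially one, giving $\mathrm{EH\text{-}trunk}(0_1\cup K)\le \mathrm{trunk}(K)+1$ at most hinges but possibly $+2$ where $\alpha$ turns around. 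Since $\mathrm{trunk}(K)\le (N+1)(M+1)-1$, this only gives $(N+1)(M+1)+1$, which is not quite good enough — so the real point of the \emph{isolation} hypothesis is that along $\alpha$ the hinge width of $K$ alone is small (just $1$, or at worst a constant), leaving room to run $K_0$ beside it without ever creating a hinge of width $\ge (N+1)(M+1)$. Concretely: the widest hinges of the trunk-minimizing diagram occur away from the isolated arc $\alpha$, and there $K_0$ contributes nothing new because $K_0$ is confined to the vertical strip containing $\alpha$; within that strip, $K$ itself is thin. Hence $\mathrm{EH\text{-}trunk}(0_1\cup K)<(N+1)(M+1)$, and by Theorem~\ref{thm:equalheight} this equal-height embedding fits in the $(N\times M)$-tube with both components of equal span, giving the desired local knot pattern.

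The third step is bookkeeping: translate the resulting lattice embedding of $0_1\cup K$ with equal spans into an honest local 2SAP knot pattern, i.e.\ delete one edge from each component in the $x=s-1/2$ plane and one from each in the $x=1/2$ plane (or the analogous lattice-level operation), and check that what remains is a proper 2SAP pattern whose $4$-string tangle type is $L(K)$. This is routine given the setup, since the monotone parallel strands $\alpha$ and $K_0$ furnish exactly the two ``rail'' strands that the local pattern of Figure~\ref{fig:tight} requires.

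\textbf{Main obstacle.} The crux is the hinge-width estimate in the second step: one must verify that inserting the parallel strand $K_0$ alongside the isolated monotone arc $\alpha$ never pushes any cross-sectional count up to $(N+1)(M+1)$. This is precisely where the isolation hypothesis of Figure~\ref{fig:localpattern} is essential — without it, the extra strand could land in an already-maximal hinge and force a larger tube (which is exactly the phenomenon the introduction warns about for higher bridge number, and which Corollary~\ref{cor:equalheight} only circumvents for $2$-bridge knots). I expect the careful part to be formalizing ``isolated'' so that it gives: there is a vertical strip $R$ of the diagram meeting $K$ in just the monotone arc $\alpha$, every hinge of $K$ of width $\ge (N+1)(M+1)-1$ lies outside $R$, and $K_0$ can be embedded entirely inside $R$; with that, the width bound is immediate and the rest follows from Theorem~\ref{thm:equalheight}.
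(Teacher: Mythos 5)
There is a genuine gap, and it begins with a misreading of the statement. The proposition concerns a \emph{local knot pattern of the knot $K$} in the sense of \cite{beaton2018characterising}: a 2-string tangle (a proper pattern for single SAPs) in which the knotting of $K$ is tied locally in one of the two strands. It does not concern the two-component link $0_1\cup K$ or a local \emph{2SAP} knot pattern, so no second component $K_0$ needs to be introduced and no EH-trunk estimate is required. The intended argument is the one-line observation the paper gives: take an embedding of $K$ in the $(N\times M)$-tube realizing a trunk-minimizing diagram containing the isolated monotone arc $\alpha$ of Figure~\ref{fig:localpattern}, and cut $\alpha$ at its two endpoints. What remains is a 2-string tangle whose strands are $\alpha$ itself (an unknotted rail) and the rest of $K$ (which carries all the knotting); this is precisely a local (1-filament) knot pattern of $K$, and it embeds in the same tube because it is obtained from an embedding that already fits by deleting two points. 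The statement you actually set out to prove --- that a local \emph{2SAP} pattern of $K$ fits in the same tube --- is the final claim of Corollary~\ref{cor:equalheight}, and in the paper it is \emph{deduced from} Proposition~\ref{prop:fittinglocal} (together with the fact that 2-bridge knots admit trunk-minimizing diagrams in bridge position), not the other way around.

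Even taken on its own terms, your second step does not close. A closed second component spanning the full height interval $[h_{min},h_{max}]$ contributes at least \emph{two} points to every intermediate level set, not one, so the correct count is $\text{EH-trunk}(0_1\cup K)=\text{trunk}(K)+2$ (this is exactly the first assertion of Corollary~\ref{cor:equalheight}); with only $\text{trunk}(K)\le (N+1)(M+1)-1$ this yields $(N+1)(M+1)+1$, which does not fit. Your proposed rescue --- that the widest hinges occur away from the isolated arc, so $K_0$ never lands in a maximal level --- cannot work: $\alpha$ joins the highest maximum to the lowest minimum, so $\alpha$ (and hence the parallel component $K_0$) passes through \emph{every} level of the embedding, including the one realizing the trunk. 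The isolation hypothesis separates $\alpha$ from the rest of the diagram transversally; it says nothing about which levels realize the trunk. This is precisely why Corollary~\ref{cor:equalheight} in general only guarantees an $((N+1)\times M)$-tube for the local 2SAP pattern, and why the same-tube conclusion there needs the extra bridge-position input rather than a width count.
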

\begin{proof}
   Cutting the special arc stated in the statement of the proposition at its ends yields a 2-string tangle that is a local knot pattern. Since the knot itself already fits, the resulting 2-string tangle gives a knot pattern that still embeds in the same tube size.
\end{proof}
\begin{figure}[ht!]
\centering
\includegraphics[width=3cm]{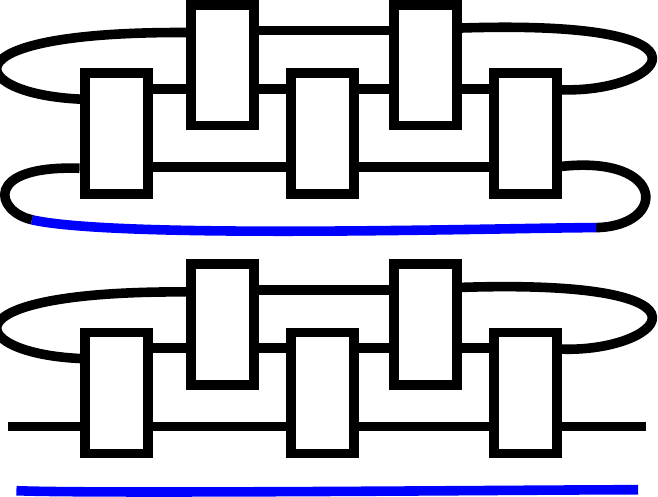}
\caption{Cutting the special arc colored in blue at its ends (in the top diagram) gives a local pattern (bottom diagram). The rectangles represent braid boxes.}
\label{fig:localpattern}
\end{figure}

\begin{figure}[ht!]
\centering
\includegraphics[width=3cm]{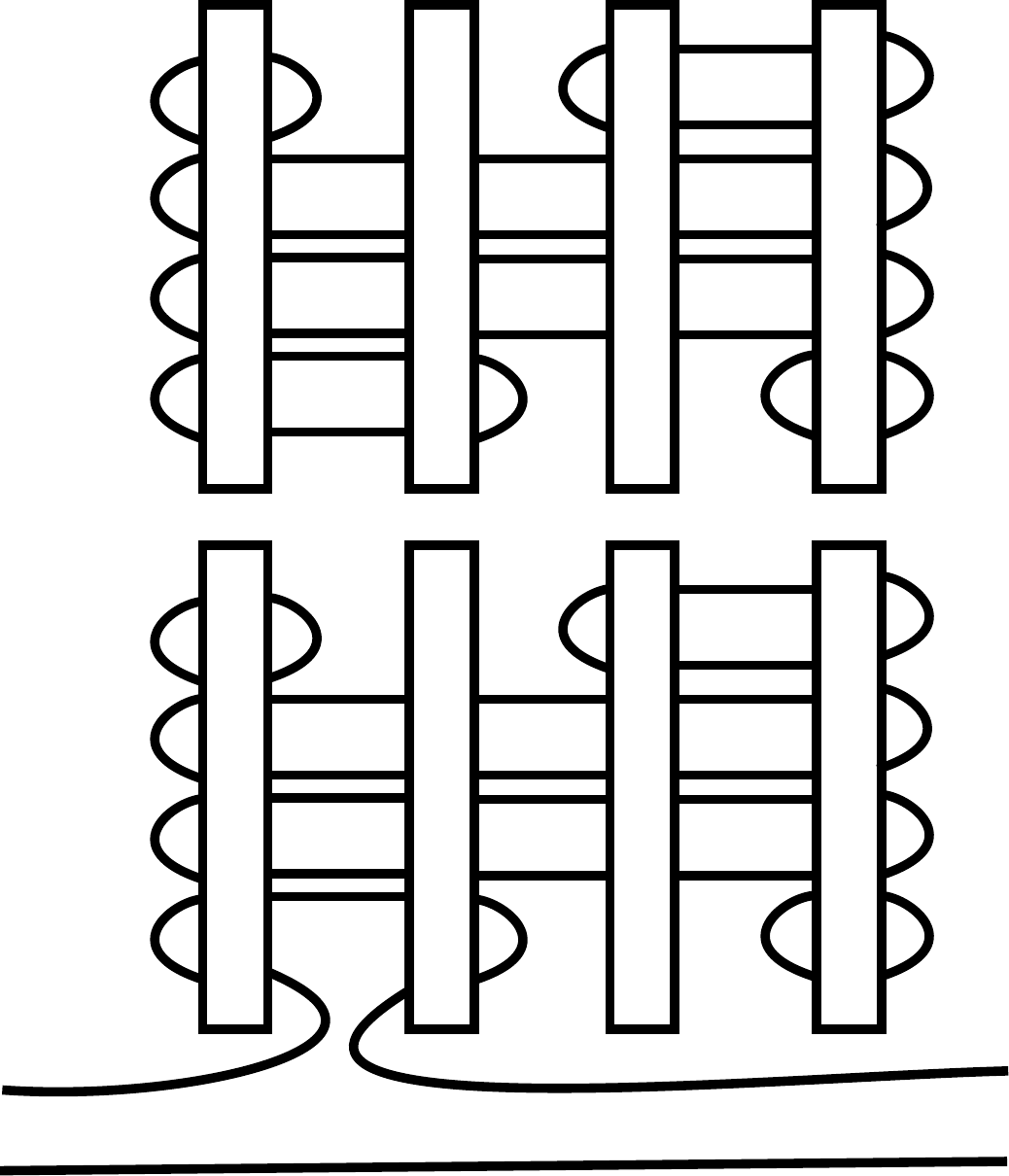}
\caption{A case where trying to create a local pattern (bottom diagram) from the top diagram may need a larger tube size. The rectangles represent braid boxes.}
\label{fig:mayincrease}
\end{figure}

A knot where a local knot pattern may not exist in the same tube size has a schematic picture in Figure \ref{fig:mayincrease}. In this case, removing an arc (from the top diagram) to make a local pattern may involve increasing the trunk of the associated tangle. Therefore, a larger tube size is needed. Examples of knots with trunk minimizing diagrams containing a special arc as in Figure \ref{fig:localpattern} include knots such that a trunk minimizing embedding is in bridge position. In \cite{ozawa2010waist}, the author shows that all meridionally small knots have the property that a trunk minimizing embedding is in bridge position. The class of meridionally small knots includes 2-bridge knots and torus knots. Note that a knot is meridionally small if there exists no essential surface properly embedded in the knot exterior with meridional boundary, for more details see \cite{ozawa2010waist}.

\equalheightplus*
\begin{proof}

    We work with the standard height function $h:\mathbb{R}^3\rightarrow \mathbb{R}, h(x,y,z) = y$. Let $k$ be an embedding for $K$ that realizes trunk$(K)$. Let $a$ denote the height of the highest local maximum of $K$ and $b$ denote the height of the lowest local minimum of $K$. We can form an embedding of $L$ by placing the unknot component in such a way that it has exactly one local maximum at height $a$ and one local minimum at height $b.$ This gives an embedding for $L$ with equal height trunk at most trunk$(K)$ + 2.

    Let $k'$ be an embedding of $L$ that realizes the EH-trunk$(L)$. Because the highest maximum of each component lies at the same height $h_{max}$, and the lowest minimum of each component lies at the same height $h_{min}$, any level plane $h^{-1}(t)$ between $h_{min}$ and $h_{max}$ intersects both components of $k'$. The unknot component contributes at least two intersection points to $|k'\cap h^{-1}(t)|$ for any $t\in (h_{min},h_{max}).$ Additionally, The component isotopic to $K$ contributes at least trunk$(K)$ intersection points to $|k'\cap h^{-1}(t_0)|$ for some regular value $t_0\in (h_{min},h_{max}).$ Therefore, EH-trunk$(L)\geq |k'\cap h^{-1}(t_0)|\geq$ trunk$(K)+2$. Combining this with the bound from the previous paragraph, we have that EH-trunk$(L) =$ trunk$(K)+2$.

    The remaining claims can be proved essentially using the arguments in \cite[Result $4^*$]{beaton2018characterising}. If $L$ embeds as a 2SAP in a $(N\times M)$ tube, then pulling the string as shown in Figure \ref{fig:mayincrease} gives a local 2SAP knot pattern of $K$ in a $((N+1)\times M)$ tube because we have found an embedding with EH-trunk at most $(N+1)(M+1)+2$. 
    
    Suppose that $K$ is a meridionally small knot and that $L=0_1\cup K$ embeds as a 2SAP in a $(N\times M)$ tube. By Proposition \ref{prop:fittinglocal} and the discussion that follows, there is a trunk-minimizing diagram of $K$ that is in bridge position and a corresponding local 2SAP knot pattern of $K$ that embeds in a $(N\times M)$ tube.
\end{proof}
In this paper, we are interested in the equal height trunk such that the trunk of one particular component is minimized. Observe that it is in general not possible to find a position that minimizes that trunk of both components simultaneously. For instance, consider 2-component links, where each component is the unknot. If such a position exists, then the trunk of each component is 2. In particular, the link is a 2-bridge link. However, there exist links where two components are both unknots, but the bridge number of the entire link is arbitrarily large.

\subsection{Proof of Theorem \ref{thm1}}

Before proving Theorem \ref{thm1}, further definitions and results are needed. 
\subsubsection{Satellite Links}\label{Sec:SatLinks}
Roughly speaking, we form a two component satellite link by knotting up two solid tori, each of which contains a knot, into the shape of a different two component link (see Figure \ref{fig:sat}). More precisely, let $L^0=L^0_1\cup L^0_2$ be a 2-component link in the $3$-sphere. Let $\widehat{K_i}$ be a knot that is not contained in a 3-ball in an unknotted solid torus $\widehat{V_i}$. We also require that $\widehat{K_i}$ is not isotopic to the core of $\widehat{V_i}$. Let $\Psi_i$ be a homeomorphism from $\widehat{V_i}$ to a small regular neighborhood of $L^0_i$. Denote the image of $\widehat{V_i}$ by $V_i$, the image of $\widehat{K_i}$ be $K_i$, and the boundary $\partial(V_i)$ of $V_i$ as $T_i$. In this construction, we call $L=K_1 \cup K_2$ a \textit{satellite link} with patterns $(\widehat{V_1},\widehat{K_1}),(\widehat{V_2},\widehat{K_2})$ and \textit{companions} $L^0_1, L^0_2$.

Recall that a simple closed curve $c$ in a surface $\Sigma$ is \textit{separating} if $S\backslash c$ has two connected components. We say $c\subset \Sigma$ is \textit{inessential} if it is separating and a component of $S\backslash c$ is a disk or annulus. The curve $c$ is \textit{essential} if it is not inessential.

In a solid torus, there is exactly one properly embedded disk whose boundary is an essential curve on the torus, up to proper isotopy. We call such a disk a \textit{meridian} disk. We define the \textit{index} of a pattern $(\widehat{V_i},\widehat{K_i})$ to be the minimal geometric intersection number between $\widehat{K_i}$ and a meridian disk of $\widehat{V_i}.$ If we have a satellite link $L=K_1\cup K_2$ so that one of the companions $L^0_1$ is unknotted. Then, the closure of the exterior of a small regular neighborhood of $L_1^0$ is another solid torus containing the knot $L^0_2$. Thus, $(\overline{S^3\backslash \widehat{V_1}},L_2^0)$ is a pattern, and we call the index of this pattern the \textit{dual index} of $L_1^0.$

\begin{figure}[ht!]
\labellist
\small\hair 2pt

\pinlabel {$\Psi_1$}  at 112 422
\pinlabel {$\Psi_2$}  at 812 422
\pinlabel $\widehat{V_1}$  at 203 70
\pinlabel $\widehat{V_2}$  at 740 70
\pinlabel $V_1$  at 303 460
\pinlabel $V_2$  at 710 610
\pinlabel \tiny $\widehat{K_1}$  at 103 189
\pinlabel \tiny $\widehat{K_2}$  at 668 189

\endlabellist
\centering
\includegraphics[width=8cm]{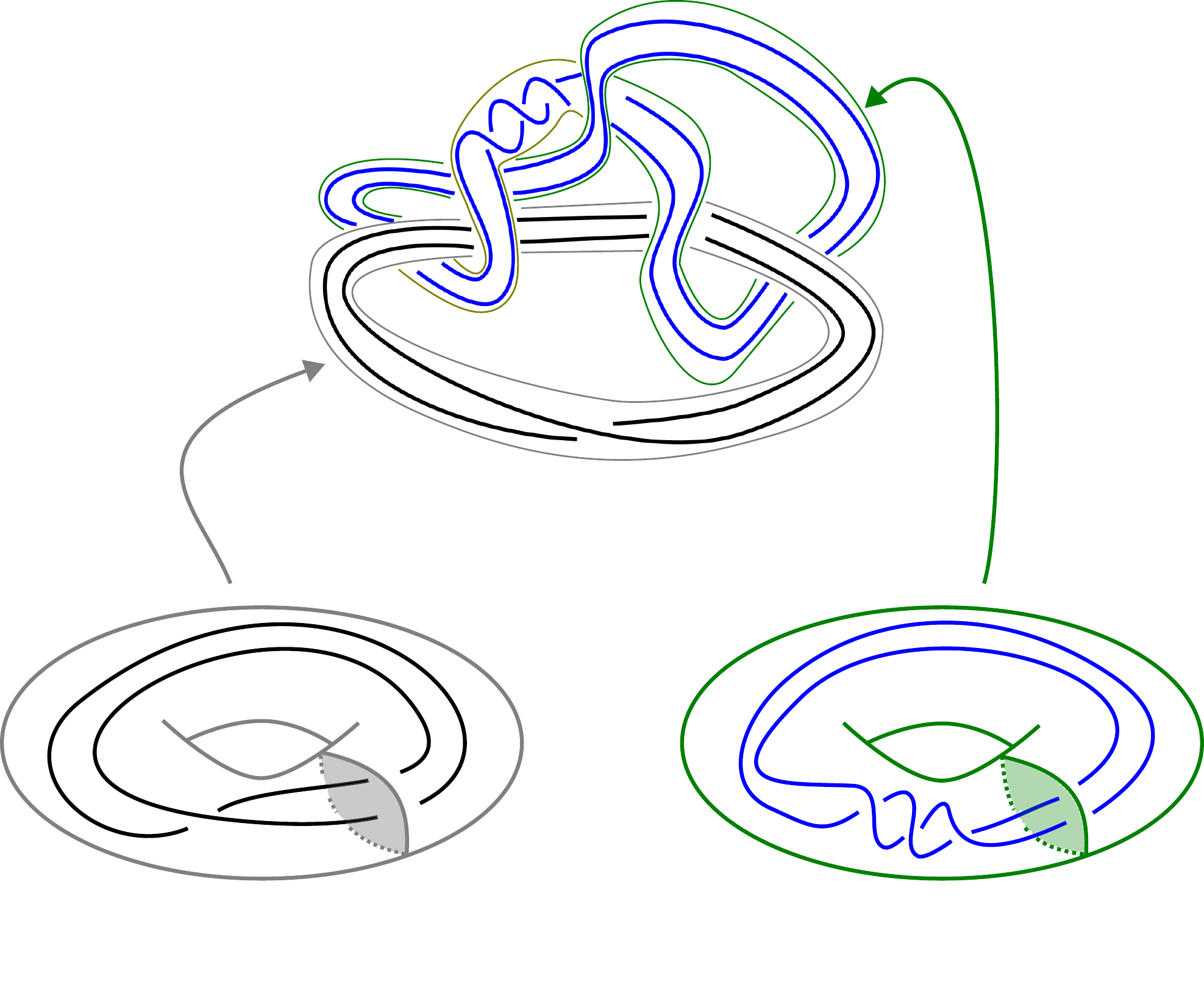}
\caption{A satellite 2-component link is formed by taking two solid tori each containing a knot, and then knotting up those solid tori.}
\label{fig:sat}
\end{figure}

\subsubsection{Morse Positions and Essential Saddles}\label{Sec:Morse}

A surface properly embedded in the exterior of a link is \textit{essential} if it is incompressible, boundary- incompressible, and not boundary parallel. Roughly, an essential surface is special since its topology is faithfully represented in the topology of the ambient 3-manifold. For example, the gray and green tori in Figure \ref{fig:sat} are both essential in the exterior of the link, since neither can be deformed to be parallel to the black or blue knot, respectively. Furthermore, if a curve on the green or gray torus bounds a disk in the link exterior, then such a disk intersects the link. This is in contrast to any torus surface embedded in $S^3$, which always has a meridian disk. Such a meridian disk implies that the torus is compressible and, thus, inessential.

One assumption that makes many topological arguments work is that we can position our links and surfaces in a nice way. For instance, we can define a height function $h:S^3\rightarrow \mathbb{R}\cup \{+\infty, -\infty\}$ that can be restricted to an embedded surface. As a consequence of Morse theory, we can always perturb an embedded surface $\Sigma$ slightly so that $h|_{\Sigma}$ has critical points corresponding to local maxima, local minima and saddles. Moreover, we can assume that all critical points are at different heights. If $h|_{\Sigma}$ has these properties we say $h|_{\Sigma}$ is a \textit{Morse function} and $\Sigma$ is a \textit{Morse embedding}. For additional details, see \cite{milnor1963morse}.

Given a surface $\Sigma$ and Morse function $h|_{\Sigma}$ the set $h^{-1}(r)\cap \Sigma$ is a level set consisting of embedded circles when $r$ is a regular value. Additionally, $h^{-1}(c)\cap \Sigma$ is a collection of circles and a point when $c$ corresponds to a maximum or minimum, and $h^{-1}(c)\cap \Sigma$ is a collection of circles and a $S^1\vee S^1$ when $c$ corresponds to a saddle. Thus, the level sets of $h|_{\Sigma}$ give rise to a foliation $\mathcal{F}_\Sigma$ of $\Sigma$ where the leaves of the foliation consist of circles, points and copies of $S^1\vee S^1$.

Suppose that $T$ is an embedded torus bounding a solid torus $V$ in $S^3$. When $c$ is a critical value of $h|_{T}$ corresponding to a saddle of $T$ we can classify the saddle $S=s_1\vee s_2\subset h^{-1}(c)\cap T$ based on the topology of the loops $s_1$ and $s_2$ in $\Sigma$. If either $s_1$ or $s_2$ is inessential in $T$, then $S$ is called an \textit{inessential saddle}. A saddle is \textit{essential} if it is not inessential.  We can find circles $c_1,c_2$, which are parallel in $T$ to $s_1,s_2$, respectively in a level sphere $F$ slightly lower or higher than $h^{-1}(c)$. Then, $c_1 \cup c_2$ bounds an annulus on $F$. If a collar neighborhood of $c_1\cup c_2$ in this annulus is contained in $V$, then we call $S$ a \textit{nested saddle}. See Figure \ref{fig:nested} for an example of a nested saddle.

\begin{figure}[h]
\centering
\includegraphics[width=3cm]{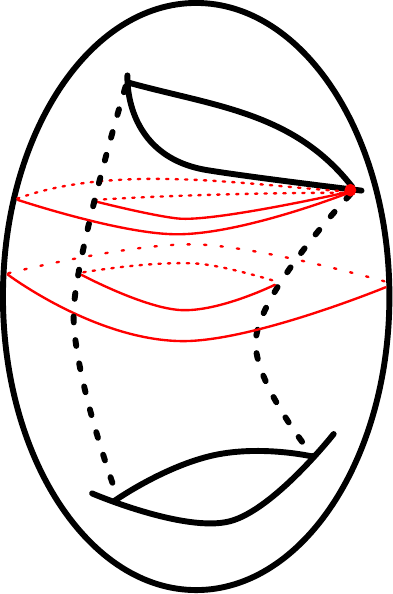}
\caption{A nested saddle.}
\label{fig:nested}
\end{figure}
\subsubsection{Proof of Theorem \ref{thm1}}
In this section we let $L=K_1 \cup K_2$ be a 2-component satellite link with patterns $(\widehat{V_1},\widehat{K_1}),(\widehat{V_2},\widehat{K_2})$ and companions $L^0_1, L^0_2$. Recall from Section \ref{Sec:SatLinks} that $V_i$ is the image of $\widehat{V_1}$ in $S^3$. We additionally assume that $K_1$ and $L^0_1$ are unknots. In particular, the dual index $\omega_1$ of $L^0_1$ exists. 

Given a space $A$, we will let $|A|$ denote the number of connected components of $A$.

We begin with a sequence of lemmas that culminates in the proof of Theorem \ref{thm1}. The authors suspect that the proof of the following lemma already exists in the literature, but we provide the proof here also for completeness.

\begin{lemma}\label{lem:meridianOutside}
    If $D$ is a meridian disk for $\partial V_1$ in $\overline{S^3\backslash V_1},$ then $|D\cap K_2|\geq \omega_1\sigma_2.$\label{lem:claim1}
\end{lemma}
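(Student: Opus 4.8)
The plan is to put $D$ into a normal form with respect to $T_2=\partial V_2$ in which $D\cap V_2$ becomes a disjoint union of meridian disks of $V_2$, and then count. Write $W=\overline{S^3\setminus V_1}$; since $L^0_1$ is unknotted, $W$ is an unknotted solid torus, $L^0_2$ is isotopic to the core of $V_2$, and by definition $\omega_1$ is the minimal number of points in which a meridian disk of $W$ meets $L^0_2$. Because the inequality to be proved is a lower bound on $|D\cap K_2|$ and any two meridian disks of $W$ are ambient isotopic in $W$, it suffices to prove it for a meridian disk $D$ that first minimizes $|D\cap K_2|$ and, subject to that, minimizes the number of circles of $D\cap T_2$, with $D$ transverse to $T_2$, to $K_2$, and to $L^0_2$.

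First I would normalize $D\cap T_2$, a finite family of disjoint circles in the interior of the disk $D$. Since $W$ is irreducible, a circle of $D\cap T_2$ that is inessential and innermost on $T_2$ can be removed by pushing a subdisk of $D$ across a ball of $W$ bounded by that subdisk and a disk in $T_2$; this lowers $|D\cap T_2|$ and cannot raise $|D\cap K_2|$ (the disk in $T_2$ misses $K_2$), contradicting minimality. Hence all circles of $D\cap T_2$ are essential on $T_2$ and so mutually parallel. A circle innermost on $D$ bounds a disk $d\subseteq D$ whose interior misses $T_2$, so $d\subseteq V_2$ or $d\subseteq\overline{W\setminus V_2}$. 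If $d\subseteq\overline{W\setminus V_2}$ then an essential curve of $T_2$ bounds a disk outside $V_2$; attaching a $2$-handle to $V_2$ along that curve produces a $3$-ball in $W$ containing $L^0_2$ (the attaching curve must be a longitude of $V_2$, using irreducibility of $W$ together with a $\pi_1$-count), so $\omega_1=0$ and the lemma holds trivially. Thus $d\subseteq V_2$, so $\partial d$, being essential on $T_2$ and bounding a disk inside the solid torus $V_2$, is the meridian of $V_2$, and therefore every circle of $D\cap T_2$ is a meridian of $V_2$. (Here I use that a meridian of $V_2$ cannot bound a disk in $\overline{W\setminus V_2}$: capping such a disk with a meridian disk of $V_2$ gives an embedded $2$-sphere in $W$ meeting the closed curve $L^0_2$ exactly once, which is impossible since $W$ is irreducible and a separating $2$-sphere meets $L^0_2$ an even number of times.)

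Second I would show that each component of $D\cap V_2$ is a meridian disk of $V_2$. A component $P_0$ of $D\cap V_2$ is a planar surface properly embedded in $V_2$ with every boundary circle a meridian of $V_2$. If $P_0$ is not a disk it is not essential (the only essential surface in a solid torus with meridional boundary is a meridian disk), so $P_0$ is compressible or boundary-compressible; the associated disk, together with the subdisk of $D$ it co-bounds (which contains a circle of $D\cap T_2$), yields an isotopy of $D$ through meridian disks of $W$ that strictly lowers $|D\cap T_2|$ without raising $|D\cap K_2|$, again contradicting minimality. (A boundary-parallel annulus component is removed the same way.) Thus $D\cap V_2=E_1\sqcup\cdots\sqcup E_r$ with each $E_i$ a meridian disk of $V_2$.

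Finally I would count. Each $E_i$ meets the core $L^0_2$ in exactly one point, so $r=|D\cap L^0_2|\ge\omega_1$ by definition of the dual index. Each $E_i$, being a meridian disk of $V_2$, meets $K_2$ in at least $\sigma_2$ points by definition of the index of $(\widehat V_2,\widehat K_2)$. Hence $|D\cap K_2|=\sum_{i=1}^{r}|E_i\cap K_2|\ge r\sigma_2\ge\omega_1\sigma_2$. The step I expect to be the main obstacle is the second one: eliminating non-disk components of $D\cap V_2$ requires a compression (or boundary-compression) of $D$, and one must check that it can be performed compatibly with having already minimized $|D\cap K_2|$ — for instance by arranging the compressing disk to miss $K_2$ (which should follow from $P_0$ being compressible already in the complement of $K_2$ in $V_2$), or by reorganizing the order of the two minimizations together with a standard normal-form argument for the pair $(D,T_2)$.
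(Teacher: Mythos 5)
Your overall architecture coincides with the paper's: minimize the pair $(|D\cap K_2|,\,|D\cap\partial V_2|)$ lexicographically, remove inessential circles of $D\cap\partial V_2$ by innermost-disk surgery (the surgery disks lie on $\partial V_2$, hence miss $K_2$, so neither entry of the complexity goes up), conclude that $D\cap V_2$ is a disjoint union of $r$ meridian disks of $V_2$, and then count: $r=|D\cap L^0_2|\ge\omega_1$ by definition of the dual index, and each meridian disk meets $K_2$ at least $\sigma_2$ times. Your first and third steps are sound, and you are in fact more careful than the paper in dispatching the degenerate case where an essential curve of $\partial V_2$ bounds a disk in $\overline{W\setminus V_2}$ (which forces $\omega_1=0$ and makes the lemma vacuous).

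The gap is exactly where you suspect it: eliminating a non-disk component $P_0$ of $D\cap V_2$. Your move is to compress (or $\partial$-compress) $P_0$ inside $V_2$ and swap the subdisk $D'\subseteq D$ bounded by $\partial\delta$ for the compressing disk $\delta$. But $\delta$ is only guaranteed to exist in $V_2$, not in $V_2\setminus K_2$: nothing prevents every compressing disk for $P_0$ in $V_2$ from meeting $K_2$, in which case the swap can increase $|D\cap K_2|$ and the minimality you are leaning on evaporates ($\delta$ may also meet other components of $D\cap V_2$, so embeddedness of the new disk needs care as well). Your parenthetical claim that $P_0$ ``should'' already be compressible in the complement of $K_2$ is not justified and is not true in general. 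The paper sidesteps this entirely: it never compresses inside $V_2$, but only swaps a sub-annulus of $D$ for one of the annuli $A_i$ into which the circles $D\cap\partial V_2$ cut $\partial V_2$ (a ``boundary-nested'' annulus, one whose two boundary circles cobound an annulus in $D$); since $A_i\subseteq\partial V_2$ is automatically disjoint from $K_2\subset\mathrm{int}(V_2)$, this move can only decrease $|D\cap K_2|$ while strictly decreasing $|D\cap\partial V_2|$. An $I$/$O$ labelling of the circles of $D\cap\partial V_2$ (according to which side of $\partial V_2$ the adjacent collar in $D$ lies on) then shows that any surviving nesting of these circles in $D$ would force a boundary-nested $A_i$ to exist; hence after the swaps there is no nesting, every component of $D\cap V_2$ is innermost, and is therefore a meridian disk. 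Replacing your compression step with this annulus-swap-and-labelling argument closes the gap; the rest of your proof then goes through as written.
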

\begin{proof}
    Choose $D$ a meridian disk for $\partial V_1$ in $\overline{S^3\backslash V_1}$ such that $D$ minimizes the lexicographical complexity $(|D\cap K_2|,|D \cap \partial V_2|)$ over all such meridian disks. By transversality, $D\cap \partial V_2$ is a collection of loops. Suppose there is a loop in $D\cap \partial V_2$ that is inessential in $\partial V_2$. Consider such a loop $\gamma\subset D\cap \partial V_2$ that is innermost in $\partial V_2.$ This means that $\gamma$ bounds a disk $E$ in $\partial V_2$ with $int(E)\cap D=\emptyset.$ We surger $D$ along $E$ to obtain meridian disk $D^*$ such that $|D^*\cap \partial V_2| < |D\cap \partial V_2|$ and $|D^*\cap K_2| \leq |D\cap  K_2|$. This violates the minimality of $D$ with respect to the lexicographical complexity. Hence, all loops in $D\cap \partial V_2$ are essential in $\partial V_2$. Consequently, all loops of $D\cap \partial V_2$ are meridians of $\partial V_2$, and $D\cap \partial V_2$ cuts $\partial V_2$ into $j$ annuli $A_1, A_2, \dots , A_j$ where $j=|D\cap \partial V_2|$.

   We say one of the annuli $A_i$ is \emph{boundary-nested} if the two loops of $\partial A_i$ cobound an annulus embedded in $D$. Suppose there exists a boundary-nested $A_i$. Let $\partial A_i=\gamma_1 \cup \gamma_2\subset D$ such that $\gamma_1$ is outermost and bounds a disk $E_1$ in $int(D)$ and $\gamma_2$ is innermost and bounds a disk $E_2\subset E_1\subset int(D)$. The disk $D^*=(D\setminus E_2)\cup A_i \cup E_1$ is a meridian disk for $\partial V_2$ such that, after a small isotopy that pushes $D^*$ off of $A_i$, $|D^*\cap \partial V_2| < |D\cap \partial V_2|$ and $|D^*\cap K_2| \leq |D\cap  K_2|$, violating the minimality of $D$. Hence, none of the annuli $A_1, A_2, \dots , A_j$ are boundary-nested. 

   Every loop  $\gamma_i \subset D\cap \partial V_2$ bounds a unique disk $E_i\subset D$. If a collar of $\gamma_i$ in $E_i$ is contained in $V_2$ we label $\gamma_i$ with ``$I$''. If a collar of $\gamma_i$ in $E_i$ is contained in $S^3\setminus V_2$ we label $\gamma_i$ with ``$O$''. Note that if $A_i$ is not boundary-nested, then the labels on the boundary components of $A_i$ must match. Furthermore, if there is nesting among the loops of $D\cap \partial V_2$ in $D$, then some loops of $D\cap \partial V_2$ are labeled $I$ and some loops are labeled $O$. Hence, there must exist an annulus $A_j$ with one boundary component labeled $I$ and the other labeled $O$. However, $A_j$ must be boundary-nested, a contradiction. Thus, there is no nesting among the the loops of $D\cap \partial V_2$ in $D$. Consequently, $V_2$ intersects $D$ in a collection of $j$ meridian disks for $V_2$. Then $|D\cap K_2|\geq j\sigma_2\geq \omega_1\sigma_2.$

\end{proof}

Recall the definitions of $\mathcal{F}_{V_1}$ and essential saddle given in Section \ref{Sec:Morse}.

\begin{lemma}\label{lem:esssaddleexist}
    $\mathcal{F}_{V_1}$ has an essential saddle. Furthermore, if $S$ is the height of a highest essential saddle, then $h^{-1}(p)\cap \partial V_1$ is a collection of inessential curves in $\partial V_1$ for all regular values $p>S.$
\end{lemma}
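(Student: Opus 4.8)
The plan is to establish both assertions by tracking how the level sets of $h|_{T_1}$ evolve, where $T_1=\partial V_1$. First I would record that $T_1$ is an essential torus in the link exterior $\overline{S^3\setminus N(L)}$: it is not boundary-parallel because $\widehat{K_1}$ is not isotopic to the core of $\widehat{V_1}$, no compressing disk lies on the $V_1$ side because $\widehat{K_1}$ is not contained in a ball in $\widehat{V_1}$, and none lies on the $\overline{S^3\setminus V_1}$ side because, by Lemma~\ref{lem:meridianOutside}, every meridian disk of that solid torus meets $K_2$. Since $h|_{T_1}$ is Morse it has only finitely many critical points, so once an essential saddle is known to exist the highest one, at some height $S$, is well defined.

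The technical core is the following sub-claim: if $p_0$ is a height above which every saddle of $h|_{T_1}$ is inessential, then for every regular value $p>p_0$ each component of $h^{-1}(p)\cap T_1$ is inessential in $T_1$. I would prove this by downward induction from the highest critical point of $h|_{T_1}$, which is necessarily a local maximum, so just below it the level set is a single small circle bounding a disk in $T_1$. Crossing a maximum or a minimum merely destroys or creates a small trivial circle. Crossing a saddle is a band move on the current level curves, under which the total class in $H_1(T_1)$ is additive: if $s_1\vee s_2$ is inessential then one of $s_1,s_2$ is null-homologous, so if every level curve just above the saddle is null-homologous then so is every level curve just below it, hence inessential (on a torus a null-homologous simple closed curve bounds a disk). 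An essential level curve can therefore be created only at a saddle with both $s_1$ and $s_2$ essential. Applying the sub-claim with $p_0=S$ is exactly the ``furthermore'' statement.

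For the existence of an essential saddle I would argue by contradiction. If $h|_{T_1}$ has no essential saddle, then the sub-claim, applied from the top all the way down, shows that every level curve of $h|_{T_1}$ is inessential in $T_1$. I then run the innermost-disk simplification used in the proof of Theorem~\ref{thm:IshiBound} and in \cite{ozawa2010waist}: choose a component $\gamma$ of $h^{-1}(t)\cap T_1$ bounding an innermost disk $E$ in the plane $h^{-1}(t)$; since $\gamma$ is inessential in $T_1$ it also bounds a disk $E'\subset T_1$, so $E\cup E'$ is a $2$-sphere meeting $L$ at most in $\operatorname{int}(E)$, and whenever this sphere bounds a ball across which $T_1$ may be isotoped, that isotopy strictly reduces the number of critical points of $h|_{T_1}$ (or the count $|h^{-1}(t)\cap T_1|$). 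Iterating, $T_1$ is eventually compressed in $\overline{S^3\setminus N(L)}$ or isotoped into a ball disjoint from $L$, contradicting that $T_1$ is essential.

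The step I expect to be the main obstacle is this last cut-and-paste: one must organize the surgeries so that the process terminates and, above all, rule out the configuration in which a canceling sphere bounds a ball meeting $L$ on each side. This is precisely where I would use the incompressibility of $T_1$ together with the fact that both $V_1$ and $\overline{S^3\setminus V_1}$ meet $L$ essentially (for the latter, once more Lemma~\ref{lem:meridianOutside}), in the same spirit as \cite{ishihara2017bounds}. By comparison, the homological bookkeeping in the sub-claim is routine, but it should be written out with care, since it is exactly the point at which the notion of an essential saddle is forced.
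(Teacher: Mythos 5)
Your ``furthermore'' argument is correct and is a legitimate variant of the paper's: you induct downward over critical points and track homology classes under the band move at each saddle, using that an inessential saddle has a null-homologous petal to prevent a splitting from producing essential curves; the paper instead defines $S$ as the supremum of heights of regular levels containing an essential curve, observes that $S$ must be a saddle, and then analyzes the three boundary curves $c_1,c_2,c_3$ of a neighborhood of the figure-eight to show that saddle is essential (of type II). The two arguments are the two directions of the same equivalence, and your homological bookkeeping, including the reduction ``null-homologous simple closed curve on a torus bounds a disk,'' is sound.

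The existence half is where you have a genuine gap, and it is a self-inflicted one. You correctly reduce, via your sub-claim, to the situation where every regular level curve of $h|_{\partial V_1}$ is inessential in $\partial V_1$, but you then try to derive a contradiction by compressing $\partial V_1$ inside the link exterior with innermost-disk surgeries. As you yourself flag, that route forces you to control spheres $E\cup E'$ whose interiors meet $L$, to choose which complementary ball to isotope across, and to prove termination --- none of which you resolve, and all of which is beside the point. The contradiction is purely a statement about Morse functions on a closed torus and needs neither $L$ nor the incompressibility of $T_1$: if every regular level curve bounds a disk in $\partial V_1$, then every level curve is separating, so the Reeb graph of $h|_{\partial V_1}$ is a tree; counting, with $E=V-1$ and $2E=(\#\text{max}+\#\text{min})+3(\#\text{saddles})$, this forces $\chi(\partial V_1)=\#\text{max}+\#\text{min}-\#\text{saddles}=2$, i.e.\ $\partial V_1\cong S^2$, which is absurd. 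This is exactly the one-line observation the paper uses (``otherwise $\partial V_1$ is homeomorphic to a sphere''). With that substitution your proof closes; without it, the existence claim is not established. Your preliminary paragraph on the essentiality of $T_1$ in the link exterior is correct but is not needed for this lemma.
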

\begin{proof}
    Assume that $h|_{\partial V_1}$ is Morse and all critical points are at distinct levels. Let $M=max(h(\partial V_1))$. There exists $\varepsilon > 0$ such that for all regular values $c > M-\varepsilon,$ $h^{-1}(c)\cap \partial V_1$ is a (possibly empty) collection of loops, all of which are inessential in $\partial V_1.$ Note that we get a contradiction if $h^{-1}(c)\cap \partial V_1$ is a collection of inessential loops in $\partial V_1$ for all regular values $c$ because then $\partial V_1$ is homeomorphic to a sphere.

    Define $S$ to be the supremum of all real values $c$ such that $h^{-1}(c)\cap \partial V_1$ contains an essential loop of intersection. Consequently, $S$ is a critical value of $h|_{\partial V_1}.$ Since minima or maxima of $h|_{\partial V_1}$ cannot create or destroy essential loops of intersection, $S$ corresponds to a saddle. Let $\sigma$ be the saddle at height $S$. The boundary of a closed regular neighborhood of $\sigma$ in $\partial V_1$ is a collection of three loops $c_1$, $c_2$ and $c_3$, as depicted in Figure \ref{fig:sadd}. 
    
    If both $c_1$ and $c_2$ are inessential loops in $\partial V_1$, then  $c_3$ is forced to be inessential in $\partial V_1$. By definition of $S$, all loops of $h^{-1}(S+\epsilon)\cap \partial V_1$ are inessential in $\partial V_1$ for small $\epsilon$. Moreover, all loops of $h^{-1}(S+\epsilon)\cap \partial V_1$ are isotopic in $\partial V_1$ to loops in $h^{-1}(S-\epsilon)\cap \partial V_1$ with the possible exceptions of $c_1$, $c_2$ and $c_3$. Since each of $c_1$, $c_2$ and $c_3$ are inessential then all loops in $h^{-1}(S-\epsilon)\cap \partial V_1$ are inessential in $\partial V_1$, which contradicts the definition of $S$. Thus, at least one of $c_1$ or $c_2$ must be essential in $\partial V_1$.

    Without loss of generality, suppose $c_2$ is essential in $\partial V_1$. Then $\sigma$ cannot be a type I saddle as in Figure \ref{fig:sadd}, since the definition of $S$ would imply $c_2$ is inessential. If $\sigma$ is a type II saddle, then, by the definition of $S$, $c_3$ is inessential in $\partial V_1$. If $c_1$ is inessential, then $c_2$ would be forced to be inessential, See Figure \ref{fig:iness}, which is impossible. Hence, both $c_1$ and $c_2$ are essential in $\partial V_1$ and $\sigma$ is an essential type II saddle.
\end{proof}
\begin{figure}[ht!]
\labellist
\small\hair 2pt

\pinlabel {$c_3$}  at 12 22
\pinlabel $c_2$  at 203 93
\pinlabel $c_1$  at -3 93
\pinlabel $c_3$  at 463 103
\pinlabel {$c_1$}  at 271 20
\pinlabel {$c_2$}  at 481 20
\endlabellist
\centering
\includegraphics[scale=0.65]{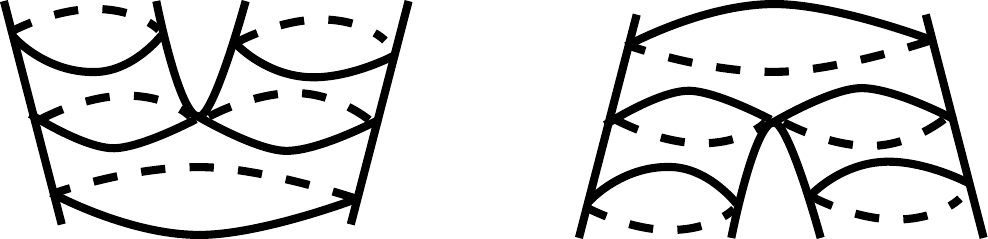}
\caption{(Left) A Type I saddle. (Right) A Type II saddle.}
\label{fig:sadd}
\end{figure}

\begin{figure}[ht!]
\labellist
\small\hair 2pt
\pinlabel $c_2$  at 209 43
\endlabellist
\centering
\includegraphics[scale=0.65]{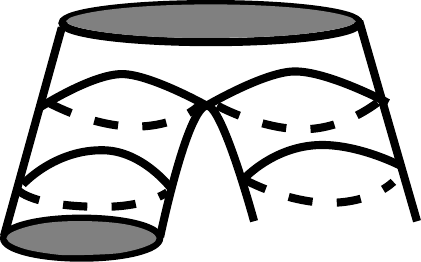}
\caption{If $c_1$ and $c_3$ each bounds a disk in $\partial V_1,$ then $c_2$ does also.}
\label{fig:iness}
\end{figure}

\begin{lemma}\label{lem:goesupmustcomedown}
   Fix $j \geq \beta(K_1)$, where $\beta(K_1)$ is the bridge number of $K_1$ in $S^3.$ Suppose that the index $\sigma_1$ of $(\widehat{V_1},\widehat{K_1})$ is strictly greater than $j$. Additionally, suppose that $\ell'$ is an EH-trunk minimizing embedding of $L$ subject to the constraint that the $K_1$ component of $\ell'$ has $j$ maxima. Let $\sigma$ be the highest essential saddle. Then, $\sigma$ is nested with respect to $V_1$.
\end{lemma}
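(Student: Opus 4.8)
\textbf{Proof proposal for Lemma~\ref{lem:goesupmustcomedown}.}
The plan is to argue by contradiction: assume the highest essential saddle $\sigma$ of $\mathcal{F}_{V_1}$ is \emph{not} nested with respect to $V_1$, and derive that the $K_1$ component of $\ell'$ could be re-embedded with fewer than $j$ maxima while preserving the link type and the equal-height condition --- contradicting that $\ell'$ is EH-trunk minimizing subject to the $K_1$ component having exactly $j$ maxima (and, more importantly, contradicting that this constrained minimum requires $j\geq\beta(K_1)$ maxima when $\sigma_1>j$). First I would invoke Lemma~\ref{lem:esssaddleexist} to get that $\sigma$ exists and that for every regular value $p$ above the height $S$ of $\sigma$, the level set $h^{-1}(p)\cap\partial V_1$ consists only of inessential curves in $\partial V_1$. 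By Lemma~\ref{lem:esssaddleexist}, $\sigma$ is an essential type~II saddle, so the two essential loops $c_1,c_2$ appearing just below $S$ become, just above $S$, a single curve $c_3$ that must be inessential; hence the solid-torus structure forces $c_1\cup c_2$ to cobound an annulus $A$ in a nearby level sphere, and the ``not nested'' hypothesis means the collar of $c_1\cup c_2$ in $A$ lies in $\overline{S^3\setminus V_1}$ rather than in $V_1$.

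Next I would analyze what $V_1$ looks like above height $S$. Since all level curves of $\partial V_1$ above $S$ are inessential, the portion of $V_1$ lying above height $S$ is a collection of $3$-balls capped off by the annulus $A$ together with disk pieces of $\partial V_1$; equivalently, the part of $K_1$ lying above $S$ is contained in a ball $B^+$ whose boundary meets $\partial V_1$ in inessential curves. The key point is that when the highest essential saddle is non-nested, this top ball $B^+$ can be isotoped (rel the rest of $V_1$ and rel $L$ outside a neighborhood of $B^+$) down past the saddle height, because a non-nested type~II saddle is precisely the configuration where the ``band'' realizing the saddle can be slid to merge the two tubes from outside $V_1$ --- this removes a pair of maxima of the core/the pattern $\widehat{K_1}$ relative to the meridian structure of $V_1$ without changing $K_1\cup K_2$ as a link. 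To make this rigorous I would use the standard thin-position / innermost-disk surgery machinery already in play in Lemma~\ref{lem:meridianOutside}: take the level sphere $F$ just above $S$, the curves $c_1,c_2$ bound disks $E_1,E_2$ on the $V_1$-side (since $c_3$ is inessential and we are above the last essential level), and the non-nested hypothesis lets us do a disk-swap that replaces the saddle by two fewer critical points.

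The bookkeeping step is to track the effect on the number of maxima of $K_1$. Since $j\geq\beta(K_1)$ and the index $\sigma_1$ of $(\widehat V_1,\widehat K_1)$ is strictly greater than $j$, the pattern $\widehat K_1$ genuinely threads $V_1$ more times than $K_1$ has bridges in $S^3$; this discrepancy is what guarantees that an essential saddle of $\partial V_1$ must actually \emph{exist} at a height interleaved with maxima of $K_1$ (otherwise $K_1$ would be forced into a ``thin'' position incompatible with $\sigma_1>j$). Removing a non-nested highest essential saddle as above would produce an embedding of $L$ whose $K_1$ component has at most $j-1$ maxima, still with both components satisfying the equal-height condition (the maxima and minima heights of the two components can be re-synchronized by a vertical isotopy as in the proof of Theorem~\ref{thm:equalheight}), contradicting the minimality and bridge-number constraints defining $\ell'$. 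Therefore $\sigma$ must be nested with respect to $V_1$.

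\textbf{Main obstacle.} The delicate part will be step two: showing precisely that a \emph{non-nested} highest essential saddle can be eliminated by an ambient isotopy that (a) does not increase the number of maxima of $K_1$, (b) does not disturb $K_2$, and (c) preserves the equal-height constraint. The subtlety is that sliding the top ball $B^+$ down past height $S$ could a priori force $B^+$ to pass through $K_2$ or through lower portions of $V_1$; ruling this out requires using that $B^+\cap\partial V_1$ consists of inessential curves (so $B^+$ is unknotted in the complement of everything below it) together with an innermost-disk argument on how $\partial V_1$ meets the intervening level spheres, analogous to --- but more involved than --- the surgery argument in Lemma~\ref{lem:meridianOutside}. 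I expect the cleanest route is to phrase it as: a non-nested type~II saddle contradicts $\sigma$ being the \emph{highest} essential saddle, because the non-nested band can be isotoped upward to cancel against the inessential structure above it, forcing the genuine topology of $V_1$ to be carried by a \emph{lower} saddle --- and iterating, no essential saddle exists, contradicting Lemma~\ref{lem:esssaddleexist}.
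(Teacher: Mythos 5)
There is a genuine gap: your proposed contradiction runs in the wrong direction, and the isotopy you need is both unjustified and unnecessary. The paper's argument is a counting argument, not a cancellation argument. If $\sigma=s_1\vee s_2$ is not nested, then a collar of $\partial D_i$ in each level disk $D_i\subset h^{-1}(S)$ lies \emph{inside} $V_1$; after surgering away the (necessarily inessential, by Lemma~\ref{lem:esssaddleexist}) curves of $int(D_i)\cap\partial V_1$, each $D_i$ yields a meridian disk \emph{for $V_1$ itself}, which by definition of the index meets $K_1$ at least $\sigma_1$ times. Because $\sigma$ is the highest saddle, the piece of $V_1$ above $h^{-1}(S)$ is a ``knee'': every strand of $K_1$ entering through $D_1\cup D_2$ must exit through it again, so $|K_1\cap h^{-1}(S)|\geq 2\sigma_1$ and $K_1$ has at least $\sigma_1>j$ local maxima --- contradicting the standing constraint that it has $j$. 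Your hypothesis $\sigma_1>j$ is used exactly here, to count punctures of $K_1$ on a meridian disk of $V_1$, not to ``guarantee an essential saddle interleaved with maxima.''

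By contrast, showing that one \emph{could} re-embed $K_1$ with fewer than $j$ maxima contradicts nothing: $\ell'$ minimizes EH-trunk subject to $K_1$ having $j$ maxima, and $j$ is not assumed to be the minimum achievable number of maxima. Moreover, the cancellation you propose cannot work as stated: $\sigma$ is an \emph{essential} saddle (both $s_1$ and $s_2$ essential in $\partial V_1$), so it carries genuine topology of the embedding of $V_1$ and cannot be removed by an isotopy that merely rearranges the inessential structure above it; your fallback claim that a non-nested saddle ``contradicts $\sigma$ being the highest essential saddle'' is false in general --- tori routinely have non-nested highest essential saddles, and it is only the quantitative hypothesis $\sigma_1>j$ that excludes this configuration here. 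The correct dichotomy is: nested means the $D_i$ are meridian disks of $\overline{S^3\setminus V_1}$ (used later, in Lemma~\ref{lem:OmegaSigmaBound}, to count $K_2$), while non-nested means they are meridian disks of $V_1$ (and the puncture count on $K_1$ then rules this case out).
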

\begin{proof}
    Let $S$ be the height of $\sigma=s_1\vee s_2.$ Let $D_i$ ($i=1,2)$ be the level disks in $h^{-1}(S)$ bounded by $s_i$ such that $int(D_1)\cap s_2=\emptyset$ and $int(D_2)\cap s_1=\emptyset$. Suppose that $\sigma$ is not nested with respect to $V_1$. It may be that $\partial V_1$ intersects $D_i$ in the interior many times. By transversality, we can assume that $int(D_i)\cap \partial V_1$ is a collection of simple closed curves. 
    
    We first consider $int(D_1)\cap \partial V_1$. Among all these curves, suppose there exists one loop $\gamma$ that is essential in $\partial V_1$. It follows that there exists $\varepsilon > 0$ such that $\partial V_1 \cap h^{-1}(S+\varepsilon)$ contains a curve that is essential in $\partial V_1$. This contradicts Lemma \ref{lem:esssaddleexist}. Therefore, $int(D_1)\cap \partial V_1$ is a collection of loops all of which are inessential in $\partial V_1$. Among them, consider an inessential loop $\gamma$ innermost in $\partial V_1.$  Then, $\gamma = \partial E,$ where $E$ is an embedded disk in $\partial V_1$ such that $int(E) \cap D_1 = \emptyset.$ We surger $D_1$ along $E$ to create a new embedded disk $D^*$ with $|D^*\cap \ell'| \leq |D_1\cap \ell'|$ and $|D^* \cap \partial V_1|<|D_1 \cap \partial V_1|.$ We repeat this process to produce an embedded disk $D^{**}$ such that $int(D^{**})\cap \partial V_1=\emptyset, \partial D^{**}=\partial D_1,$ and $|D^{**}\cap \ell'|\leq |D_1\cap \ell'|.$ Since $\sigma$ is not nested, we know a collar of $\partial D$ in $D$, and thus a collar of $\partial D^{**}$ in $D^{**}$, is contained in $V_1$. Since $int(D^{**})\cap \partial V_1=\emptyset$ then $D^{**}$ is a meridian for $V_1$.

    By the definition of the index, $|D^{**}\cap K_1|\geq \sigma_1$. We conclude that $|D_1\cap K_1|\geq \sigma_1$. The same argument applies to show that $|D_2\cap K_1|\geq \sigma_1$ as well. Since $\sigma$ is the highest saddle, we have a knee-shaped component of $V_1\setminus \sigma$ above $\sigma$, where at least $\sigma_1$ strands enter and must exit $D_1 \cup D_2$ (see Figure \ref{fig:knee}). This means that $|K_1\cap h^{-1}(S)|\geq 2\sigma_1$ and the $K_1$ component of $\ell'$ has at least $\sigma_1$ local maxima, contradicting our assumption that $k_1'$ has $j<\sigma_1$ local maxima.

\end{proof}

\begin{figure}[ht!]
\labellist
\small\hair 2pt
\pinlabel $s_1$  at 1 53
\pinlabel $s_2$  at 199 53
\endlabellist
\centering
\includegraphics[scale=0.65]{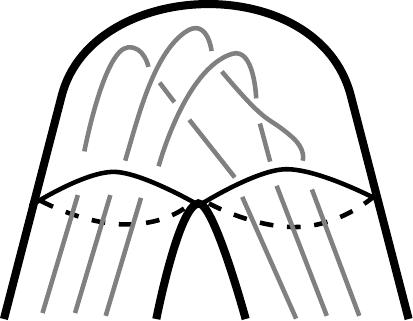}
\caption{A knee component.}
\label{fig:knee}
\end{figure}

\begin{lemma}\label{lem:OmegaSigmaBound}
    Fix $j \geq \beta(K_1)$, where $\beta(K_1)$ is the bridge number of $K_1$ in $S^3.$ Suppose that the index $\sigma_1$ of $(\widehat{V_1},\widehat{K_1})$ is strictly greater than $j$. Additionally, suppose that $\ell'$ is an EH-trunk minimizing embedding of $L$ subject to the constraint that the $K_1$ component of $\ell'$ has $j$ maxima. Let the highest essential saddle of $\mathcal{F}_{V_1}$ lie in the level sphere $h^{-1}(r).$ Then, $|L\cap h^{-1}(r)|\geq 2\omega_1\sigma_2+2$.
\end{lemma}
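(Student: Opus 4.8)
The plan is to analyze the level sphere $h^{-1}(r)$ containing the highest essential saddle $\sigma$ of $\mathcal{F}_{V_1}$, and to produce from it a meridian disk for $\partial V_1$ sitting in $\overline{S^3\setminus V_1}$, so that Lemma \ref{lem:meridianOutside} can be applied. First I would invoke Lemma \ref{lem:goesupmustcomedown} to conclude that $\sigma$ is nested with respect to $V_1$; that is, a collar of $s_1\cup s_2$ in the planar region of $h^{-1}(r)$ that they cobound lies in $\overline{S^3\setminus V_1}$ (note that ``nested with respect to $V_1$'' here means nested on the \emph{outside} of $V_1$, which is the configuration relevant for the dual index $\omega_1$). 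Combining nestedness with Lemma \ref{lem:esssaddleexist}, which guarantees that every regular level $p>S$ meets $\partial V_1$ only in inessential curves, I would argue exactly as in the proof of Lemma \ref{lem:goesupmustcomedown}: the two level disks $D_1,D_2\subset h^{-1}(r)$ bounded by $s_1$ and $s_2$ can be surgered along innermost inessential curves of $\partial V_1$ in their interiors, with each surgery weakly decreasing the intersection number with $L$, to produce disks $D_1^{**},D_2^{**}$ disjoint from $\partial V_1$ in their interiors with the same boundaries. Because $\sigma$ is nested on the outside, a collar of $\partial D_i^{**}$ lies in $\overline{S^3\setminus V_1}$, so each $D_i^{**}$ is a meridian disk for $\partial V_1$ in $\overline{S^3\setminus V_1}$.

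Next I would apply Lemma \ref{lem:meridianOutside} to each of $D_1^{**}$ and $D_2^{**}$ to get $|D_i^{**}\cap K_2|\geq \omega_1\sigma_2$, hence $|D_i\cap K_2|\geq \omega_1\sigma_2$ for $i=1,2$ (the surgeries only decreased intersections). Since $D_1$ and $D_2$ are disjoint level disks inside $h^{-1}(r)$, their contributions to $|K_2\cap h^{-1}(r)|$ are disjoint, giving at least $2\omega_1\sigma_2$ intersection points of $K_2$ with the level sphere. Finally, the level set $h^{-1}(r)\cap \partial V_1$ contains the saddle point of $\sigma$ and, more importantly, near the saddle the leaf is the wedge $s_1\vee s_2$, which means the solid torus $V_1$ has a ``knee''-type component just above $r$; in particular $K_1$, being geometrically essential (index $\sigma_1\geq 1$) inside $V_1$, must cross the level sphere $h^{-1}(r)$ at least twice (the component knot $K_1$ of the link has a maximum above height $r$ and a minimum below, so any level plane between meets it in at least two points — and $r$ is such a level since $\sigma$ lies strictly between $h_{min}$ and $h_{max}$). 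Adding these two unavoidable points of $K_1$ to the $2\omega_1\sigma_2$ points of $K_2$ yields $|L\cap h^{-1}(r)|\geq 2\omega_1\sigma_2+2$.

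The main obstacle I anticipate is making the surgery/innermost-disk argument fully rigorous in this ``outside'' setting: one must be careful that surgering $D_1$ and $D_2$ can be done \emph{simultaneously and compatibly} — the curves $int(D_1)\cap\partial V_1$ and $int(D_2)\cap\partial V_1$ may interact through the shared planar region of $h^{-1}(r)$, and one needs that the disks $D_1^{**},D_2^{**}$ remain embedded, disjoint, and still cobound (with an annulus in $\partial V_1$ or in $h^{-1}(r)$) the nested configuration witnessing that they are genuine meridians of $\partial V_1$ from the outside. It is also worth double-checking the edge case where one of $s_1,s_2$ could a priori be inessential in $\partial V_1$; since $\sigma$ is an \emph{essential} saddle (as produced by Lemma \ref{lem:esssaddleexist}, which in fact yields an essential type II saddle with both $c_1,c_2$ essential), both $s_1$ and $s_2$ are essential in $\partial V_1$, so both $D_1$ and $D_2$ really do become meridian disks after surgery, and the factor of $2$ is legitimate rather than an overcount. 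Once these points are handled the inequality $|L\cap h^{-1}(r)|\geq 2\omega_1\sigma_2 + 2$ follows, and this is precisely the bound needed to drive the final inequality $\text{EH-}trunk_{K_1=j}(L)\geq 2+2\omega_1\sigma_2$ in Theorem \ref{thm1}.
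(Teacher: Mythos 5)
Your proposal is correct and follows essentially the same route as the paper's proof: invoke Lemma \ref{lem:goesupmustcomedown} for nestedness of the highest essential saddle, run the innermost-curve surgery on each level disk $D_1,D_2$ separately to obtain meridian disks for $\partial V_1$ lying in $\overline{S^3\setminus V_1}$, apply Lemma \ref{lem:meridianOutside} to each to get $\omega_1\sigma_2$ intersections with $K_2$ apiece, and add the two points contributed by $K_1$. One small caution: your parenthetical gloss of ``nested'' has the sides reversed relative to the paper's definition (nested means the collar of $s_1\cup s_2$ in the annulus they cobound lies \emph{inside} $V_1$, which is precisely what forces the collar of $\partial D_i$ in $D_i$ to lie in $\overline{S^3\setminus V_1}$), but the conclusion you actually use --- that the surgered disks are meridian disks on the outside of $V_1$ --- is the correct one.
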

\begin{proof}
    The argument is similar to Lemma \ref{lem:goesupmustcomedown}, but now we get to use the information on $\omega_1.$ Let $S=s_1\vee s_2\subset h^{-1}(r)$ be the highest essential saddle in $\mathcal{F}_{V_1}$ Let $D_i$ ($i=1,2)$ be the disks bounded by $s_i$ in $h^{-1}(r)$ such that $int(D_1)\cap s_2=\emptyset$ and $int(D_2)\cap s_1=\emptyset$. By transversality, we can assume that $int(D_i)\cap \partial V_1$ is a collection of simple closed curves. Performing the same surgery argument as in the second paragraph of the proof of Lemma \ref{lem:goesupmustcomedown}, we can produce an embedded disk $D^{**}$ such that $int(D^{**})\cap \partial V_1=\emptyset, \partial D^{**}=\partial D_1,$ and $|D^{**}\cap L|\leq |D_1\cap L|.$ By Lemma \ref{lem:goesupmustcomedown}, $S$ is nested with respect to $V_1$. Hence, $D^{**}$ is a meridian disk for $\overline{S^3\setminus V_1}$. By Lemma \ref{lem:meridianOutside}, $|D^{**}\cap K_2|\geq \omega_1\sigma_2$. Consequently, $|D_1\cap K_2|\geq \omega_1\sigma_2$. By an equivalent argument applied to $D_2$, we also conclude that $|D_2\cap K_2|\geq \omega_1\sigma_2$. Additionally, since $\ell'$ is an EH-trunk minimizing embedding of $L$, $|h^{-1}(r)\cap K_1|\geq 2$. All together, we conclude that $|h^{-1}(r)\cap L|=|h^{-1}(r)\cap \ell'|\geq 2\omega_1\sigma_2+2$.

\end{proof}

\begin{remark}
    If EH-trunk is replaced by the usual version of trunk in the statement of the previous theorem, then the conclusion becomes $|L\cap h^{-1}(r)|\geq 2\omega_1\sigma_2$ (the +2 is missing).
\end{remark}

Putting the lemmas together, we are ready to prove our main theorem.
\restrictedtrunk*

\begin{proof}
 Let $\ell'=k_1'\cup k_2'$ be an embedding that minimizes the EH-trunk of $L$ subject to the constraint the trunk of $k_1'=j$.  Because $V_1$ is unknotted, it follows that $S^3\backslash V_1$ is another solid torus. We denote the dual index of $L_1^0$ by $\omega_1$ and the index of $(\widehat{V_2},\widehat{K_2})$ by $\sigma_2.$ Let $\mathcal{F}_{\partial V_1}$ be the foliation on $\partial V_1$ induced by the height function realizing the embedding $\ell'$. 

    By Lemma \ref{lem:meridianOutside}, any meridian disk $D$ for $\partial V_1$ in $\overline{S^3\backslash V_1}$ satisfies $|D\cap k_2| \geq \omega_1\sigma_2.$ By Lemma \ref{lem:esssaddleexist}, $\mathcal{F}_{V_1}$ has an essential saddle. Furthermore, if $r$ is the height of a highest essential saddle for $\mathcal{F}_{\partial V_1}$, then $h^{-1}(p)\cap \partial V_1$ is a collection of inessential curves in $\partial V_1$ for all regular values $p>r.$ By Lemma \ref{lem:goesupmustcomedown}, the saddle $r$ is nested in $V_1$. By Lemma \ref{lem:OmegaSigmaBound}, $|\ell'\cap h^{-1}(r)|\geq 2\omega_1\sigma_2+2$.
\end{proof}

\subsection{Examples and computations}

\begin{example}
    Consider the example in Figure \ref{fig:1maximum}. The example has the Whitehead link $5^2_1$ as the pattern. It has $\sigma_1=2$, $\sigma_2=2$, and $\omega_1=2$. This example also appeared in Kodani's work on bridge number of links subject to the constraint that one of the components has certain number of local maxima \cite{kodani2013new}. Note that the EH-trunk of this knot is at most 8.  By our theorem, the EH-trunk subject to the constraint that the unknotted component has one local maximum is at least $2+2(2\cdot 2)=10$.\label{exmpWhPat}
\end{example}

\begin{figure}[ht!]
\centering
\includegraphics[width=7cm]{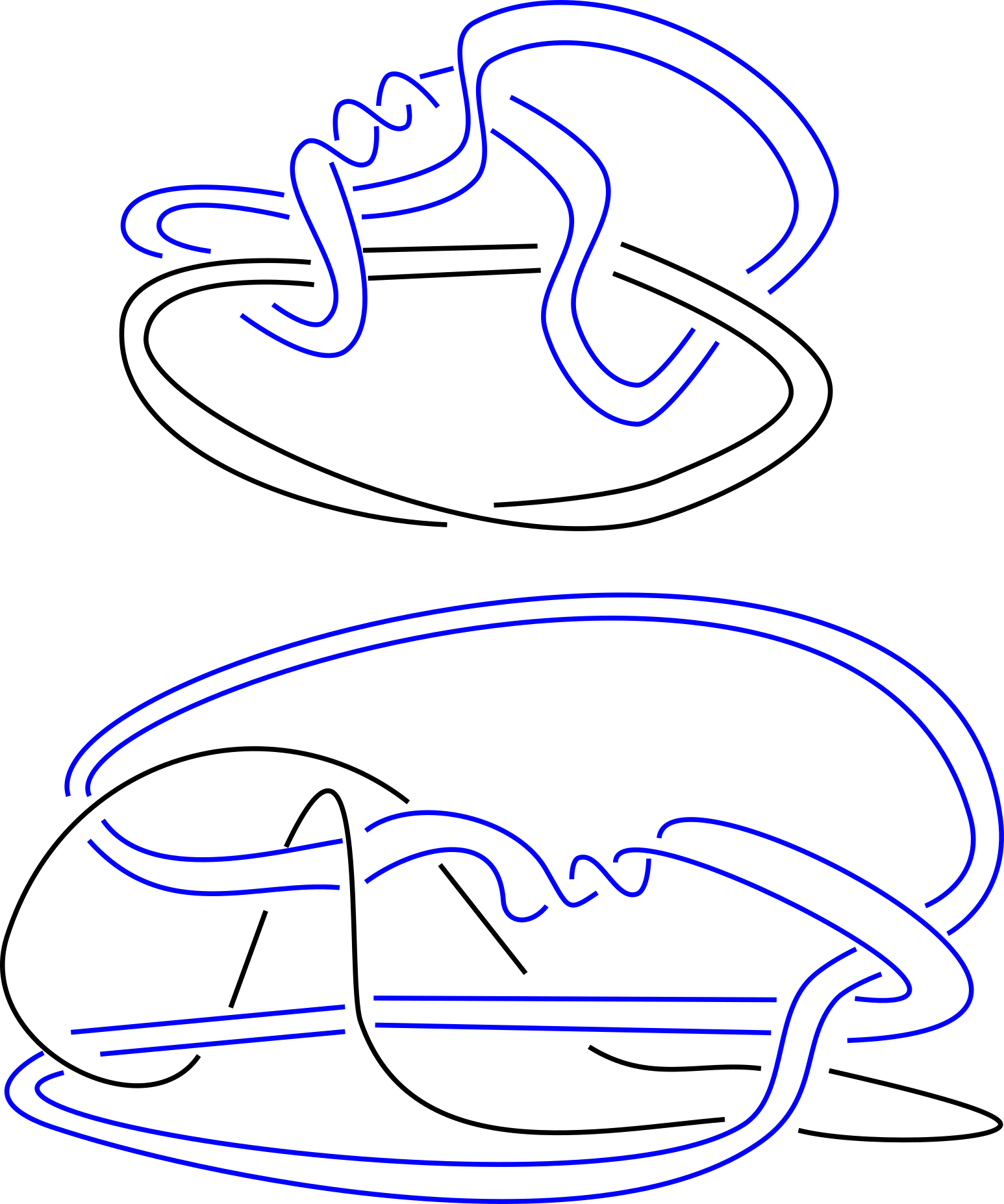}
\caption{Two morse embeddings of the same link type. By the proof of Theorem \ref{thm:IshiBound}, the top embedding can be realized as a 2SAP in a $(2\times 2)$-tube. However, the bottom embedding has trunk $10$ and cannot be realized as a 2SAP in a $(2\times 2)$-tube.}
\label{fig:1maximum}
\end{figure}
\begin{example}
    Consider the example in Figure \ref{fig:scnd}. The example has the link $L6a1$ as the pattern. It has $\sigma_1=2$, $\sigma_2=3$, and $\omega_1=2$. This example also appeared in Kodani's \cite{kodani2013new}.
     Note that the EH-trunk of this knot is at most 10.  By our theorem, the EH-trunk subject to the constraint that the unknotted component has one local maximum is at least $2+2(2\cdot 3)=14$.\label{exmpl6a1Pat}
\end{example}

\begin{figure}[ht!]
\centering
\includegraphics[width=6.5cm]{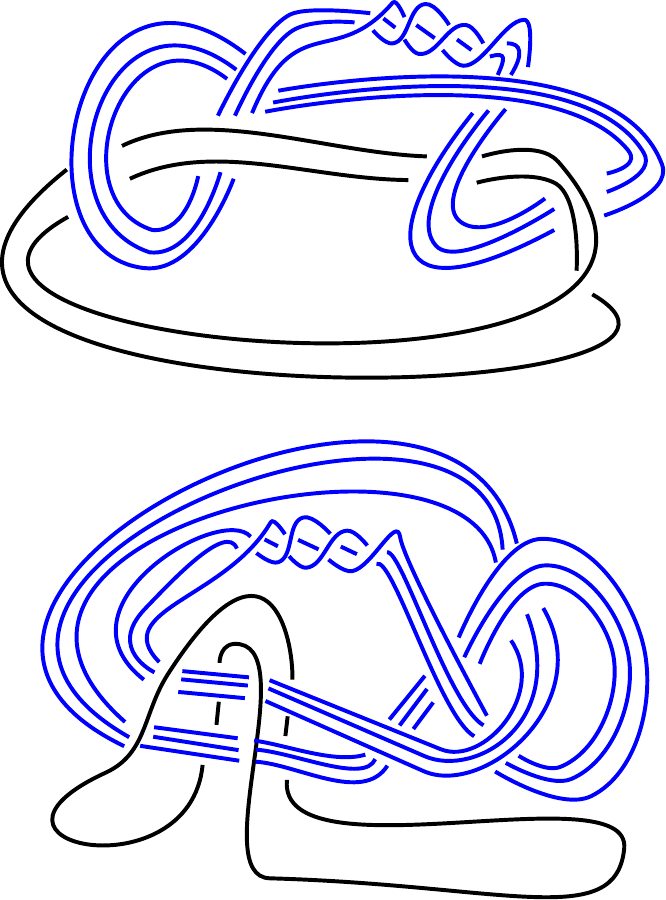}
\caption{Two morse embeddings of the same link type. By the proof of Theorem \ref{thm:IshiBound}, the top embedding can be realized as a 2SAP in a $(2\times 3)$-tube. However, the bottom embedding has trunk $14$ and cannot be realized as a 2SAP in a $(2\times 3)$-tube.}
\label{fig:scnd}
\end{figure}

The above examples have potential applications to experiments involving linked DNA translocating through a nanopore \cite{COW17, COW20, RK23}.  For example,  if the black component of the link in Figure \ref{fig:1maximum} always intersects the plane of the nanopore in at most two points, then at some point there will be at least 10 strands of the DNA trying to simultaneously translocate through the pore, as in the bottom conformation. On the other hand, without that constraint, there may only need to be 8 strands of DNA translocating simultaneously, as in the top conformation. 

\section*{Acknowledgments}

CES acknowledges the support of the Natural Sciences and Engineering Research Council of Canada (NSERC) [funding reference number RGPIN-2020-06339]. PP acknowledges the Pacific Institute for the Mathematical Sciences for its support. RB is partially supported by NSF grant DMS-2424734.

\printbibliography

\Addresses

\end{document}